\DeclareMathOperator{\Tr}{Tr}
\newtheorem{theorem}{Theorem}[section]
\newtheorem{proposition}{Proposition}[section]
\newtheorem{lemma}{Lemma}[section]
\newtheorem{corollary}{Corollary}[section]
\theoremstyle{definition}
\newtheorem{definition}{Definition}[section]
\begin{document}

\begin{center}
\vskip 1cm{\LARGE\bf{Upper Bounds for Stern's Diatomic Sequence and Related Sequences} 
\vskip 1cm
\large
Colin Defant\\
Department of Mathematics\\
University of Florida\\
United States\\
cdefant@ufl.edu}
\end{center}
\vskip .2 in

\begin{abstract} 
Let $(s_2(n))_{n=0}^\infty$ denote Stern's diatomic sequence. For $n\geq 2$, we may view $s_2(n)$ as the number of partitions of $n-1$ into powers of $2$ with each part occurring at most twice. More generally, for integers $b,n\geq 2$, let $s_b(n)$ denote the number of partitions of $n-1$ into powers of $b$ with each part occurring at most $b$ times. Using this combinatorial interpretation of the sequences $s_b(n)$, we use the transfer-matrix method to develop a means of calculating $s_b(n)$ for certain values of $n$. This then allows us to derive upper bounds for $s_b(n)$ for certain values of $n$. In the special case $b=2$, our bounds improve upon the current upper bounds for the Stern sequence. In addition, we are able to prove that $\displaystyle{\limsup_{n\rightarrow\infty}\frac{s_b(n)}{n^{\log_b\phi}}=\frac{(b^2-1)^{\log_b\phi}}{\sqrt 5}}$.
\end{abstract} 

\bigskip

\section{Introduction} 
Throughout this paper, $F_m$ and $L_m$ will denote the Fibonacci and the Lucas numbers. We have $F_{m+2}=F_{m+1}+F_m$ and $L_{m+2}=L_{m+1}+L_m$ for all integers $m$ (including negative integers). We convene to use the initial values $F_1=F_2=1$, $L_1=1$, and $L_2=3$. We also let $\phi=\dfrac{1+\sqrt 5}{2}$ and $\overline{\phi}=\dfrac{1-\sqrt5}{2}=\dfrac{-1}{\phi}$. The symbol $\mathbb N$ will denote the set of positive integers. 
\par 
Problem B1 of the 2014 William Lowell Putnam Competition defines a \emph{base $10$ over-expansion} of a positive integer $N$ to be an expression of the form \[N=d_k10^k+d_{k-1}10^{k-1}+\cdots+d_010^0\] with $d_k\neq 0$ and $d_i\in\{0,1,2,\ldots,10\}$ for all $i$. We may generalize (and slightly modify) this notion to obtain the following definition.
\begin{definition} \label{Def1}
Let $b\geq 2$ be an integer. A \emph{base $b$ over-expansion} of a positive integer $N$ is a word $d_kd_{k-1}\cdots d_0$ over the alphabet $\{0,1,\ldots,b\}$ such that $d_k\neq 0$ and $\displaystyle{\sum_{i=0}^k}d_ib^i=N$. We refer to the letter $d_i$ as the \emph{$i^{th}$ digit of the expansion}. It is well-known that each positive integer $N$ has a unique base $b$ over-expansion that does not contain the letter (or digit) $b$; we refer to this expansion as the \emph{ordinary base $b$ expansion} of $N$. 
\end{definition} 
\par
The \emph{Stern-Brocot sequence}, also known as \emph{Stern's diatomic sequence} or simply \emph{Stern's sequence}, is defined by the simple recurrence relations \[s(2n)=s(n)\hspace{.5cm}\text{and}\hspace{.5cm}s(2n+1)=s(n)+s(n+1)\] for all nonnegative integers $n$, where $s(0)=0$. This sequence has found numerous applications in number theory and combinatorics, and it has several interesting properties which relate it to the Fibonacci sequence. For $n\geq 2$, it is well-known that $s(n)$ is the number of base $2$ over-expansions (also known as hyperbinary expansions) of $n-1$ \cite{Carlitz64}. To generalize Stern's sequence, let $s_b(n)$ denote the number of base $b$ over-expansions of $n-1$. Equivalently, one may wish to think of $s_b(n)$ as the number of partitions of $n-1$ into powers of $b$ with each part occurring at most $b$ times. We convene to let $s_b(0)=0$ and $s_b(1)=1$. The sequence $s_b(n)$ satisfies the recurrence relations $s_b(bn)=s_b(n)$, $s_b(bn+1)=s_b(n)+s_b(n+1)$, and $s_b(bn+i)=s_b(n+1)$ for $i\in\{2,3,\ldots,b-1\}$. Equivalently,
\begin{equation} \label{Eq1}
s_b(n)=\begin{cases} s_b\left(\frac nb\right), & \mbox{if } b\equiv 0\pmod b; \\ s_b\left(\frac{n-1}{b}\right)+s_b\left(\frac{n-1}{b}+1\right), & \mbox{if } n\equiv 1\pmod b; \\ s_b\left(\frac{n-i}{b}+1\right), & \mbox{if } n\equiv i\pmod b\mbox{ and }2\leq i<b. \end{cases} 
\end{equation}   
Using \eqref{Eq1}, one may easily prove the following lemma. 
\begin{lemma} \label{Lem2}
Let $n$ be a positive integer. If $n\equiv 1\pmod{b^2}$, then \[s_b(n)=s_b\left(\frac{n-1}{b^2}\right)+s_b\left(\frac{n+b-1}{b}\right).\] If $n\equiv b+1\pmod{b^2}$, then \[s_b(n)=s_b\left(\frac{n-1}{b}\right)+s_b\left(\frac{n+b^2-b-1}{b^2}\right).\]
\end{lemma}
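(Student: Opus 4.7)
The plan is to derive both identities by iterating the recurrence \eqref{Eq1}.

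For the first identity, suppose $n\equiv 1\pmod{b^2}$. In particular $n\equiv 1\pmod b$, so the second branch of \eqref{Eq1} gives
\[
s_b(n)=s_b\!\left(\tfrac{n-1}{b}\right)+s_b\!\left(\tfrac{n-1}{b}+1\right).
\]
The hypothesis $n-1\equiv 0\pmod{b^2}$ tells us that $(n-1)/b$ is divisible by $b$, so the first branch of \eqref{Eq1} applies to $s_b((n-1)/b)$ and rewrites it as $s_b((n-1)/b^2)$. The remaining summand is already $s_b((n+b-1)/b)$, so adding the two pieces yields the claim.

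For the second identity, suppose $n\equiv b+1\pmod{b^2}$, and write $n=b+1+kb^2$ for some nonnegative integer $k$. Since $n\equiv 1\pmod b$, the same initial decomposition produces the two summands $s_b((n-1)/b)$ and $s_b((n+b-1)/b)$. This time I would reduce the \emph{second} summand: substituting $n=b+1+kb^2$ gives $(n+b-1)/b=2+kb\equiv 2\pmod b$. The sub-case split is whether $b\geq 3$ (where the third branch of \eqref{Eq1}, applied with $i=2$, simplifies $s_b(2+kb)$ to $s_b(k+1)$) or $b=2$ (where $2+kb$ is even and the first branch gives the same conclusion $s_b(k+1)$, since $1+kb/2=k+1$). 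A direct computation shows $k+1=(n+b^2-b-1)/b^2$, matching the stated expression, and the first summand $s_b((n-1)/b)$ is already in the desired form.

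The main obstacle here is only bookkeeping---checking that the various index manipulations really land on the formulas as written, and handling the $b=2$ degeneracy where the third branch of \eqref{Eq1} is vacuous. Conceptually, the lemma simply collapses two consecutive applications of \eqref{Eq1} into a single identity: pinning down $n$ modulo $b^2$ (rather than merely modulo $b$) is exactly what lets us perform the second reduction without further case analysis on $n$.
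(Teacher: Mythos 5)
Your proof is correct and is exactly the argument the paper intends: the paper omits the proof of Lemma \ref{Lem2}, remarking only that it follows from \eqref{Eq1}, and your two-step iteration of \eqref{Eq1} (first the $n\equiv 1\pmod b$ branch, then reducing whichever summand's argument is $\equiv 0$ or $\equiv 2\pmod b$) is the straightforward derivation being alluded to. Your handling of the $b=2$ degeneracy and the index bookkeeping both check out.
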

\par 
Calkin and Wilf \cite{Calkin09} determined that \[0.958854...=\frac{3^{\log_2\phi}}{\sqrt 5}\leq\limsup_{n\rightarrow\infty}\frac{s(n)}{n^{\log_2\phi}}\leq\frac{1+\phi}{2}=1.170820...,\] and they asked for the exact value of $\displaystyle{\limsup\frac{s(n)}{n^{\log_2\phi}}}$. Here, we give upper bounds for the values of $s_b(n)$ for any integer $b\geq 2$, from which we will deduce that \[\limsup_{n\rightarrow\infty}\frac{s_b(n)}{n^{\log_b\phi}}=\frac{(b^2-1)^{\log_b\phi}}{\sqrt 5}.\] In particular, $\displaystyle{\limsup_{n\rightarrow\infty}\frac{s(n)}{n^{\log_2\phi}}=\frac{3^{\log_2\phi}}{\sqrt 5}}$. While preparing this manuscript, the author discovered that Coons and Tyler \cite{Coons14} had already determined this value of the supremum limit of $\dfrac{s(n)}{n^{\log_2\phi}}$; in the same paper, they mention that this problem actually dates back to Berlekamp, Conway, and Guy in 1982 \cite{Berlekamp82}. However, we have no fear that our results are unoriginal because the bounds we derive apply to the more general family of sequences $s_b(n)$ and are stronger than those given in \cite{Coons14}. In addition, our methods of proof are quite different from those used in \cite{Coons14}. Coons and Tyler use clever analytic estimates to prove their results from the recursive definition of $s(n)$. By contrast, we will make heavy use of the interpretation of $s_b(n)$ as the number of base $b$ over-expansions of $n-1$ in order to prove several of our most important results. In particular, we will combine this combinatorial interpretation of $s_b(n)$ with the transfer-matrix method in order to prove Theorem \ref{Thm2}. In turn, Theorem \ref{Thm2} will allow us to determine the maximum values of $s_b(n)$ when $n$ is restricted to certain intervals. 
\section{Determining Maximum Values}   
Throughout this section, fix an integer $b\geq 2$. Our goal is to derive upper bounds for the numbers $s_b(n)$, particularly those values of $n$ that are slightly larger than a power of $b$ (we will make this precise soon). To do so, we will make use of the following sequence. 
\begin{definition} \label{Def2}
Let $h_1=h_2=1$. For $m\geq 3$, let \[h_m=1+\sum_{i=0}^{\left\lfloor\frac{m-3}{2}\right\rfloor}b^{m-2-2i}.\] 
\end{definition} 
Alternatively, we may calculate $h_m$ using the recurrence relation 
\begin{equation} \label{Eq2}  
h_m=\begin{cases} bh_{m-1}-b+1, & \mbox{if } 2\vert m; \\ bh_{m-1}+1, & \mbox{if } 2\nmid m \end{cases}
\end{equation} along with the initial value $h_1=1$. For example, $h_3=b+1$, $h_4=b^2+1$, $h_5=b^3+b+1$, and $h_6=b^4+b^2+1$. It is important to note that $h_m\equiv 1\pmod b$ for all $m\in\mathbb N$. 
We state the following lemma for easy reference, although we omit the proof because it is fairly straightforward. 
\begin{lemma} \label{Lem16} 
For any positive integer $m$, \[h_{m+1}-h_m=\frac{b^m+(-1)^mb}{b+1}.\]
\end{lemma}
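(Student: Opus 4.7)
The plan is to prove the identity by straightforward induction on $m$, driven by the two-case recurrence \eqref{Eq2}. The base case $m=1$ is immediate: $h_2-h_1 = 1-1 = 0$, and the right-hand side of the claimed formula is $\frac{b + (-1)b}{b+1} = 0$.

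For the inductive step, suppose $h_m - h_{m-1} = \frac{b^{m-1}+(-1)^{m-1}b}{b+1}$ and split into two cases according to the parity of $m+1$. If $m+1$ is even (so $m$ is odd and $m\geq 3$), the recurrence \eqref{Eq2} gives $h_{m+1} = bh_m - b + 1$ and $h_m = bh_{m-1}+1$; subtracting these yields $h_{m+1}-h_m = b(h_m-h_{m-1}) - b$. Applying the inductive hypothesis with $(-1)^{m-1}=1$ and simplifying over the common denominator $b+1$ gives $\frac{b^m+b^2 - b(b+1)}{b+1} = \frac{b^m-b}{b+1}$, which matches $\frac{b^m+(-1)^m b}{b+1}$ since $m$ is odd. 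The case $m+1$ odd (so $m$ is even, $m\geq 2$) is parallel: one finds $h_{m+1}-h_m = b(h_m-h_{m-1})+b$, and substituting the inductive hypothesis with $(-1)^{m-1}=-1$ produces $\frac{b^m-b^2+b(b+1)}{b+1} = \frac{b^m+b}{b+1}$, as required.

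An equivalent and perhaps more transparent route would be to derive closed-form expressions for $h_m$ directly from Definition \ref{Def2} by summing the geometric series: when $m$ is even one obtains $h_m = \frac{b^m-1}{b^2-1}$, and when $m$ is odd one obtains $h_m = 1 + \frac{b^m-b}{b^2-1}$. Subtracting the two consecutive values (in either order, depending on the parity of $m$) and pulling the factor $b-1$ out of the numerator to cancel against $b^2-1=(b-1)(b+1)$ recovers the stated formula.

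No real obstacle is anticipated; the computation is elementary and the only mild issue is bookkeeping the alternating sign in $(-1)^m b$, which is why the author classifies the proof as ``fairly straightforward.''
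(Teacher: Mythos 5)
Your proof is correct. The paper omits its own proof of this lemma entirely (it is stated with the remark that it is ``fairly straightforward''), so there is nothing to compare against; both your inductive argument via the recurrence \eqref{Eq2} and your alternative closed-form computation ($h_m=\frac{b^m-1}{b^2-1}$ for $m$ even, $h_m=1+\frac{b^m-b}{b^2-1}$ for $m$ odd, which one checks covers the initial values $h_1=h_2=1$ as well) are complete and check out against the parity of $(-1)^m$.
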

The following lemma lists three simple but useful observations about the numbers $s_b(n)$. We omit the proof because it follows easily from \eqref{Eq1}. 
\begin{lemma} \label{Lem1}
Let $n$ and $k$ be nonnegative integers with $b^k\leq n\leq b^{k+1}$. 
\begin{enumerate}[i.] 
\item If $n=jb^k$ for some $j\in\{1,2,\ldots,b-1\}$, then $s_b(n)=1$. 
\item If $k=1$, then $s_b(n)\leq 2$, where equality holds if and only if $n\equiv 1\pmod b$. 
\item If $k\geq 1$ and $n\not\equiv 1\pmod b$, then $s_b(n)=s_b(n')$ for some integer $n'$ with $b^{k-1}\leq n'\leq b^k$. 
\end{enumerate}
\end{lemma}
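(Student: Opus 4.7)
The plan is to prove the three parts in order, with each reduction flowing directly from the recurrence \eqref{Eq1}, which I will apply case-by-case according to the residue of $n$ modulo $b$.

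For part (i), I would induct on $k$. When $k=0$, we have $n=j$ with $1\le j\le b-1$; the case $j=1$ is the initial value $s_b(1)=1$, and for $2\le j\le b-1$ the third case of \eqref{Eq1} gives $s_b(j)=s_b(1)=1$. For the inductive step, $n=jb^k\equiv 0\pmod b$, so the first case of \eqref{Eq1} yields $s_b(n)=s_b(jb^{k-1})$, which equals $1$ by induction.

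For part (ii), I would split into three cases according to $n\bmod b$, using that $b\le n\le b^2$. If $n\equiv 0\pmod b$, write $n=bm$ with $1\le m\le b$; then $s_b(n)=s_b(m)$, and by part (i) (applied with $k=0$) together with the case $m=b$ (which reduces once more to $s_b(1)$), this is $1$. If $n\equiv i\pmod b$ for some $2\le i\le b-1$, the recurrence gives $s_b(n)=s_b((n-i)/b+1)$ where $(n-i)/b+1\in\{2,\dots,b\}$, so again $s_b(n)=1$ by part (i). Finally, if $n\equiv 1\pmod b$, write $n=bm+1$ with $1\le m\le b-1$; then $s_b(n)=s_b(m)+s_b(m+1)=1+1=2$, where both summands equal $1$ by part (i). This shows $s_b(n)\le 2$ with equality iff $n\equiv 1\pmod b$.

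For part (iii), I would again split on $n\bmod b$ (excluding the residue $1$). If $n\equiv 0\pmod b$, take $n'=n/b$; then $s_b(n)=s_b(n')$, and the inequality $b^k\le n\le b^{k+1}$ immediately gives $b^{k-1}\le n'\le b^k$. If $n\equiv i\pmod b$ with $2\le i\le b-1$, take $n'=(n-i)/b+1$, so that $s_b(n)=s_b(n')$ by \eqref{Eq1}. The main (and only) obstacle is verifying the bounds $b^{k-1}\le n'\le b^k$: the lower bound follows from $n'\ge(b^k-i)/b+1=b^{k-1}+(1-i/b)>b^{k-1}$, and the upper bound follows from $n'\le(b^{k+1}-i)/b+1=b^k+(1-i/b)<b^k+1$, which forces $n'\le b^k$ since $n'$ is an integer. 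This covers all residues not equal to $1$, completing the lemma.
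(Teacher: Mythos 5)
Your proof is correct, and it takes exactly the route the paper intends: the paper omits the proof of this lemma, remarking only that it ``follows easily from \eqref{Eq1},'' and your case analysis on $n \bmod b$ together with the small induction for part (i) is precisely that routine verification. No gaps.
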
   
\begin{proposition}\label{Prop1} 
Let $k$ and $n$ be nonnegative integers with $b^k\leq n<b^{k+1}$. We may write $n$ uniquely in the form $n=jb^k+t$, where $j\in\{1,2,\ldots,b-1\}$ and $t\in\{0,1,\ldots,b^k-1\}$. We have $s_b(n)\leq F_{k+2}$, where equality holds if and only if $t=h_k$ or $t=h_{k+1}$. 
\end{proposition}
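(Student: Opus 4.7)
My plan is to prove Proposition \ref{Prop1} by induction on $k$, but a direct induction on the bound alone will fail: when $n=jb^k+t$ with $t\equiv 1\pmod b$, the recurrence \eqref{Eq1} writes $s_b(n)$ as the sum of two $s_b$-values at arguments in $[b^{k-1},b^k]$, and twice the inductive bound $F_{k+1}$ exceeds the desired bound $F_{k+2}$. I will therefore prove the conclusion of the proposition simultaneously with the following sharper statement:
\[
\text{(b)}\quad s_b(jb^k+t)+s_b(jb^k+t+1)\le F_{k+3}\quad\text{for every }j\in\{1,\ldots,b-1\}\text{ and }0\le t<b^k,
\]
together with an explicit characterization of the $t$'s attaining equality in (b) (which, as a short calculation shows, is $\{h_k,\,h_{k+1}-1\}$ when $k$ is even and $\{h_{k+1},\,h_k-1\}$ when $k$ is odd). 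The base cases $k=0,1$ are immediate from Lemma \ref{Lem1}.

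For the inductive step, I first deduce the conclusion of Proposition \ref{Prop1} at level $k$ from (b) at level $k-1$. Writing $n=jb^k+t$ and splitting by $t\bmod b$: if $t\equiv 0\pmod b$ with $t>0$, then \eqref{Eq1} gives $s_b(n)=s_b(jb^{k-1}+t/b)\le F_{k+1}$ by the inductive hypothesis; if $t\equiv i\pmod b$ with $2\le i\le b-1$, then $s_b(n)=s_b(jb^{k-1}+(t-i)/b+1)\le F_{k+1}$; and if $t\equiv 1\pmod b$, then writing $t=bt'+1$ gives $s_b(n)=s_b(jb^{k-1}+t')+s_b(jb^{k-1}+t'+1)$, which is $\le F_{k+2}$ by the inductive (b). Since $h_k,h_{k+1}\equiv 1\pmod b$, equality in the proposition can occur only in the third case. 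There, the inductive equality characterization for (b) gives explicit values $t'\in\{h_{k-1},h_k-1\}$ or $\{h_k,h_{k-1}-1\}$ depending on parity, and using the recurrences \eqref{Eq2} these translate via $t=bt'+1$ into exactly $t\in\{h_k,h_{k+1}\}$.

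Next I deduce (b) at level $k$ from (a) at level $k$ and (a) at level $k-1$. Since $b\ge 2$, at most one of the consecutive integers $jb^k+t$, $jb^k+t+1$ is $\equiv 1\pmod b$. If neither is, both $s_b$-values equal $s_b$ at an argument in $[b^{k-1},b^k]$ by \eqref{Eq1}, each bounded by $F_{k+1}$, so the sum is at most $2F_{k+1}<F_{k+3}$. If exactly one, say $jb^k+t$, is $\equiv 1\pmod b$, then $s_b(jb^k+t)\le F_{k+2}$ by (a) just proved, while $s_b(jb^k+t+1)\le F_{k+1}$ by reduction and (a) at level $k-1$, so the sum is at most $F_{k+3}$. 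The equality set for (b) is obtained by intersecting the equality conditions from the two factors using the recurrences \eqref{Eq2}. The main obstacle is really this bookkeeping on equality, since the parity-dependent equality set of (b) at level $k-1$ is exactly what drives the equality characterization of the proposition at level $k$; the inductive loop closes only because the relations $h_k=bh_{k-1}+1$ (for $k$ odd) and $h_k=bh_{k-1}-b+1$ (for $k$ even) are perfectly matched to the map $t'\mapsto bt'+1$ appearing in the recurrence.
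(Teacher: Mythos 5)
Your argument is correct, and it is organized differently from the paper's. The paper also proceeds by induction on $k$ and isolates the same difficulty (when $t\equiv 1\pmod b$, the recurrence \eqref{Eq1} produces a sum of two terms, and $2F_{k+1}>F_{k+2}$), but instead of strengthening the induction hypothesis it unrolls the recursion one more level inside the hard case: writing $t=bt'+1$ and splitting on $t'\bmod b$, it sends one of the two summands down to level $k-2$ (bounded by $F_k$) while the other stays at level $k-1$ (bounded by $F_{k+1}$), and then runs a proof by contradiction using the equality characterizations at levels $k-1$ and $k-2$ together with the recurrence \eqref{Eq2} for $h_m$. Your simultaneous induction on the pair of statements --- the proposition itself and the bound $s_b(jb^k+t)+s_b(jb^k+t+1)\le F_{k+3}$ with its parity-dependent equality set $\{h_k,h_{k+1}-1\}$ (for $k$ even) or $\{h_{k+1},h_k-1\}$ (for $k$ odd) --- packages exactly the same combinatorics into a reusable auxiliary statement; I checked that your asserted equality sets are the correct ones and that the two halves of your induction ((a)$_k$ from (b)$_{k-1}$, then (b)$_k$ from (a)$_k$ and (a)$_{k-1}$) close up properly, with the relations $h_k=bh_{k-1}+1$ ($k$ odd) and $h_k=bh_{k-1}-b+1$ ($k$ even) playing the same role in both proofs. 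What your version buys is modularity and a cleaner statement of where the Fibonacci growth comes from (the pair $(s_b(m),s_b(m+1))$ behaves like a Fibonacci pair under the $t\mapsto bt+1$ descent); what the paper's version buys is that no auxiliary statement needs to be formulated and its equality set tracked, at the cost of a longer contradiction-based case analysis. The only loose ends in your sketch are the explicitly deferred equality bookkeeping for (b) and the degenerate boundary values (e.g., $t+1=b^k$, where the second term collapses to $s_b((j+1)b^k)=1$ by Lemma \ref{Lem1}), none of which cause problems.
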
 
\begin{proof} 
That we may write $n$ uniquely in the form $n=jb^k+t$ is trivial. The proof of the rest of the proposition is by induction on $k$. The case $k=0$ is immediate from the first part of Lemma \ref{Lem1}. The case $k=1$ follows from the second part of the same lemma.
Now, assume $k>1$. We divide the proof into three cases. 
\par 
\noindent Case 1: In this case, suppose $t=h_k$ or $t=h_{k+1}$. We will assume that $t=h_k$ and that $k$ is even; a similar argument holds if $k$ is odd or $t=h_{k+1}$. Thus, $n=jb^k+h_k$. Because $k$ is even, we may use \eqref{Eq2} to write $h_k=bh_{k-1}-b+1$ and $h_{k-1}=bh_{k-2}+1$. Furthermore, $h_k\equiv 1\pmod{b^2}$ because $k$ is even. Since $n\equiv h_k\equiv 1\pmod{b^2}$, we have by Lemma \ref{Lem2} that \[s_b(n)=s_b\left(\frac{n-1}{b^2}\right)+s_b\left(\frac{n+b-1}{b}\right)\] \[=s_b\left(jb^{k-2}+\frac{h_k-1}{b^2}\right)+s_b\left(jb^{k-1}+\frac{h_k-1}{b}+1\right)\] \[=s_b\left(jb^{k-2}+\frac{h_{k-1}-1}{b}\right)+s_b\left(jb^{k-1}+h_{k-1}\right)\] \[=s_b\left(jb^{k-2}+h_{k-2}\right)+s_b\left(jb^{k-1}+h_{k-1}\right).\] By induction on $k$, $s_b\left(jb^{k-2}+h_{k-2}\right)=F_k$ and $s_b\left(jb^{k-1}+h_{k-1}\right)=F_{k+1}$. Thus, $s_b(n)=F_k+F_{k+1}=F_{k+2}$, as desired. 
\par 
\noindent Case 2: In this case, suppose $n\not\equiv 1\pmod b$. By Lemma \ref{Lem1}, there exists an integer $n'$ with $b^{k-1}\leq n'\leq b^k$ such that $s_b(n)=s_b(n')$. If $n'=b^k$, then, using Lemma \ref{Lem1} again, $s_b(n)=1<F_{k+2}$. If $n'<b^k$, then it follows from induction on $k$ that $s_b(n)=s_b(n')\leq F_{k+1}<F_{k+2}$. 
\par 
\noindent Case 3: In this final case, suppose $n\equiv 1\pmod b$ and that $t\neq h_k$ and $t\neq h_{k+1}$. Suppose, by way of contradiction, that $s_b(n)\geq F_{k+2}$. Since $t\equiv n\equiv 1\pmod b$, we may write $t=bt'+1$ for some integer $t'<b^{k-1}$. Using \eqref{Eq1}, we have 
\begin{equation} \label{Eq25} 
s_b(n)=s_b\left(\frac{n-1}{b}\right)+s_b\left(\frac{n-1}{b}+1\right)=s_b(jb^{k-1}+t')+s_b(jb^{k-1}+t'+1).
\end{equation} 
If $t'\not\equiv 0,1\pmod b$, then we know from Lemma \ref{Lem1} and the fact that $b^{k-1}\leq jb^{k-1}+t'<jb^{k-1}+t'+1\leq b^k$ that $s_b(jb^{k-1}+t')=s_b(v_1)$ and $s_b(jb^{k-1}+t'+1)=s_b(v_2)$ for some integers $v_1,v_2\in[b^{k-2},b^{k-1}]$. By \eqref{Eq25} and induction on $k$, it follows that if $t'\not\equiv 0,1\pmod b$, then \[s_b(n)=s_b(v_1)+s_b(v_2)=\leq F_k+F_k<F_{k+2}.\] This is a contradiction, so $t'\equiv 0,1\pmod b$. We will assume that $t'\equiv 0\pmod b$; a similar argument may be used to derive a contradiction in the case $t\equiv 1\pmod b$. Write $t'=bt''$. Because $t'<b^{k-1}$ and $j\leq b-1$, $jb^{k-1}+t'+1\leq b^k$. We know that $jb^{k-1}+t'+1\neq b^k$ because $t'\equiv 0\pmod b$. Therefore, we have the inequalities $b^{k-2}\leq jb^{k-2}+t''< b^{k-1}<jb^{k-1}+t'+1<b^k$. We see by \eqref{Eq25} and induction that \[s_b(n)=s_b(jb^{k-1}+t')+s_b(jb^{k-1}+t'+1)\] \[=s_b(jb^{k-2}+t'')+s_b(jb^{k-1}+t'+1)\leq F_k+F_{k+1}=F_{k+2},\] where the equality $s_b(jb^{k-1}+t')=s_b(jb^{k-2}+t'')$ is immediate from \eqref{Eq1}. This last inequality must be an equality since we are assuming $s_b(n)\geq F_{k+2}$, so we must have $s_b(jb^{k-2}+t'')=F_k$ and $s_b(jb^{k-1}+t'+1)=F_{k+1}$. The induction hypothesis states that this is only possible if $t''\in\{h_{k-2},h_{k-1}\}$ and $t'+1\in\{h_{k-1},h_k\}$. Suppose first that $t''=h_{k-2}$ and $t'+1=h_{k-1}$. We have $h_{k-1}=bh_{k-2}+1$, so it follows from \eqref{Eq2} that $k$ must be even. Using \eqref{Eq2} again, we see that $h_k=bh_{k-1}-b+1=b(t'+1)-b+1=t$, which contradicts our assumption that $t\neq h_k$. Similarly, if $t''=h_{k-1}$ and $t'+1=h_k$, then we may derive the contradiction $t=h_{k+1}$. It is clearly impossible to have $t''=h_{k-1}$ and $t'+1=h_{k-1}$ since $t'=bt''$. Therefore, we are left to conclude that $t''=h_{k-2}$ and $t'+1=h_k$. If $k$ is even, then we may use \eqref{Eq2} to write \[bt''=t'=h_k-1=bh_{k-1}-b=b(bh_{k-2}+1)-b=b^2h_{k-2}=b^2t'',\] which is impossible. This means that $k$ must be odd, so $h_k=bh_{k-1}+1$ by \eqref{Eq2}. Since $h_k=t'+1=bt''+1=bh_{k-2}+1$, we conclude that $h_{k-1}=h_{k-2}$. It is easy to see from Definition \ref{Def2} that this is only possible if $k=3$. Hence, $t''=h_1=1$, $t'=h_3-1=b$, and $t=bt'+1=b^2+1$. However, this means that $t=h_4=h_{k+1}$, which is our final contradiction because we assumed $t\not\in\{h_k,h_{k+1}\}$.     
\end{proof} 
Now that we know the maximum values of $s_b(n)$ for $b^k\leq n<b^{k+1}$, we may easily derive the following result. 
\begin{corollary} \label{Cor1}
We have \[\limsup_{n\rightarrow\infty}\frac{s_b(n)}{n^{\log_b\phi}}\geq\frac{(b^2-1)^{\log_b\phi}}{\sqrt 5}.\]
\end{corollary}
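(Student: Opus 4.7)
The plan is to exhibit an explicit subsequence $(n_k)_{k\geq 1}$ tending to infinity along which the ratio $\frac{s_b(n_k)}{n_k^{\log_b\phi}}$ converges to $\frac{(b^2-1)^{\log_b\phi}}{\sqrt 5}$; since the $\limsup$ is at least the limit along any such subsequence, this will establish the inequality. The natural candidate supplied by Proposition \ref{Prop1} is the equality case at the left endpoint $j=1$, namely
\[
n_k=b^k+h_k.
\]
Since $b^k\leq n_k<b^{k+1}$, Proposition \ref{Prop1} gives the exact value $s_b(n_k)=F_{k+2}$ for every $k\geq 1$.

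Next, I would pin down the asymptotics of $h_k$. Either reading off the closed form in Definition \ref{Def2} or telescoping the differences in Lemma \ref{Lem16} yields
\[
h_k=\frac{b^k}{b^2-1}+O(1)\qquad\text{as } k\to\infty,
\]
so that
\[
n_k=b^k+h_k=\frac{b^{k+2}}{b^2-1}+O(1).
\]
Raising to the power $\log_b\phi$ and using $b^{k\log_b\phi}=\phi^k$, continuity of $x\mapsto x^{\log_b\phi}$ absorbs the additive $O(1)$ correction and produces
\[
n_k^{\log_b\phi}=\frac{\phi^{k+2}}{(b^2-1)^{\log_b\phi}}\bigl(1+o(1)\bigr).
\]

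Finally, Binet's formula gives $F_{k+2}=\frac{\phi^{k+2}-\overline{\phi}^{\,k+2}}{\sqrt 5}\sim \frac{\phi^{k+2}}{\sqrt 5}$, and combining this with the previous estimate yields
\[
\frac{s_b(n_k)}{n_k^{\log_b\phi}}=\frac{F_{k+2}}{n_k^{\log_b\phi}}\;\longrightarrow\;\frac{(b^2-1)^{\log_b\phi}}{\sqrt 5},
\]
which, together with $n_k\to\infty$, is exactly the lower bound on the $\limsup$ asserted by the corollary.

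There is really no substantive obstacle in this argument: Proposition \ref{Prop1} has already done the combinatorial heavy lifting by identifying the subsequence on which $s_b$ attains its block maximum $F_{k+2}$, and what remains is a routine asymptotic computation. The only mild annoyance is the parity-dependent lower-order term in the exact formula for $h_k$, but since one only needs the leading order $\frac{b^k}{b^2-1}$ to compute the limit, this can be brushed aside with an $O(1)$ estimate.
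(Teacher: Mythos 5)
Your proposal is correct and follows essentially the same route as the paper: both take the sequence $n_k=b^k+h_k$, apply the equality case of Proposition \ref{Prop1} to get $s_b(n_k)=F_{k+2}$, and then combine Binet's formula with the asymptotic $h_k=\frac{b^k}{b^2-1}+O(1)$ to compute the limit of the ratio. The paper merely tracks the $O(1)$ term explicitly (as $i_k=(b^2-1)h_k-b^k\in\{-1,\,b^2-b-1\}$) rather than absorbing it into an error term, which is a cosmetic difference.
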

\begin{proof} 
We will need Binet's formula for the Fibonacci numbers, which states that $F_m=\dfrac{\phi^m-(-\phi)^{-m}}{\sqrt5}$ for all $m\in\mathbb Z$. For each positive integer $k$, let $u_k=b^k+h_k$. Let $i_k=(b^2-1)h_k-b^k$, and observe that \[i_k=\begin{cases} -1, & \mbox{if } 2\vert k; \\ b^2-b-1, & \mbox{if } 2\nmid k. \end{cases}\]  We have \[\lim_{k\rightarrow\infty}\frac{\phi^{k+2}-(-\phi)^{-(k+2)}}{(b^{k+2}+i_k)^{\log_b\phi}}=\lim_{k\rightarrow\infty}\frac{1-\phi^{-(k+2)}(-\phi)^{-(k+2)}}{\phi^{-(k+2)}(b^{k+2}+i_k)^{\log_b\phi}}\]\[=\lim_{k\rightarrow\infty}\frac{1-\phi^{-(k+2)}(-\phi)^{-(k+2)}}{(1+i_k/b^{k+2})^{\log_b\phi}}=1.\] By Proposition \ref{Prop1} and Binet's formula, \[\frac{s_b(u_k)}{u_k^{\log_b\phi}}=\frac{F_{k+2}}{(b^k+h_k)^{\log_b\phi}}=\frac{\phi^{k+2}-(-\phi)^{-(k+2)}}{(b^k+h_k)^{\log_b\phi}\sqrt 5}=\frac{\phi^{k+2}-(-\phi)^{-(k+2)}}{(b^k+\frac{b^k+i_k}{b^2-1})^{\log_b\phi}\sqrt 5}\] \[=\frac{(b^2-1)^{\log_b\phi}}{\sqrt 5}\frac{\phi^{k+2}-(-\phi)^{-(k+2)}}{(b^{k+2}+i_k)^{\log_b\phi}},\] so \[\limsup_{n\rightarrow\infty}\frac{s_b(n)}{n^{\log_b\phi}}\geq\lim_{k\rightarrow\infty}\frac{s_b(u_k)}{u_k^{\log_b\phi}}=\frac{(b^2-1)^{\log_b\phi}}{\sqrt 5}.\]
\end{proof}  
We now wish to show that $\displaystyle{\limsup_{n\rightarrow\infty}\frac{s_b(n)}{n^{\log_b\phi}}\leq\frac{(b^2-1)^{\log_b\phi}}{\sqrt 5}}$. If $b^k+h_k<n<b^{k+1}$ for some positive integer $k$, then we know from Proposition \ref{Prop1} that \[\frac{s_b(n)}{n^{\log_b\phi}}\leq\frac{F_{k+2}}{n^{\log_b\phi}}=\frac{s_b(b^k+h_k)}{n^{\log_b\phi}}<\frac{s_b(b^k+h_k)}{(b^k+h_k)^{\log_b\phi}}.\] We saw in the proof of Corollary \ref{Cor1} that \[\lim_{k\rightarrow\infty}\frac{s_b(b^k+h_k)}{(b^k+h_k)^{\log_b\phi}}=\frac{(b^2-1)^{\log_b\phi}}{\sqrt 5}.\] Hence, we need only find a sufficiently strong upper bound for $s_b(n)$ when $b^k\leq n\leq b^k+h_k$ for some positive integer $k$. For this purpose, we make the following definitions. 
\begin{definition} \label{Def3}  
For nonnegative integers $k,r,y$, let \[I(k,r,y)=\left\{n\in\mathbb N\colon b^k<n\leq b^k+\sum_{i=0}^yb^{r-2i}\right\},\] \[\mu(k,r,y)=\max\{s_b(n)\colon n\in I(k,r,y)\},\] and \[\nu(k,r,y)=\min\{n\in I(k,r,y)\colon s_b(n)=\mu(k,r,y)\}.\] 
\end{definition} 
Our goal is to calculate $\mu(k,r,y)$ for any given nonnegative integers $k,r,y$ that satisfy $2y<r<k-1$. This will allow us to to derive tight upper bounds for all integers $n$ that satisfy $b^k\leq n\leq b^k+h_k$ for some $k$ by choosing appropriate values of $r$ and $y$. One might think that a simple inductive argument based on the recurrence relation \eqref{Eq1} should be able to derive our upper bounds quite effortlessly. However, the author has found that attempts to prove upper bounds for $s_b(n)$ using induction often fail or become incredibly convoluted. Indeed, the reader may wish to look at \cite{Coons14} in order to appreciate the surprising amount of ingenuity that Coons and Tyler need for the derivation of their upper bounds (which are weaker than ours) in the specific case $b=2$. Therefore, we shall prove a sequence of lemmas in order to develop more combinatorial means of calculating $\mu(k,r,y)$ and $\nu(k,r,y)$. 
\begin{lemma} \label{Lem3}
Let $a_ta_{t-1}\cdots a_0$ be the ordinary base $b$ expansion of a positive integer $n$. Let $c_\ell c_{\ell-1}\cdots c_0$ be a base $b$ over-expansion of $n$. Set $c_i=0$ for all integers $i$ with $\ell<i\leq t$. For any $m\in\{0,1,\ldots,t\}$, \[c_m\in\begin{cases} \{0,b-1,b\}, & \mbox{if } a_m=0; \\ \{0,1,b\}, & \mbox{if } a_m=1; \\ \{a_m-1,a_m\}, & \mbox{otherwise} . \end{cases}\]
\end{lemma}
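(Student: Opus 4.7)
The plan is to pin down the allowed values of $c_m$ by comparing the two partial sums
\[ A_m = \sum_{i=0}^m a_i b^i \quad\text{and}\quad C_m = \sum_{i=0}^m c_i b^i, \]
with the convention $A_{-1}=C_{-1}=0$. Since both expansions represent $n$, reducing modulo $b^{m+1}$ gives $C_m \equiv A_m \equiv n \pmod{b^{m+1}}$, so $C_m - A_m$ is an integer multiple of $b^{m+1}$.

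Next I would bound this multiple. From $a_i \le b-1$ one gets $0 \le A_m < b^{m+1}$, and from $c_i \le b$ one gets $0 \le C_m \le b(b^{m+1}-1)/(b-1)$. A short check (worst case $b=2$) shows these ranges force
\[ -b^{m+1} < C_m - A_m < 2b^{m+1}, \]
so the only multiples of $b^{m+1}$ available are $0$ and $b^{m+1}$, i.e.\ $C_m - A_m \in \{0, b^{m+1}\}$. The same reasoning for $m-1$ gives $C_{m-1}-A_{m-1}\in\{0,b^m\}$, and both statements remain valid at the boundary $m=0$ under the convention above.

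Now combine these using the telescoping identity
\[ (c_m - a_m) b^m = (C_m - C_{m-1}) - (A_m - A_{m-1}) = (C_m - A_m) - (C_{m-1} - A_{m-1}). \]
The right-hand side has only four possible values, namely $0$, $b^{m+1}$, $-b^m$, and $b^{m+1}-b^m$, yielding $c_m - a_m \in \{0,\, b,\, -1,\, b-1\}$. Imposing the digit constraint $c_m \in \{0,1,\ldots,b\}$ then eliminates infeasible combinations: the shift $+b$ forces $a_m=0$; the shift $-1$ forces $a_m\ge 1$; the shift $+(b-1)$ forces $a_m\in\{0,1\}$. Collating the surviving possibilities according to the value of $a_m$ recovers exactly the three sets asserted.

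I do not expect any substantial obstacle here. The only genuine content is observing that $C_m$ is rigidly constrained by a $b$-adic congruence plus a size bound that is tightest (and, at $b=2$, tight to within one unit) for all $m$; once this is in hand, the lemma reduces to a four-case check and the rest is bookkeeping.
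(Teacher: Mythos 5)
Your proof is correct: the size bound $C_m<2b^{m+1}$ reduces to $b^{m+1}(b-2)+b>0$, which holds for every $b\geq 2$, and the final four-way check on the shifts $c_m-a_m\in\{0,b,-1,b-1\}$ recovers exactly the three sets in the statement. The paper's proof uses the same two raw ingredients---the congruence $A_m\equiv C_m\pmod{b^{m+1}}$ between partial sums and the digit bounds $a_i\leq b-1$, $c_i\leq b$---but packages them differently: it first shows $A_j\leq C_j$ for all $j$, then proves $C_m<(c_m+2)b^m$ to obtain $c_m\geq a_m-1$, and finally treats the case $c_m>a_m$ separately, extracting $c_m-a_m>b-2$ to force $a_m\in\{0,1\}$. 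Your version instead isolates the sharper intermediate fact $C_m-A_m\in\{0,b^{m+1}\}$ (in effect, the carry out of position $m$) and telescopes, so all admissible shifts drop out in a single step and the ``borrow/carry'' structure of over-expansions becomes explicit; the only price is the boundary convention $C_{-1}=A_{-1}=0$, which you handle correctly. Either route is fine; yours is arguably the cleaner decomposition, while the paper's inequalities are closer to what it reuses verbatim in Lemma~\ref{Lem4}.
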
 
\begin{proof} 
We know that $\displaystyle{\sum_{i=0}^ta_ib^i=\sum_{i=0}^t c_ib^i}$ because $a_ta_{t-1}\cdots a_0$ and $c_\ell c_{\ell-1}\cdots c_0$ are base $b$ over-expansions of the same number. Therefore, for any $j\in\{0,1,\ldots,t\}$, \begin{equation}\label{Eq4} 
\sum_{i=0}^ja_ib^i\equiv\sum_{i=0}^jc_ib^i\pmod{b^{j+1}}. 
\end{equation} 
If $\displaystyle{\sum_{i=0}^ja_ib^i>\sum_{i=0}^jc_ib^i}$ for some $j\in\{0,1,\ldots,t\}$, then it follows from \eqref{Eq4} and the fact that $a_i\in\{0,1,\ldots,b-1\}$ for all $i$ that \[\sum_{i=0}^jc_ib^i\leq-b^{j+1}+\sum_{i=0}^ja_ib^i\leq-b^{j+1}+\sum_{i=0}^j(b-1)b^i=-1,\] which is impossible. Hence, $\displaystyle{\sum_{i=0}^ja_ib^i\leq\sum_{i=0}^jc_ib^i}$ for all $j\in\{0,1,\ldots,t\}$. Choose some $m\in\{0,1,\ldots,t\}$. Since $c_i\leq b$ for all $i$, we have \begin{equation}\label{Eq5} \sum_{i=0}^mc_ib^i\leq c_mb^m+\sum_{i=0}^{m-1}b\cdot b^i=(c_m+1)b^m+\sum_{i=0}^{m-2}b^{i+1}<(c_m+2)b^m. 
\end{equation} It follows that \[a_mb^m\leq\sum_{i=0}^ma_ib^i\leq\sum_{i=0}^mc_ib^i<(c_m+2)b^m,\] so $c_m\geq a_m-1$. To complete the proof, we simply need to show that if $c_m>a_m$, then either $a_m=0$ and $b_m\in\{b-1,b\}$ or $a_m=1$ and $c_m=b$.
Suppose $c_m>a_m$. We have \[\sum_{i=0}^ma_ib^i=a_mb^m+\sum_{i=0}^{m-1}a_ib^i\leq a_mb^m+\sum_{i=0}^{m-1}c_ib^i<\sum_{i=0}^mc_ib^i\] and \[\sum_{i=0}^ma_ib^i\equiv\sum_{i=0}^mc_ib^i\pmod{b^{m+1}}\] (by \eqref{Eq4}), so we know from \eqref{Eq5} that \[\sum_{i=0}^ma_ib^i\leq -b^{m+1}+\sum_{i=0}^mc_ib^i<-b^{m+1}+(c_m+2)b^m.\] This implies that $a_mb^m<-b^{m+1}+c_mb^m+2b^m$, so $c_m-a_m>b-2$. This is impossible if $a_m\not\in\{0,1\}$ because $c_m\leq b$. If $a_m=0$, we must have either $c_m=b-1$ or $c_m=b$, and if $a_m=1$, then we must have $c_m=b$. 
\end{proof} 
\begin{lemma} \label{Lem4} 
Let $k,r,y$ be nonnegative integers with $2y<r<k-1$. If $a_ta_{t-1}\cdots a_0$ is the ordinary base $b$ expansion of $\nu(k,r,y)-1$, then $t=k$, $a_k=1$, $a_j=a_0=0$ for all $j\in\{r+1,\ldots,k-1\}$, and $a_i\in\{0,1\}$ for all $i\in\{0,1,\ldots,k\}$.  
\end{lemma}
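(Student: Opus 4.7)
The plan is to split the conclusions into two blocks. The first (that $t = k$, $a_k = 1$, and $a_j = 0$ for $j \in \{r+1, \ldots, k-1\}$) is a size estimate; the second ($a_0 = 0$ and $a_i \in \{0,1\}$ for $1 \leq i \leq r$) is a single injection on base-$b$ over-expansions that uses Lemma \ref{Lem3}. For the size estimate, I bound the geometric sum as $\sum_{i=0}^y b^{r-2i} \leq b^r \cdot \frac{b^2}{b^2-1} < b^{r+1}$; combined with $r < k - 1$, this gives $b^k < \nu(k,r,y) < b^k + b^{r+1} \leq 2b^k$. Hence the ordinary base-$b$ expansion of $\nu(k,r,y) - 1$ has exactly $k+1$ digits, so $t = k$ and $a_k = 1$, and the inequality $\nu(k,r,y) - 1 - b^k < b^{r+1}$ kills the digits $a_{r+1}, \ldots, a_{k-1}$.

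For the injection block, write $\nu = \nu(k,r,y)$ and $N = \nu - 1$, and argue by contradiction: suppose either $a_0 \geq 1$, or $a_i \geq 2$ for some $i \in \{1, \ldots, r\}$, and let $i^*$ be the smallest index where this happens. Set $\nu' = \nu - b^{i^*}$; the plan is to show $\nu' \in I(k,r,y)$ and $s_b(\nu') \geq s_b(\nu) = \mu(k,r,y)$, contradicting the minimality defining $\nu$. The key point is that in any over-expansion $c_k c_{k-1} \cdots c_0$ of $N$, the digit $c_{i^*}$ is at least $1$: if $i^* = 0$ and $a_0 \geq 1$, then $c_0 \equiv N \equiv a_0 \pmod{b}$ together with $0 \leq c_0 \leq b$ force $c_0 = a_0 \geq 1$; if $i^* \geq 1$ and $a_{i^*} \geq 2$, Lemma \ref{Lem3} gives $c_{i^*} \in \{a_{i^*}-1, a_{i^*}\}$, hence $c_{i^*} \geq 1$. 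The map $c \mapsto c'$ that decrements $c_{i^*}$ by $1$ and leaves all other digits untouched therefore sends over-expansions of $N$ injectively into over-expansions of $N - b^{i^*} = \nu' - 1$; the leading digit $c_k$ is preserved because $i^* \leq r < k$. So $s_b(\nu') \geq s_b(\nu)$.

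Finally, $\nu' \in I(k,r,y)$: clearly $\nu' < \nu \leq b^k + \sum_{i=0}^y b^{r-2i}$, and the lower bound $\nu' > b^k$ follows from $\nu - 1 \geq b^k + 1$ (in the case $i^* = 0$, $a_0 \geq 1$) or $\nu - 1 \geq b^k + 2b^{i^*}$ (in the case $i^* \geq 1$, $a_{i^*} \geq 2$). The only technical subtlety is in the $i^* = 0$, $a_0 = 1$ case, where Lemma \ref{Lem3} by itself only yields the weaker constraint $c_0 \in \{0, 1, b\}$, and one must bring in the divisibility $c_0 \equiv a_0 \pmod{b}$ to conclude $c_0 = 1$; everything else is routine.
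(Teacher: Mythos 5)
Your proof is correct. The first block (the size estimate giving $t=k$, $a_k=1$, and the vanishing of $a_{r+1},\ldots,a_{k-1}$) is the same as the paper's, which simply reads these facts off from $\nu(k,r,y)\in I(k,r,y)$. For the second block the paper splits into two separate arguments: for a digit $a_m\in\{2,\ldots,b-1\}$ it passes to $\nu'=\nu-(a_m-1)b^m$ and builds the injection $c_m\mapsto c_m-a_m+1$ via Lemma \ref{Lem3}, very much in the spirit of your decrement map; but for $a_0=1$ it abandons the injection entirely and instead uses the recurrence \eqref{Eq1} (since $\nu\equiv 2\pmod b$, one gets $s_b(\nu)=s_b\left(\frac{\nu-2}{b}+1\right)<s_b(\nu-1)$ with $\nu-1\in I(k,r,y)$, contradicting maximality). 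Your unified treatment --- decrementing the single digit $c_{i^*}$ by $1$ and observing that the congruence $c_0\equiv a_0\pmod b$ pins $c_0=a_0$ whenever $a_0\geq 1$ --- handles both failures at once and is, if anything, cleaner; the congruence observation you need goes slightly beyond the literal statement of Lemma \ref{Lem3} but is immediate. What each approach buys: the paper's recurrence step for $a_0$ is shorter once \eqref{Eq1} is in hand, while yours avoids case-splitting and stays entirely within the combinatorial over-expansion picture. The only points worth spelling out are that the decremented word remains a legitimate over-expansion (no leading-digit issue arises, since $i^*\leq r<k-1$ lies strictly below the leading position of any over-expansion of a number at least $b^k$) and that injectivity survives the stripping of any leading zeros; both are routine, as you indicate.
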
 
\begin{proof} 
Let $\nu=\nu(k,r,y)$, and let $a_ta_{t-1}\cdots a_0$ be the ordinary base $b$ expansion of $\nu-1$. It follows from the fact that $\nu\in I(k,r,y)$ that $t=k$, $a_k=1$, and $a_j=0$ for all $j\in\{r+1,\ldots,k-1\}$. We still need to show that $a_0=0$ and $a_i\in\{0,1\}$ for all $i\in\{0,1,\ldots,k\}$. Suppose, for the sake of finding a contradiction, that $a_m\in\{2,3,\ldots,b-1\}$ for some $m\in\{0,1,\ldots,k\}$. Because $a_k=1$, we know that $m\in\{0,1,\ldots,k-1\}$. Let $\nu'=\nu-(a_m-1)b^m$. The ordinary base $b$ expansion of $\nu'-1$ is simply the word obtained from $a_ka_{k-1}\cdots a_0$ by replacing the $m^{th}$ digit (which is $a_m$) with $1$. Observe that $\nu'<\nu$ and $\nu'\in I(k,r,y)$ since \[b^k<b^k+b^m=b^k+a_mb^m-(a_m-1)b^m\leq\nu-(a_m-1)b^m=\nu'.\] Hence, by the definition of $\nu$ in Definition \ref{Def3}, $s_b(\nu')<s_b(\nu)$. Choose some base $b$ over-expansion $c_\ell c_{\ell-1}\cdots c_0$ of $\nu-1$, and let $f(c_\ell c_{\ell-1}\cdots c_0)$ be the word obtained from $c_\ell c_{\ell-1}\cdots c_0$ by replacing the $m^{th}$ digit (which is $c_m$) with $c_m-a_m+1$. Lemma \ref{Lem3} implies that $c_m-a_m+1\in\{0,1\}$ because $c_m\in\{a_m-1,a_m\}$. Therefore, $f(c_\ell c_{\ell-1}\cdots c_0)$ is a base $b$ over-expansion of $\nu'-1$. We see that $f$ is an injection from the set of base $b$ over-expansions of $\nu-1$ to the set of base $b$ over-expansions of $\nu'-1$. This contradicts the fact that $s_b(\nu')<s_b(\nu)$, so we conclude that $a_i\in\{0,1\}$ for all $i\in\{0,1,\ldots,k\}$. 
\par 
We are left with the task of showing that $a_0=0$. Suppose $a_0\neq 0$. By the preceding paragraph, we must have $a_0=1$. This means that $\nu\equiv 2\pmod b$, so \[s_b(\nu)=s_b\left(\frac{\nu-2}{b}+1\right)<s_b\left(\frac{\nu-2}{b}\right)+s_b\left(\frac{\nu-2}{b}+1\right)=s_b(\nu-1)\] by \eqref{Eq1}. Since $\nu\equiv 2\pmod b$ and $\nu>b^k$ by definition, we know that $\nu-1>b^k$. Therefore, $\nu-1\in I(k,r,y)$ and $s_b(\nu)<s_b(\nu-1)$, which contradicts the definition of $\nu$. It follows from this contradiction that $a_0=0$. 
\end{proof}
Lemma \ref{Lem4} hints that it is of interest to enumerate the base $b$ over-expansions of positive integers whose ordinary base $b$ expansions use only the digits $0$ and $1$. Let $e_0,e_1,\ldots,e_{\ell}$ be nonnegative integers with $e_0<e_1<\cdots<e_\ell$, and let $n=b^{e_0}+b^{e_1}+\cdots+b^{e_\ell}$. Let $Q_i$ denote the operation that changes one base $b$ over-expansion of $n$ into another by increasing the $i^{th}$ digit of an expansion by $b$ and decreasing the $(i+1)^{th}$ digit of the expansion by $1$. The operation $Q_i$ can only be used if the $i^{th}$ digit of the expansion under consideration is a $0$. If we use the operation $Q_i$ when the $(i+1)^{th}$ digit of the expansion is a $0$, then this digit is immediately converted to a $b-1$ and the $(i+2)^{th}$ digit is reduced by $1$. If the $(i+2)^{th}$ digit is also $0$, then it is immediately converted to a $b-1$ and the $(i+3)^{th}$ digit is reduced by $1$. This process continues until a nonzero digit is reduced by $1$. After using the operation $Q_i$, any leading $0$'s are erased from the expansion. The transformation of $0$'s to $(b-1)$'s is analogous to the transformation of $0$'s to $9$'s that occurs when the number $1$ is subtracted from the number $10000$, resulting in $9999$. As an example, only the operations $Q_2,Q_4,Q_5,Q_6$, and $Q_8$ may be used to transform the expansion $1010001011$ into a new expansion. The operation $Q_4$ changes the expansion $1010001011$ into the expansion $100(b-1)(b-1)b1011$. The operation $Q_8$ first changes the expansion $1010001011$ into $0b10001011$; the leading $0$ is then erased, yielding $b10001011$.  
\par  
We will transform the ordinary base $b$ expansion of $n$ into a base $b$ over-expansion $c_tc_{t-1}\cdots c_0$ of $n$ by repeated use of the operations just described. At each step, we will choose the value of $c_{e_i}$ for some $i\in\{0,1,\ldots,\ell\}$ while simultaneously deciding the values of $c_j$ for all $j\in\{e_{i-1}+1,e_{i-1}+2,\ldots,e_i-1\}$ (or all $j<e_i$ in the case $i=0$). We proceed by permanently deciding the values of $c_0,c_1,\ldots,c_{e_0}$, then permanently deciding the values of $c_{e_0+1},c_{e_0+2},\ldots,c_{e_1}$, and so on until we decide the values of $c_{e_{\ell-1}+1},c_{e_{\ell-1}+2},\ldots,c_{e_\ell}$. We omit $c_{e_\ell}$ from the word $c_tc_{t-1}\cdots c_0$ if $c_{e_\ell}=0$; in this case, $t=e_\ell-1$. For the sake of providing a concrete example of this process, we will suppose that $\{e_0,e_1,\ldots,e_\ell\}=\{2,4,5,9,11\}$ and $b=7$. Here, the ordinary base $7$ expansion of $n$ is $101000110100$.
Since $e_0=2$, we first choose the value of $c_2$ (while simultaneously deciding the values of $c_0$ and $c_1$). There is only one way to set $c_2=1$; namely, we keep the same expansion $101000110100$. If we want to have $c_2=0$, then we could either perform the operation $Q_0$ to get the expansion $101000110067$ or perform the operation $Q_1$ to get the expansion $101000110070$. Similarly, if we want to have $c_2=7$, then we could either perform the operations $Q_0$ and then $Q_2$ to obtain $101000106767$ or perform the operations $Q_1$ and then $Q_2$ to get $101000106770$. That is, there are $e_0=2$ ways to set $c_2=0$ and $e_0=2$ ways to set $c_2=7$. By Lemma \ref{Lem3}, $0,1,$ and $7$ are the only possible values of $c_2$. For this example, we will assume we chose to use the operations $Q_1$ and $Q_2$ to get the expansion $101000106770$. Next, we choose the value of $c_4$ because $e_1=4$. Because we used the operation $Q_2$, the $4^{th}$ digit of the expansion was temporarily converted into a $0$. This means that there is no way to set $c_4=1$ using the operations $Q_i$. There is one way to set $c_4=0$; just keep the expansion $101000106770$. Similarly, there is one way to set $c_4=7$; just use the operation $Q_4$ to obtain $101000076770$. We will assume that we make the former choice and keep the expansion $101000106770$. Next, we choose the value of $c_5$ since $e_2=5$. The only way to set $c_5=1$ is to keep the expansion the same. Since we have already determined $c_0,c_1,c_2,c_3,c_4$, we cannot perform any of the operations $Q_0,Q_1,Q_2,Q_3,Q_4$ in order to temporarily reduce the $5^{th}$ digit of the expansion by $1$. In other words, $c_5$ is stuck with the value $1$. We now have the expansion $101000106770$, and we wish to choose the value of $c_9$ (since $e_3=9$) while simultaneously determining the values of $c_6,c_7,c_8$. Again, the only way to make $c_9=1$ is to keep the expansion $101000106770$. To make $c_9=0$, we could use $Q_6$ to get $100667106770$, use $Q_7$ to get $100670106770$, or use $Q_8$ to get $100700106770$. To set $c_9=7$, we could use $Q_6$ and then $Q_9$ to obtain $67667106770$, use $Q_7$ and then $Q_9$ to get $67670106770$, or use $Q_8$ and then $Q_9$ to get $67700106770$. Hence, there are $e_3-e_2-1=3$ ways to set $c_9=0$ and $e_3-e_2-1=3$ ways to set $c_9=7$. We will assume that we choose to use $Q_7$ (and not $Q_9$) to obtain the expansion $100670106770$. All that is left to do is decide the value of $c_{11}$ (while simultaneously determining $c_{10}$). We cannot set $c_{11}=7$ because there is no nonzero digit to the left of the $11^{th}$ digit from which to ``borrow." Alternatively, one could observe that if $c_{11}=7$, then $c_{11}c_{10}\cdots c_0$ would be a base $7$ over-expansion for a number strictly larger than $n$. We can, however, keep the expansion      
$100670106770$ if we wish to set $c_{11}=1$. There is one way (because $e_4-e_3-1=1$) to set $c_{11}=0$; simply use the operation $Q_{10}$ to obtain the expansion $70670106770$. 
\begin{figure}  
\includegraphics[height=68mm]{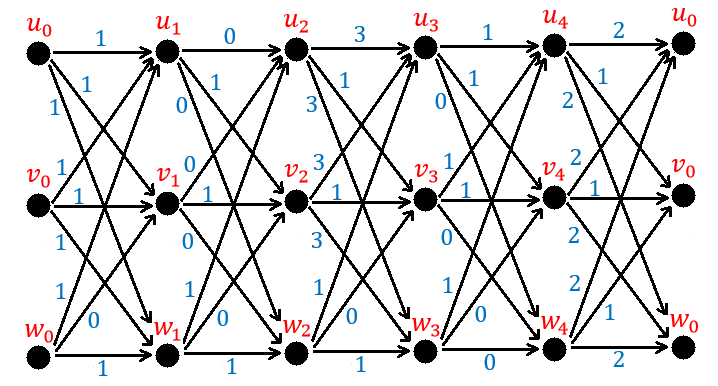}
\caption{The edge-weighted digraph $G$ for the given example. Note that each of the vertices $u_0,v_0,w_0$ is drawn twice.} \label{Fig1}
\end{figure} 
\par
Figure \ref{Fig1} depicts an edge-weighted digraph $G$ which encodes all the possible choices that we could have made in this example. The graph has only fifteen vertices, but we have drawn each of the vertices $u_0,v_0,w_0$ twice in order to improve the aesthetics of the image. The vertex $u_i$ corresponds to choosing to set $c_{e_i}=0$. The vertex $v_i$ corresponds to setting $c_{e_i}=1$. The vertex $w_i$ corresponds to setting $c_{e_i}=7$. The weights of the edges correspond to the number of choices possible. For example, if we have chosen to let $c_5=0$ (corresponding to the vertex $u_2$), then there are three ways to set $c_9=7$ (corresponding to the vertex $w_3$). Thus, there is an edge of weight $3$ from $u_2$ to $w_3$. After setting $c_9=7$, it is impossible to set $c_{11}=1$, so there is an edge of weight $0$ from $w_3$ to $v_4$. The reader might ask why there are edges from $u_4,v_4,w_4$ to $u_0,v_0,w_0$. We include these edges because we wish to interpret base $7$ over-expansion of $n$ in terms of closed walks in the graph $G$. There are edges of weight $2$ from $u_4,v_4,w_4$ to $u_0$ because there are two ways to set $c_2=0$, regardless of the value of $c_{11}$ (recall that we choose $c_2$ before choosing $c_{11}$). Similarly, there are edges of weight $1$ from $u_4,v_4,w_4$ to $v_0$ and edges of weight $2$ from $u_4,v_4,w_4$ to $w_0$. 
\par 
Suppose we want to construct a base $7$ over-expansion of $n$ in which $c_2=0$, $c_4=1$, $c_5=1$, $c_9=7$, and $c_{11}=0$. This choice of the values of $c_{e_i}$ for $i\in\{0,1,2,3,4\}$ corresponds to the closed walk $(u_0,v_1,v_2,w_3,u_4,u_0)$ in $G$. The weight of this walk (which we calculate as the product of the weights of its edges) is $6$, so there are $6$ base $7$ over-expansions of $n$ with these specific values of $c_2,c_4,c_5,c_9,c_{11}$. As another example, there are no base $7$ over-expansions of $n$ in which $c_2=7$, $c_4=7$, $c_5=1$, $c_9=0$, and $c_{11}=1$ because the weight of the closed walk $(w_0,w_1,v_2,u_3,v_4,w_0)$ is $0$. 
\par 
We are finally in a position to enumerate the base $7$ over-expansions of $n$. To do so, we will need the following definition. 
\begin{definition} 
For any real $t$, let \[M_t=\left( \begin{array}{ccc}
t & 1 & t \\ 
t & 1 & t \\
0 & 1 & 0 \end{array} \right)\] and \[N_t=\left( \begin{array}{ccc}
t & 1 & t \\
t & 1 & t \\
0 & 0 & 0 \end{array} \right).\]
\end{definition} 
If we put the vertices of $G$ in the order $u_0,v_0,w_0,u_1,v_1,w_1,\ldots,$ $u_4,v_4,w_4$, then the adjacency matrix of $G$ (written as a block matrix) is
\[A=\left( \begin{array}{ccccc}
O & M_{d_1} & O & O & O \\
O & O & M_{d_2} & O & O \\
O & O & O & M_{d_3} & O \\
O & O & O & O & B \\
C & O & O & O & O
\end{array} \right),\] where $d_i=e_i-e_{i-1}-1$, \[B=\left( \begin{array}{ccc}
e_\ell-e_{\ell-1}-1 & 1 & 0 \\
e_\ell-e_{\ell-1}-1 & 1 & 0 \\
1 & 0 & 0 \end{array} \right)=\left( \begin{array}{ccc}
1 & 1 & 0 \\
1 & 1 & 0 \\
1 & 0 & 0 \end{array} \right),\] \[C=\left( \begin{array}{ccc}
e_0 & 1 & e_0 \\
e_0 & 1 & e_0 \\
e_0 & 1 & e_0 \end{array} \right)=\left( \begin{array}{ccc}
2 & 1 & 2 \\
2 & 1 & 2 \\
2 & 1 & 2 \end{array} \right),\] and $O$ denotes the $3\times 3$ zero matrix. The total number of base $7$ over-expansions of $n$ is equal to the sum of the weights of the closed walks of length $5$ in $G$ that start at $u_0$, $v_0$, or $w_0$. This, in turn, is equal to the sum of the first three diagonal entries of $A^5$. Using elementary linear algebra, we see that the first three rows and the first three columns of $A^5$ intersect in a $3\times 3$ block given by $M_1M_2M_3BC$. Therefore, the sum of the first three diagonal entries of $A^5$ is $\Tr(M_1M_2M_3BC)$, where $\Tr$ denotes the trace of a matrix. One easily calculates this value to be $158$. 
\par 
We now state this result in greater generality (and in a slightly different form). Sketching the proof of the following theorem, we trust the reader to see that the method used in the preceding example is representative of the method used in general. 
\begin{theorem} \label{Thm2} 
Let $e_0,e_1,\ldots,e_\ell$ be nonnegative integers with $e_0<e_1<\cdots<e_\ell$ (where $\ell\geq 1$). Let $d_i=e_i-e_{i-1}-1$ for all $i\in\{1,2,\ldots,\ell\}$. The number of base $b$ over-expansions of the number $b^{e_0}+b^{e_1}+\cdots+b^{e_\ell}$ is given by \[\Tr(N_{e_0}M_{d_1}M_{d_2}\cdots M_{d_\ell}).\] 
\end{theorem}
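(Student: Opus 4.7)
The plan is to generalize the digraph construction and transfer-matrix computation already spelled out in the preceding example. By Lemma~\ref{Lem3}, any base-$b$ over-expansion $c_tc_{t-1}\cdots c_0$ of $n=b^{e_0}+\cdots+b^{e_\ell}$ has $c_{e_i}\in\{0,1,b\}$ at each special position and $c_j\in\{0,b-1,b\}$ elsewhere. As in the example, such an over-expansion is built from the ordinary base-$b$ expansion by a uniquely determined sequence of cascading operations $Q_j$, and it can be read off by successively deciding $c_0,\ldots,c_{e_0}$, then $c_{e_0+1},\ldots,c_{e_1}$, and so on, up through $c_{e_\ell}$. This encodes each over-expansion as a walk through the three states $u,v,w$ at the special positions $e_0,\ldots,e_\ell$ (corresponding to $c_{e_i}=0,1,b$, respectively).

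First I would perform the local transition count. For each pair of endpoint states $(c_{e_{i-1}},c_{e_i})\in\{0,1,b\}^2$ I would count the admissible fillings of the digits strictly between $e_{i-1}$ and $e_i$; this count depends only on $d_i$, and assembling the nine results into a $3\times3$ array recovers $M_{d_i}$. The cases with $c_{e_{i-1}}\in\{0,1\}$ (rows~$1$ and~$2$ of $M_{d_i}$) follow at once: there are exactly $d_i$ choices of a single position $j\in\{e_{i-1}+1,\ldots,e_i-1\}$ at which to apply $Q_j$ to make $c_{e_i}\in\{0,b\}$, and a single choice (doing nothing) to leave $c_{e_i}=1$. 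The third row, which is the delicate case, is produced by the cascade: to set $c_{e_{i-1}}=b$ one must apply $Q_{e_{i-1}}$, whose effect propagates through the $d_i$ intermediate zero-positions (converting each into $b-1$) and reduces $c_{e_i}$ by $1$ before its value is decided at step~$i$. A completely analogous count for the initial segment $c_0,c_1,\ldots,c_{e_0}$ produces the matrix $N_{e_0}$; its third row vanishes because no ``carry'' enters position~$0$, so no $w$-type configuration at $e_0$ can arise from a nonexistent prior special position.

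Once the local counts are in place, the total number of base-$b$ over-expansions of $n$ equals
\[
\sum_{s_0,s_1,\ldots,s_\ell}v_{s_0}\prod_{i=1}^{\ell}(M_{d_i})_{s_{i-1},s_i},
\]
where $v=(e_0,1,e_0)$ is the common nonzero row of $N_{e_0}$ and the sum is restricted to sequences with $s_\ell\neq w$ (since $c_{e_\ell}=b$ would force $n\geq b^{e_\ell+1}$, contradicting $n<b^{e_\ell+1}$). To rewrite this as $\Tr(N_{e_0}M_{d_1}\cdots M_{d_\ell})$, I would expand
\[
\Tr(N_{e_0}M_{d_1}\cdots M_{d_\ell})=\sum_{s_0,s_1,\ldots,s_{\ell-1},s_\ell}(N_{e_0})_{s_\ell,s_0}\prod_{k=1}^{\ell}(M_{d_k})_{s_{k-1},s_k},
\]
and observe that the terms with $s_\ell=w$ automatically vanish (because of the zero third row of $N_{e_0}$), while the remaining terms reassemble correctly since rows~$1$ and~$2$ of $N_{e_0}$ are both equal to $v$. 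The main obstacle is carrying out the local count carefully in the $w$-row case, where the cascading behavior of $Q_{e_{i-1}}$ must be tracked explicitly; the rest of the argument is the routine transfer-matrix calculation already illustrated in the preceding example.
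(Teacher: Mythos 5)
Your overall strategy coincides with the paper's: encode each over-expansion as a walk through the states $\{0,1,b\}$ at the positions $e_0,\ldots,e_\ell$, compute the local transition counts, and assemble them by the transfer-matrix method. Your endgame is in fact cleaner than the paper's: the paper builds a block adjacency matrix with special final blocks $B$ and $C$, obtains $\Tr(M_{d_1}\cdots M_{d_{\ell-1}}BC)$, and then uses the identity $BC=M_{d_\ell}N_{e_0}$ together with cyclicity of the trace, whereas your direct expansion of $\Tr(N_{e_0}M_{d_1}\cdots M_{d_\ell})$ as a sum over state sequences, with the zero third row of $N_{e_0}$ enforcing $s_\ell\neq w$, reaches the same formula in one step. (One small slip there: the third row of $N_{e_0}$ vanishes because $c_{e_\ell}=b$ is impossible, exactly as you say in your last paragraph; your earlier explanation in terms of no carry entering position $0$ concerns the columns of $N_{e_0}$ and is not the reason that row is zero.)

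The genuine gap is that you defer precisely the computation that carries all of the content of the theorem: the transitions out of the state $c_{e_{i-1}}=b$. You describe the mechanism correctly (the borrow from $Q_{e_{i-1}}$ cascades through the $d_i$ intermediate positions and knocks the digit at $e_i$ down to $0$), but you never record the resulting three numbers and instead assert that the nine local counts ``recover $M_{d_i}$.'' Carrying the count out gives: one way to leave $c_{e_i}=0$ (do nothing), no way to have $c_{e_i}=1$ (once the cascade has reduced that digit to $0$, only $Q_{e_i}$ can raise it, and only to $b$), and one way to have $c_{e_i}=b$ (apply $Q_{e_i}$). That is the row $(1,0,1)$, which does \emph{not} agree with the printed third row $(0,1,0)$ of $M_t$. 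The printed row is in fact a misprint --- Lemma \ref{Lem8} with $t=1$ forces the third row of $M_1$ to be $(F_1,F_0,F_1)=(1,0,1)$, and with the printed matrix one gets $\Tr(N_1M_1^2)=12$ for $n=2^1+2^3+2^5=42$, whereas the true count is $s_2(43)=13$ --- but the point stands: ``assembling the counts recovers $M_{d_i}$'' is exactly the statement to be proved, it cannot be asserted without working the delicate case, and working it honestly would have exposed the discrepancy. A complete argument should also justify, at least briefly, your claim that every over-expansion arises from exactly one admissible sequence of $Q_j$'s, so that the walk weights count expansions without omission or repetition; you assert this uniqueness but give no argument (the paper is equally terse on this point, so it is a lesser, shared omission).
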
 
\begin{proof} 
Let $n=b^{e_0}+b^{e_1}+\cdots+b^{e_\ell}$. Let \[A=\left( \begin{array}{ccccccc}
O & M_{d_1} & O & \cdots & O & O & O \\
O & O & M_{d_2} & \cdots & O & O & O \\
O & O & O & \cdots & O & O & O \\
\vdots & \vdots & \vdots & \ddots & \vdots & \vdots & \vdots \\
O & O & O & \cdots & O & M_{d_{\ell-1}} & O \\
O & O & O & \cdots & O & O & B \\
C & O & O & \cdots & O & O & O
\end{array} \right),\] where \[B=\left( \begin{array}{ccc}
d_\ell & 1 & 0 \\
d_\ell & 1 & 0 \\
1 & 0 & 0 \end{array} \right)\hspace{.4cm}\text{and}\hspace{.5cm}C=\left( \begin{array}{ccc}
e_0 & 1 & e_0 \\
e_0 & 1 & e_0 \\
e_0 & 1 & e_0 \end{array} \right).\] Let $G$ be the edge-weighted digraph with vertex set $\{u_0,v_0,w_0,u_1,v_1,w_1,\ldots,$ $u_\ell,v_\ell,w_\ell\}$ and adjacency matrix $A$. Suppose we wish to construct a base $b$ over-expansion $c_tc_{t-1}\cdots c_0$ of $n$. As in the example above, specifying the values of $c_{e_0},c_{e_1},\ldots,c_{e_\ell}$ (each such value must be $0$, $1$, or $b$ by Lemma \ref{Lem3}) corresponds to choosing a closed walk of length $\ell+1$ in $G$ starting at $u_0$, $v_0$, or $w_0$. The weight of this walk is the number of base $b$ over-expansions $c_{e_\ell}c_{e_\ell-1}\cdots c_0$ of $n$ that have the specified values of $c_{e_0},c_{e_1},\ldots,c_{e_\ell}$. Therefore, the total number 
of base $b$ over-expansions of $n$ is equal to the sum of the first three diagonal entries of $A^{\ell+1}$. The first three rows and the first three columns of $A^{\ell+1}$ intersect in the $3\times 3$ block $M_{d_1}M_{d_2}\cdots M_{d_{\ell-1}}BC$, so the number of base $b$ 
over-expansions of $n$ is $\Tr(M_{d_1}M_{d_2}\cdots M_{d_{\ell-1}}BC)$. Now, one may easily calculate that $BC=M_{d_\ell}N_{e_0}$. Therefore, using the fact that $\Tr(XY)=\Tr(YX)$ for any square matrices $X,Y$ of the same 
size, we conclude that \[\Tr(M_{d_1}M_{d_2}\cdots M_{d_{\ell-1}}BC)=\Tr(M_{d_1}M_{d_2}\cdots M_{d_{\ell-1}}M_{d_{\ell}}N_{e_0})\] \[=\Tr(N_{e_0}M_{d_1}M_{d_2}\cdots M_{d_\ell}).\] 
\end{proof}             
\begin{definition} 
We define $\Xi=\left\{\left( \begin{array}{ccc}
x_1 & z & x_1 \\
x_1 & z & x_1 \\
x_2 & x_3 & x_2 \end{array} \right)\colon x_1,x_2,x_3,z\geq 0, z\neq 0\right\}$ and $\Xi'=\Xi\cup\{I_3\}$, where $I_3$ is the $3\times 3$ identity matrix.
\end{definition}   
We omit the proofs of the following three lemmas because they are fairly straightforward.  
\begin{lemma} \label{Lem6} 
The set $\Xi$ is closed under matrix multiplication, and $\Xi'$ is a monoid under matrix multiplication. For any nonnegative real number $u$, $M_u,N_u\in\Xi$. 
\end{lemma}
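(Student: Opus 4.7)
The plan is to verify each claim by a direct, mostly mechanical computation, exploiting the very rigid column/row structure that defines $\Xi$.

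First I would check the membership assertion $M_u, N_u \in \Xi$. Reading off the entries, $M_u$ fits the $\Xi$ template with $x_1 = u$, $z = 1$, $x_2 = 0$, $x_3 = 1$, and $N_u$ fits it with $x_1 = u$, $z = 1$, $x_2 = 0$, $x_3 = 0$; in both cases every entry is nonnegative and $z = 1 \neq 0$, so both matrices lie in $\Xi$.

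Next I would prove closure of $\Xi$ under multiplication. Write two arbitrary elements of $\Xi$ as
\[
A = \begin{pmatrix} a & c & a \\ a & c & a \\ b & d & b \end{pmatrix}, \qquad A' = \begin{pmatrix} a' & c' & a' \\ a' & c' & a' \\ b' & d' & b' \end{pmatrix},
\]
with $c, c' > 0$ and the remaining parameters nonnegative. The key structural observation is that the first and third columns of $A'$ are identical, which forces the first and third columns of $AA'$ to be identical; likewise, the first and second rows of $A$ are identical, which forces the first and second rows of $AA'$ to be identical. A quick calculation gives the $(1,1)$ entry as $a(a'+b') + ca'$, the $(1,2)$ entry as $a(c'+d') + cc'$, the $(3,1)$ entry as $b(a'+b') + da'$, and the $(3,2)$ entry as $b(c'+d') + dc'$; all four are nonnegative, and the $(1,2)$ entry is bounded below by $cc' > 0$, so $AA'$ again has the $\Xi$ shape with a strictly positive middle-column entry. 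Hence $AA' \in \Xi$.

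Finally I would handle $\Xi' = \Xi \cup \{I_3\}$. Associativity is automatic from matrix multiplication, and $I_3$ serves as the identity since $I_3 X = X I_3 = X$ for every $3\times 3$ matrix $X$. For closure, any product of two elements of $\Xi'$ either equals $I_3 \cdot I_3 = I_3$, equals an element of $\Xi$ via $I_3 \cdot A = A \cdot I_3 = A$, or lies in $\Xi$ by the previous paragraph. Thus $\Xi'$ is a monoid.

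The only subtle point — and the sole reason for carving out $\Xi'$ separately rather than just working with $\Xi$ — is that $I_3$ itself does not satisfy the column-equality condition defining $\Xi$, so one must adjoin it by hand to obtain an identity. Everything else is a routine matrix computation, and there is no hidden obstacle.
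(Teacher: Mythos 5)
Your proof is correct; the paper actually omits the proof of this lemma, declaring it ``fairly straightforward,'' and your direct verification (checking the template for $M_u$ and $N_u$, computing the four distinct entries of a product of two elements of $\Xi$ and noting the middle entry is at least $cc'>0$, then adjoining $I_3$ to supply the identity) is exactly the routine computation the author had in mind. Nothing is missing.
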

\begin{lemma} \label{Lem7}  
Let $X$ be a $3\times 3$ matrix with nonnegative real entries, and let $Y\in\Xi'$. If the second diagonal entry (the entry in the second row and the second column) of $X$ is positive, then $\Tr(XY)>0$. 
\end{lemma}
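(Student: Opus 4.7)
The plan is to split into two cases according to whether $Y=I_3$ or $Y\in\Xi$. In the first case, $\Tr(XY)=\Tr(X)=x_{11}+x_{22}+x_{33}$, and since every entry of $X$ is nonnegative while $x_{22}>0$, this trace is strictly positive.

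For the main case $Y\in\Xi$, I would write $Y$ in its canonical form with parameters $x_1,x_2,x_3,z$ (with $z>0$) and expand the trace using the formula $\Tr(XY)=\sum_{i,j}x_{ij}y_{ji}$, where $x_{ij}$ denotes the $(i,j)$ entry of $X$. Because every entry of $Y$ is nonnegative and every entry of $X$ is nonnegative, every term $x_{ij}y_{ji}$ in this sum is nonnegative, so it suffices to exhibit a single term that is strictly positive. The key observation is that the $(2,2)$ entry of $Y$ is exactly $z$, so the expansion of $\Tr(XY)$ contains the summand $x_{22}\cdot z$. Since $x_{22}>0$ by hypothesis and $z>0$ by the definition of $\Xi$, this single summand is strictly positive, and hence $\Tr(XY)>0$.

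There is essentially no obstacle here beyond correctly identifying which entry of $Y$ pairs with $x_{22}$ in the trace expansion; the shape of the matrices in $\Xi$ is rigged precisely so that the middle column is $(z,z,x_3)^T$ and in particular the center entry equals the strictly positive parameter $z$. The hypothesis that $x_{22}>0$ then does all of the remaining work, and no cancellation concerns arise because everything in sight is nonnegative.
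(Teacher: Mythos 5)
Your proof is correct: the paper omits the proof of this lemma as ``fairly straightforward,'' and your argument---expanding $\Tr(XY)=\sum_{i,j}x_{ij}y_{ji}$, noting all terms are nonnegative, and isolating the strictly positive summand $x_{22}\cdot z$ (together with the trivial case $Y=I_3$)---is exactly the intended routine verification. No issues.
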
 
\begin{lemma} \label{Lem8}
For any positive integer $t$, \[M_1^t=\left( \begin{array}{ccc}
F_{2t} & F_{2t-1} & F_{2t} \\
F_{2t} & F_{2t-1} & F_{2t} \\
F_{2t-1} & F_{2t-2} & F_{2t-1} \end{array} \right).\]
\end{lemma}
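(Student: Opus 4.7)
My plan is to prove the identity by induction on $t$. The base case $t=1$ is a direct computation: substituting $F_{0}=0$ and $F_{1}=F_{2}=1$ into the claimed right-hand side produces the matrix $M_{1}$ itself, so the identity reduces to a nine-entry comparison.

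For the inductive step, I assume the formula holds for some $t\geq 1$ and form $M_{1}^{t+1}=M_{1}\cdot M_{1}^{t}$. Two structural observations trim the bookkeeping considerably. First, rows $1$ and $2$ of $M_{1}$ are identical, forcing rows $1$ and $2$ of the product to coincide as well, in agreement with the symmetry of the claimed formula. Second, columns $1$ and $3$ of the inductive hypothesis are identical, forcing columns $1$ and $3$ of the product to coincide. Consequently only the four entries in positions $(1,1)$, $(1,2)$, $(3,1)$, and $(3,2)$ require independent verification. Each such entry, obtained as the inner product of a row of $M_{1}$ with a column of $M_{1}^{t}$, collapses via a single application of the Fibonacci recurrence $F_{m+2}=F_{m+1}+F_{m}$ to yield $F_{2t+2}$, $F_{2t+1}$, $F_{2t+1}$, and $F_{2t}$, respectively --- precisely the entries prescribed by the formula at $t+1$.

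There is no real obstacle here; the lemma is a matrix-level repackaging of the two Fibonacci identities $F_{2t+2}=F_{2t+1}+F_{2t}$ and $F_{2t+1}=F_{2t}+F_{2t-1}$, which is presumably why the author omits the proof. The only care required is to exploit the symmetries of $M_{1}$ and of the inductive hypothesis so as not to repeat essentially the same Fibonacci manipulation nine times.
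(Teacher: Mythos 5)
Your strategy (induction on $t$, using the repeated rows of $M_1$ and the repeated columns of the inductive hypothesis to reduce the check to four entries) is exactly the right one, and since the paper omits the proof of this lemma entirely there is nothing to compare against on that score. However, your base case assertion is false for the matrix $M_1$ as it is actually printed in the paper. The definition gives $M_t$ the third row $(0,1,0)$, so $M_1$ has third row $(0,1,0)$, whereas the right-hand side of the lemma at $t=1$ has third row $(F_1,F_0,F_1)=(1,0,1)$. The ``nine-entry comparison'' you describe therefore does not come out to an identity, and with the printed definition the lemma is already false at $t=1$. The problem propagates into your inductive step as well: with third row $(0,1,0)$ the $(3,1)$ entry of $M_1\cdot M_1^{t}$ is $0\cdot F_{2t}+1\cdot F_{2t}+0\cdot F_{2t-1}=F_{2t}$, not the required $F_{2t+1}$. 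What has gone wrong is a typo in the paper's definition of $M_t$: the third row should be $(1,0,1)$. This is forced both by the combinatorics (from the state ``$c_{e_i}=b$'' the next distinguished digit can be set to $0$ or to $b$ in exactly one way each and cannot be set to $1$) and by the later identities the paper derives from this lemma; for instance, $N_2M_1^tM_{q-1}-N_1M_1^tM_q=F_{2t+2}N_{q-2}$ in the proof of Lemma \ref{Lem10} holds with third row $(1,0,1)$ and fails with $(0,1,0)$ (already at $t=0$ the $(1,1)$ entries differ by $1$).

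With the corrected $M_1$, whose third row is $(1,0,1)$, your argument is sound: the base case is then a genuine identity, rows $1$ and $2$ of the product agree because rows $1$ and $2$ of $M_1$ do, columns $1$ and $3$ agree because columns $1$ and $3$ of $M_1^{t}$ do, and the four remaining entries are $2F_{2t}+F_{2t-1}=F_{2t+2}$, $2F_{2t-1}+F_{2t-2}=F_{2t+1}$, $F_{2t}+F_{2t-1}=F_{2t+1}$, and $F_{2t-1}+F_{2t-2}=F_{2t}$, as you claim (though note the first two of these use the Fibonacci recurrence twice, not once). The gap to repair is not the method but the verification itself: you asserted that the formula at $t=1$ ``produces the matrix $M_1$ itself'' without checking it against the printed matrix. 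You should state explicitly which $M_1$ your computation uses and flag the discrepancy with the paper's Definition of $M_t$, since as written the statement you are proving is not literally true.
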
 
The following lemma attempts to gain information about the ordinary base $b$ expansion of $\nu(k,r,y)-1$ when $2y<r<k-1$. By Lemma \ref{Lem4}, all of the digits in that expansion are $0$'s and $1$'s, so we may write $\nu(k,r,y)-1=b^{e_0}+b^{e_1}+\cdots+b^{e_\ell}$ for some nonnegative integers $e_0,e_1,\ldots,e_\ell$ that satisfy $e_0<e_1<\cdots<e_\ell=k$. Because $\nu(k,r,y)\in I(k,r,y)$, we have $b^k\leq\nu(k,r,y)-1<b^k+\displaystyle{\sum_{i=0}^yb^{r-2i}}$. It follows that $e_{\ell-1-j}\leq r-2j$ for all $j\in\{0,1,\ldots,y\}$, where equality cannot hold for all $j$. Hence, we may consider the smallest nonnegative integer $\lambda$ such that $e_{\ell-1-\lambda}\leq r-2\lambda-1$. 
\begin{lemma} \label{Lem9}
Let $k,r,y$ be nonnegative integers with $2y<r<k-1$. Let $a_ka_{k-1}\cdots a_0$ be the ordinary base $b$ expansion of $\nu(k,r,y)-1$, and write $\nu(k,r,y)-1=b^{e_0}+b^{e_1}+\cdots+b^{e_\ell}$, where $e_0,e_1,\ldots,e_\ell$ are nonnegative integers that satisfy $e_0<e_1<\cdots<e_\ell=k$. Let $\lambda$ be the smallest nonnegative integer such that $e_{\ell-1-\lambda}\leq r-2\lambda-1$. There does not exist $i\in\{1,2,\ldots,e_{\ell-1-\lambda}\}$ such that $a_i=a_{i+1}$.
\end{lemma}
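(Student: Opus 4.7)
The plan is to argue by contradiction. Assume some $i\in\{1,\ldots,e_{\ell-1-\lambda}\}$ satisfies $a_i=a_{i+1}$. I will construct $n'\in I(k,r,y)$ with $n'<\nu(k,r,y)$ and $s_b(n')\geq s_b(\nu(k,r,y))$, contradicting the minimality of $\nu(k,r,y)$ in Definition \ref{Def3}. Lemma \ref{Lem4} forces $a_m\in\{0,1\}$ for all $m\in\{0,\ldots,k\}$, so the violation splits into Case 1 ($a_i=a_{i+1}=1$) and Case 2 ($a_i=a_{i+1}=0$).

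In Case 1 I select $i$ to be minimal, which (combined with $a_0=0$) forces $a_{i-1}=0$; then $i=e_j$ and $i+1=e_{j+1}$ for some $j$, so $d_{j+1}=0$ and (when $j\geq 1$) $d_j\geq 1$. The candidate modification shifts $e_j$ down by one, producing $n'-1=(\nu(k,r,y)-1)-b^i+b^{i-1}$ and replacing the block $(d_j,0)$ in the $d$-sequence by $(d_j-1,1)$. In Case 2 I take the unique $j$ with $e_j<i<i+1<e_{j+1}$, so $d_{j+1}\geq 2$, and shift $e_{j+1}$ down by one; this replaces the block $(d_{j+1},d_{j+2})$ with $(d_{j+1}-1,d_{j+2}+1)$. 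In both modifications $e_\ell=k$ is left fixed, so $n'>b^k$, and $n'<\nu(k,r,y)$ places $n'$ inside $I(k,r,y)$.

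The core of the proof is verifying $s_b(n')\geq s_b(\nu(k,r,y))$. By Theorem \ref{Thm2} both quantities are traces of a product $N_{e_0}M_{d_1}\cdots M_{d_\ell}$ for the corresponding $d$-sequence, and those two sequences differ in only two consecutive entries. Letting $W$ denote the unaltered portion of the product and invoking cyclic invariance of the trace, the task reduces to checking $\Tr(A'W)\geq\Tr(AW)$, where $A$ and $A'$ are the old and new pairs of $3\times 3$ matrix factors. Lemma \ref{Lem6} puts $W$ in $\Xi'$, so substituting the symmetric form of $W$ from the definition of $\Xi$ together with the explicit entries of $M_t$ expands $\Tr((A'-A)W)$ into a linear combination of the nonnegative entries $x_1,x_2,x_3,z$ of $W$ with coefficients depending on the local $d_i$'s. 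A direct computation confirms that this combination is nonnegative whenever the local gaps are large enough; for instance in Case 1 with $d_j\geq 2$ the difference expands to $(d_j-1)x_1+(d_j+1)x_2+d_jx_3+(d_j-2)z\geq 0$.

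The main obstacle is the edge subcases where the naive modification fails — in Case 1 when $d_j=1$, and in Case 2 when $d_{j+2}$ is comparable to $d_{j+1}$ — since then the coefficient of $z$ in the above linear combination becomes negative. I plan to treat these by passing to a refined modification: in Case 1 shifting the bottom exponent of a longer run of consecutive $1$'s further down, and in Case 2 shifting two exponents simultaneously, so that the corresponding change in the matrix product again produces a nonnegative trace difference. The restriction $i\leq e_{\ell-1-\lambda}$ keeps every such modification safely below the rigid upper block $e_{\ell-\lambda}=r-2\lambda+2,\,\ldots,\,e_{\ell-1}=r,\,e_\ell=k$ forced by $\lambda$, which provides the room needed to carry out the refinement while still guaranteeing that the modified $n'$ lies in $I(k,r,y)$.
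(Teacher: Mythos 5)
Your high-level strategy --- perturb $\nu(k,r,y)$ inside $I(k,r,y)$, compare via the trace formula of Theorem \ref{Thm2}, and use the structure of $\Xi$ and cyclicity of the trace to control the sign --- is the same as the paper's, and your main-case computations are correct: the swap $(M_{d_j}M_0)\mapsto(M_{d_j-1}M_1)$ does give trace difference $(d_j-1)x_1+(d_j+1)x_2+d_jx_3+(d_j-2)z$ against $W\in\Xi$, and $(M_pM_q)\mapsto(M_{p-1}M_{q+1})$ gives $(p-q-1)x_1+(p-q+1)x_2+(p-q)x_3+(p-q-2)z$. The problem is that the subcases you defer to a ``refined modification'' are not edge cases but the heart of the lemma. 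In your Case 1, minimality of $i$ forces the digits below the violating pair to alternate, so the local picture is $\cdots 1011\cdots$ and $d_j=1$ exactly; in your Case 2 the condition $d_{j+1}\geq d_{j+2}+2$ fails for the typical pattern $\cdots 1001\cdots$ with the next $1$ resuming promptly above. These are precisely the configurations that near-maximizers exhibit (the true maximizers $\gamma(k,r,x)$ are almost fully alternating), so the coefficient of $z$ is negative exactly where the argument is needed, and $z$ is not in general dominated by $x_1,x_2,x_3$. The paper's resolution is not a local two-factor swap: it shifts the \emph{entire} initial alternating block $e_0,\ldots,e_t$ up or down by one and evaluates the difference exactly via the closed form of $M_1^t$ (Lemma \ref{Lem8}), e.g.\ $N_2M_1^tM_{d_{t+1}-1}-N_1M_1^tM_{d_{t+1}}=F_{2t+2}N_{d_{t+1}-2}$ with $d_{t+1}\geq 2$, after which Lemma \ref{Lem7} applies. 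Your sketch (``shift the bottom exponent of a longer run of consecutive $1$'s'') does not describe this --- there is no such longer run in the offending configuration --- so the essential step remains unproved. Note also that the working block-shift sometimes produces $n'>\nu$, so you must admit contradictions of the form $s_b(n')>\mu(k,r,y)$ as well, not only $n'<\nu$ with $s_b(n')\geq s_b(\nu)$.

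Two further omissions: when the violation is $a_1=a_2=0$ your Case 2 setup has no index $j$ with $e_j<1$ (since $a_0=0$), so there is no pair $(d_{j+1},d_{j+2})$ to modify; the paper handles this configuration by a completely different argument (its Case 5), comparing $s_b(\nu)$ with $s_b(\nu+b)$ directly through \eqref{Eq1} and Lemma \ref{Lem2}. And when the violating pair sits at the bottom ($j=0$), your swap alters the factor $N_{e_0}$ rather than an $M$-factor, which changes the matrix computation and must be treated separately. As it stands the proposal establishes the lemma only in configurations that, by the minimality argument itself, essentially never occur for $\nu(k,r,y)$.
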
 
\begin{proof} 
Let $\nu=\nu(k,r,y)$. Note that it follows from the preceding paragraph that $\lambda\leq y$. Because $\nu-1=b^{e_0}+b^{e_1}+\cdots+b^{e_\ell}$, we have 
\begin{equation}\label{Eq6} 
a_i=\begin{cases} 1, & \mbox{if } i=e_j\text{ for some }j; \\ 0, & \mbox{otherwise} . \end{cases} 
\end{equation} for all $i\in\{0,1,\ldots,k\}$. In particular, $e_0>0$ because $a_0=0$ by Lemma \ref{Lem4} We will let $d_j=e_j-e_{j-1}-1$ for all $j\in\{1,2,\ldots,\ell\}$. Suppose, by way of contradiction, that $a_i=a_{i+1}$ for some $i\in\{1,2,\ldots,e_{\ell-1-\lambda}\}$, and let $m$ be the smallest such index $i$. We have five cases to consider. 
\par 
First, assume $a_1=1$ and $a_m=a_{m+1}=0$. By the minimality of $m$, we see that $a_i=0$ for all nonnegative even integers $i<m$ and $a_j=1$ for all positive odd integers $j<m$. In addition, $m$ must be even. Therefore, setting $t=\dfrac{m-2}{2}$, we have $e_j=2j+1$ for all $j\in\{0,1,\ldots,t\}$. This means that $d_j=1$ for all $j\in\{1,2,\ldots,t\}$. It follows from \eqref{Eq6} and the assumption that $a_m=a_{m+1}=0$ that 
\begin{equation} \label{Eq26}
e_t=m-1<m<m+1<e_{t+1}<e_{t+2}<\cdots<e_\ell=k.
\end{equation} 
Note that $t<\ell-1-\lambda$ because $e_t<m\in\{1,2,\ldots,e_{\ell-1-\lambda}\}$. Let us write $\nu_1=1+\displaystyle{\sum_{i=0}^tb^{e_i+1}+\sum_{i=t+1}^\ell b^{e_i}}$. We have \[\nu_1=1+\sum_{i=0}^tb^{e_i+1}+\sum_{i=t+1}^\ell b^{e_i}\leq1+\sum_{n=1}^mb^n+\sum_{i=t+1}^\ell b^{e_i}\] \[\leq1+\sum_{n=1}^{e_{t+1}-2}b^n+\sum_{i=t+1}^\ell b^{e_i}<b^{e_{t+1}-1}+\sum_{i=t+1}^\ell b^{e_i}\] \[=b^{e_{t+1}-1}+\sum_{i=t+1}^{\ell-1-\lambda}b^{e_i}+\sum_{i=\ell-\lambda}^\ell b^{e_i}.\] Recall from the paragraph immediately preceding this theorem that $e_{\ell-1-j}\leq r-2j$ for all $j\in\{0,1,\ldots,y\}$. Since $\lambda$ is the smallest nonnegative integer such that $e_{\ell-1-\lambda}\leq r-2\lambda-1$, $e_{\ell-1-i}=r-2i$ for all nonnegative integers $i<\lambda$. Furthermore, $e_\ell=k$, so we find that $\displaystyle{\sum_{i=\ell-\lambda}^\ell b^{e_i}=b^k+\sum_{i=0}^{\lambda-1}b^{r-2i}}$. Therefore, \[\nu_1<b^{e_{t+1}-1}+\sum_{i=t+1}^{\ell-1-\lambda}b^{e_i}+b^k+\sum_{i=0}^{\lambda-1}b^{r-2i}\] \[<b^{e_{\ell-1-\lambda}+1}+b^k+\sum_{i=0}^{\lambda-1}b^{r-2i}\leq b^k+b^{r-2\lambda}+\sum_{i=0}^{\lambda-1}b^{r-2i}\leq b^k+\sum_{i=0}^yb^{r-2i},\] where we have used the inequalities $e_{\ell-1-\lambda}\leq r-2\lambda-1$ and $\lambda\leq y$. This shows that $\nu_1\in I(k,r,y)$, so $s_b(\nu_1)\leq s_b(\nu)$ by the definition of $\nu$. Our goal is to derive a contradiction by showing that $s_b(\nu_1)>s_b(\nu)$. Let $B=M_{d_{t+2}}M_{d_{t+3}}\cdots M_{d_\ell}$. We saw that $t<\ell-1-\lambda$, so $t+1<\ell$. This shows that the matrix product defining $B$ is nonempty. Lemma \ref{Lem6} implies that $B\in\Xi$. 
Now, $e_0=2(0)+1=1$. By Theorem \ref{Thm2}, \[s_b(\nu)=\Tr(N_1M_{d_1}M_{d_2}\cdots M_{d_\ell})=\Tr(N_1M_1^tM_{d_{t+1}}B).\] We may write $\nu_1-1=b^{e_0'}+b^{e_1'}+\ldots+b^{e_\ell'}$, where $e_j'=e_j+1$ for all $j\in\{0,1,\ldots,t\}$ and $e_j'=e_j$ for all $j\in\{t+1,\ldots,\ell\}$. Setting $d_j'=e_j'-e_{j-1}'$ for all $j\in\{1,2,\ldots,\ell\}$, we have \[s_b(\nu_1)=\Tr(N_{e_0'}M_{d_1'}M_{d_2'}\cdots M_{d_\ell'})=\Tr(N_2M_1^tM_{d_{t+1}-1}B)\] by Theorem \ref{Thm2}. Consequently, \[s_b(\nu_1)-s_b(\nu)=\Tr((N_2M_1^tM_{d_{t+1}-1}-N_1M_1^tM_{d_{t+1}})B).\] We remark that one must take care when considering the case $t=0$. In this case, $M_1^t$ is the $3\times 3$ identity matrix. A straightforward calculation invoking Lemma \ref{Lem8} shows that \[N_2M_1^tM_{d_{t+1}-1}-N_1M_1^tM_{d_{t+1}}=F_{2t+2}N_{d_{t+1}-2},\] so \[s_b(\nu_1)-s_b(\nu)=F_{2t+2}\Tr(N_{d_{t+1}-2}B).\] Because $d_{t+1}=e_{t+1}-e_t-1\geq (m+2)-(m-1)-1=2$ by \eqref{Eq26}, we see that $N_{d_{t+1}-2}$ is a matrix with nonnegative entries whose second diagonal entry is positive. By Lemma \ref{Lem7}, $s_b(\nu_1)-s_b(\nu)>0$, which is our desired contradiction. 
\par 
Next, we assume $a_1=1$ and $a_m=a_{m+1}=1$. The proof is similar to that given in the preceding paragraph. It follows from the minimality of $m$ that $a_i=0$ for all even nonnegative integers $i<m$ and $a_j=1$ for all odd positive integers $j<m$. Also, $m$ must be odd. Let $t=\dfrac{m-1}{2}$. We have $e_j=2j+1$ for all $j\in\{0,1,\ldots,t\}$ and \[e_t=m<m+1=e_{t+1}<e_{t+2}<\cdots<e_\ell=k.\] We know from Lemma \ref{Lem4} that $a_j=0$ for all $j\in\{r+1,\ldots,k-1\}$, so $m<r$. This implies that $e_{t+1}=m+1\leq r<k=e_\ell$. Setting $\nu_2=1+\displaystyle{\sum_{i=0}^{t-1}b^{e_i+1}+\sum_{i=t+1}^\ell b^{e_i}}$, we have \[\nu_2-1=\sum_{i=0}^{t-1}b^{e_i+1}+\sum_{i=t+1}^\ell b^{e_i}=\sum_{i=0}^{t-1}b^{e_{i+1}-1}+\sum_{i=t+1}^\ell b^{e_i}=\sum_{i=1}^tb^{e_i-1}+\sum_{i=t+1}^\ell b^{e_i}\] \[<\nu-1<b^k+\sum_{i=0}^yb^{r-2i},\] so $\nu_2\in I(k,r,y)$ and $\nu_2<\nu$. It follows from the definition of $\nu$ that $s_b(\nu_2)<s_b(\nu)$. As in the preceding case, we let $B=M_{d_{t+2}}M_{d_{t+3}}\cdots M_{d_\ell}$. The matrix product defining $B$ is nonempty because $e_{t+1}<e_\ell$. In addition, $B\in\Xi$, so we may write \[B=\left(\begin{array}{ccc}
\xi_1 & \xi_2 & \xi_1 \\
\xi_1 & \xi_2 & \xi_1 \\
\xi_3 & \xi_4 & \xi_3 \end{array} \right)\] for some nonnegative real $\xi_1,\xi_2,\xi_3,\xi_4$  with $\xi_2>0$. Since $d_{t+1}=e_{t+1}-e_t-1=(m+1)-m-1=0$, we have by Theorem \ref{Thm2} that \[s_b(\nu)=\Tr(N_1M_1^tM_0B)\] and \[s_b(\nu_2)=\Tr(N_2M_1^tB).\] As in the previous paragraph, note that $M_1^t$ is the $3\times 3$ identity matrix if $t=0$. By elementary calculations that make use of Lemma \ref{Lem8}, we find that \[N_2M_1^t-N_1M_1^tM_0=F_{2t+1}\left(\begin{array}{ccc}
1 & -1 & 1 \\
1 & -1 & 1 \\
0 & 0 & 0 \end{array} \right).\] Thus, \[s_b(\nu_2)-s_b(\nu)=\Tr((N_2M_1^t-N_1M_1^tM_0)B)=F_{2t+1}(\xi_3+\xi_4)\geq 0,\] which is a contradiction. 
\par 
We now assume $a_1=0$, $a_2=1$, and $a_m=a_{m+1}=0$. Invoking the minimality of $m$, we see that $a_i=1$ for all positive even integers $i<m$ and $a_j=1$ for all positive odd integers $j<m$. The index $m$ must be odd. Let $t=\dfrac{m-3}{2}$. We find that $e_j=2j+2$ for all $j\in\{0,1,\ldots,t\}$, so $d_j=1$ for all $j\in\{1,2,\ldots,t\}$. Coupling \eqref{Eq6} with the assumption that $a_m=a_{m+1}=0$ shows us that 
\begin{equation} \label{Eq27} 
e_t=m-1<m<m+1<e_{t+1}<e_{t+2}<\cdots<e_\ell=k.
\end{equation} 
Thus, $t<\ell-1-\lambda$ because $e_t<m\in\{1,2,\ldots,e_{\ell-1-\lambda}\}$. Let $\nu_3=1+b+\displaystyle{\sum_{i=0}^tb^{e_i+1}+\sum_{i=t+1}^\ell b^{e_i}}$. We find that \[\nu_3-1=b+\displaystyle{\sum_{i=0}^tb^{e_i+1}+\sum_{i=t+1}^\ell b^{e_i}}\leq\sum_{n=1}^{e_t+1}b^n+\sum_{i=t+1}^\ell b^{e_i}\] \[\leq\sum_{n=1}^{e_{t+1}-2}b^n+\sum_{i=t+1}^\ell b^{e_i}<b^{e_{t+1}-1}+\sum_{i=t+1}^\ell b^{e_i}=b^{e_{t+1}-1}+\sum_{i=t+1}^{\ell-1-\lambda}b^{e_i}+\sum_{i=\ell-\lambda}^\ell b^{e_i}.\] Because $e_\ell=k$ and $e_{\ell-1-i}=r-2i$ for all nonnegative integers $i<\lambda$, we have $\displaystyle{\sum_{i=\ell-\lambda}^\ell b^{e_i}=b^k+\sum_{i=0}^{\lambda-1}b^{r-2i}}$. Consequently, \[\nu_3-1\leq b^{e_{t+1}-1}+\sum_{i=t+1}^{\ell-1-\lambda}b^{e_i}+b^k+\sum_{i=0}^{\lambda-1}b^{r-2i}<b^{e_{\ell-1-\lambda}+1}+b^k+\sum_{i=0}^{\lambda-1}b^{r-2i}\] \[\leq b^k+b^{r-2\lambda}+\sum_{i=0}^{\lambda-1}b^{r-2i}\leq b^k+\sum_{i=0}^yb^{r-2i},\] where we have used the inequalities $e_{\ell-1-\lambda}\leq r-2\lambda-1$ and $\lambda\leq y$. It follows that $\nu_3\in I(k,r,y)$, so $s_b(\nu_3)\leq s_b(\nu)$. Let $B=M_{d_{t+2}}M_{d_{t+3}}\cdots M_{d_\ell}$. As in the previous two cases, the matrix product defining $B$ is nonempty (because $t+1<\ell-\lambda\leq\ell$) and $B\in\Xi$. Note that $e_0=2(0)+2=2$.
We have \[s_b(\nu)=\Tr(N_2M_1^tM_{d_{t+1}}B)\] and \[s_b(\nu_3)=\Tr(N_1M_1^{t+1}M_{d_{t+1}-1}B),\] so \[s_b(\nu_3)-s_b(\nu)=\Tr((N_1M_1^{t+1}M_{d_{t+1}-1}-N_2M_1^tM_{d_{t+1}})B).\] Using Lemma \ref{Lem8}, one may easily show that \[N_1M_1^{t+1}M_{d_{t+1}-1}-N_2M_1^tM_{d_{t+1}}=F_{2t+3}N_{d_{t+1}-2},\] so \[s_b(\nu_3)-s_b(\nu)=F_{2t+3}\Tr(N_{d_{t+1}-2}B).\] Because $d_{t+1}=e_{t+1}-e_t-1\geq (m+2)-(m-1)-1=2$ by \eqref{Eq27}, there follows that $N_{d_{t+1}-2}$ is a matrix with nonnegative entries whose second diagonal entry is positive. By Lemma \ref{Lem7}, $s_b(\nu_3)-s_b(\nu)>0$, a contradiction. 
\par 
The fourth case we consider is that in which $a_1=0$, $a_2=1$, and $a_m=a_{m+1}=1$. By now the reader is probably familiar with the general pattern of the proof. We use the minimality of $m$ to say that $a_i=1$ for all positive even integers $i<m$ and $a_j=1$ for all positive odd integers $j<m$. The index $m$ must be even. Let $t=\dfrac{m-2}{2}$, and observe that $e_j=2j+2$ for all $j\in\{0,1,\ldots,t\}$ and that \[e_t=m<m+1=e_{t+1}<e_{t+2}<\cdots<e_\ell=k.\] We know that $m<r$ because Lemma \ref{Lem4} tells us that $a_j=0$ for all $j\in\{r+1,\ldots,k-1\}$. Therefore, $e_{t+1}\leq r<k=e_\ell$. Let $\nu_4=1+b+\displaystyle{\sum_{i=0}^{t-1}b^{e_i+1}+\sum_{i=t+1}^\ell b^{e_i}}$. We have \[\nu_4-1=b+\sum_{i=0}^{t-1}b^{e_i+1}+\sum_{i=t+1}^\ell b^{e_i}=b+\sum_{i=0}^{t-1}b^{e_{i+1}-1}+\sum_{i=t+1}^\ell b^{e_i}\] \[=b+\sum_{i=1}^tb^{e_i-1}+\sum_{i=t+1}^\ell b^{e_i}=\sum_{i=0}^tb^{e_i-1}+\sum_{i=t+1}^\ell b^{e_i}<\nu-1<b^k+\sum_{i=0}^yb^{r-2i},\] so $\nu_4\in I(k,r,y)$ and $\nu_4<\nu$. This implies that $s_b(\nu_4)<s_b(\nu)$. Let $B=M_{d_{t+2}}M_{d_{t+3}}\cdots M_{d_\ell}$. The matrix product defining $B$ is nonempty because $e_{t+1}<e_\ell$, and $B\in\Xi$. Consequently, \[B=\left(\begin{array}{ccc}
\xi_1 & \xi_2 & \xi_1 \\
\xi_1 & \xi_2 & \xi_1 \\
\xi_3 & \xi_4 & \xi_3 \end{array} \right)\] for some nonnegative real $\xi_1,\xi_2,\xi_3,\xi_4$  with $\xi_2>0$. Because $d_{t+1}=e_{t+1}-e_t-1=(m+1)-m-1=0$, we may use Theorem \ref{Thm2} to deduce that \[s_b(\nu)=\Tr(N_2M_1^tM_0B)\] and \[s_b(\nu_4)=\Tr(N_1M_1^{t+1}B).\] One may show that \[N_1M_1^{t+1}-N_2M_1^tM_0=F_{2t+2}\left(\begin{array}{ccc}
1 & -1 & 1 \\
1 & -1 & 1 \\
0 & 0 & 0 \end{array} \right),\] so \[s_b(\nu_4)-s_b(\nu)=\Tr((N_1M_1^{t+1}-N_2M_1^tM_0)B)=F_{2t+2}(\xi_3+\xi_4)\geq 0.\] This is our desired contradiction. 
\par 
Finally, we consider the case in which $a_1=a_2=0$. In this case, $\nu\equiv 1\pmod{b^3}$. Because $\nu-1<b^k+\displaystyle{\sum_{i=0}^yb^{r-2i}}$ and $\nu-1\equiv b^k+\displaystyle{\sum_{i=0}^yb^{r-2i}}\equiv 0\pmod b$, it follows that $\nu-1\leq b^k-b+\displaystyle{\sum_{i=0}^yb^{r-2i}}$. This means that $\nu+b\in I(k,r,y)$, so $s_b(\nu+b)\leq s_b(\nu)$ by the definition of $\nu$. As $\nu+b\equiv b+1\pmod{b^3}$, we find by \eqref{Eq1} and Lemma \ref{Lem2} that \[s_b(\nu+b)=s_b\left(\frac{(\nu+b)-1}{b}\right)+s_b\left(\frac{(\nu+b)+b^2-b-1}{b^2}\right)\] and \[s_b\left(\frac{\nu-1}{b^2}+1\right)=s_b\left(\frac{\left(\frac{\nu-1}{b^2}+1\right)-1}{b}\right)+s_b\left(\frac{\left(\frac{\nu-1}{b^2}+1\right)-1}{b}+1\right)\] \[>s_b\left(\frac{\left(\frac{\nu-1}{b^2}+1\right)-1}{b}\right)=s_b\left(\frac{\nu-1}{b^3}\right)=s_b\left(\frac{\nu-1}{b}\right).\] Therefore,
\[s_b(\nu+b)=s_b\left(\frac{(\nu+b)-1}{b}\right)+s_b\left(\frac{(\nu+b)+b^2-b-1}{b^2}\right)\] \[=s_b\left(\frac{\nu-1}{b}+1\right)+s_b\left(\frac{\nu-1}{b^2}+1\right)\] \[>s_b\left(\frac{\nu-1}{b}+1\right)+s_b\left(\frac{\nu-1}{b}\right)=s_b(\nu).\] With this contradiction, the proof is complete.   
\end{proof} 
\begin{lemma} \label{Lem10}
Preserving the notation from Lemma \ref{Lem9}, we have $e_{\ell-1-\lambda}=r-2\lambda-1$. 
\end{lemma}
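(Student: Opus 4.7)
The plan is to argue by contradiction: suppose $e_{\ell-1-\lambda}\leq r-2\lambda-2$, and I will exhibit $\hat\nu\in I(k,r,y)$ with $\hat\nu>\nu$ and $s_b(\hat\nu)>s_b(\nu)$, contradicting $s_b(\nu)=\mu(k,r,y)$. I first use Lemma \ref{Lem9}, together with Lemma \ref{Lem4} and the minimality of $\lambda$, to pin down the entire $d$-sequence of $\nu-1$. The alternating property $a_i\neq a_{i+1}$ on $\{1,\ldots,e_{\ell-1-\lambda}\}$, combined with $a_0=0$ and $a_{e_{\ell-1-\lambda}}=1$, forces the exponents $e_0<e_1<\cdots<e_{\ell-1-\lambda}$ to form an arithmetic progression of common difference $2$, so $d_j=1$ for every $j\in\{1,\ldots,\ell-1-\lambda\}$. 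The minimality of $\lambda$ pins $e_{\ell-1-j}=r-2j$ for $j<\lambda$, so $d_j=1$ also for $j\in\{\ell-\lambda+1,\ldots,\ell-1\}$, $d_\ell=k-r-1\geq 1$, and $d_{\ell-\lambda}=r-2\lambda+1-e_{\ell-1-\lambda}\geq 3$.

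I define $\hat\nu$ by replacing the exponent $e_{\ell-1-\lambda}$ with $r-2\lambda-1$, and set $s:=r-2\lambda-1-e_{\ell-1-\lambda}\geq 1$. A short estimate on the geometric sum of powers of $b$ confirms $\hat\nu\in I(k,r,y)$ for every $b\geq 2$. Since $\hat\nu>\nu$, the minimality of $\nu$ forces $s_b(\hat\nu)\leq s_b(\nu)$, and a contradiction will follow from proving $s_b(\hat\nu)>s_b(\nu)$. By Theorem \ref{Thm2},
\[
s_b(\hat\nu)-s_b(\nu)=\Tr\bigl(B\,(M_{1+s}M_2-M_1M_{s+2})\,C\bigr),
\]
where $B=N_{e_0}M_1^{\ell-2-\lambda}$ and $C=M_1^{\lambda-1}M_{k-r-1}$ collect the unaltered left and right factors. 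A direct calculation yields
\[
M_{1+s}M_2-M_1M_{s+2}=\begin{pmatrix}0&2s&0\\0&2s&0\\-s&0&-s\end{pmatrix}.
\]

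The hard part is the signs: this difference matrix has negative entries, so Lemma \ref{Lem7} does not apply directly. The decisive observation is that the third row of $B$ is identically zero, because $N_{e_0}$ has zero third row and this feature is preserved under right multiplication; consequently the negative entries contribute nothing to the trace. Writing $B$ in its resulting form (with $q>0$ by direct inspection) and $C$ in its $\Xi$-form $\bigl(\begin{smallmatrix}r_1&w_C&r_1\\r_1&w_C&r_1\\r_2&r_3&r_2\end{smallmatrix}\bigr)$, the trace collapses to $s\bigl[(r_1+w_C)(p_1+2q)-p_1(r_2+r_3)\bigr]$; a short induction on $\lambda$ using the recursion $C_j=M_1C_{j-1}$ supplies the monotonicity $r_1\geq r_2$, $w_C\geq r_3$, and $r_1+w_C>0$, so the bracket is at least $2q(r_1+w_C)>0$, the desired contradiction. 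The two degenerate configurations $\lambda=0$ (no saturated tail: $C$ collapses to $M_{k-r-1}$ and the difference matrix gains a nonnegative term $s(k-r-2)$ in its top block) and $\ell-1-\lambda=0$ (the modification now rewrites $N_{e_0}$ as $N_{e_0+s}$, and one exploits $e_0\in\{1,2\}$ supplied by the parity in Lemma \ref{Lem9}) are handled by exactly the same template with minor bookkeeping changes.
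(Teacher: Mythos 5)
Your argument is correct, but it is not the route the paper takes; the two proofs differ in the choice of perturbation. The paper's proof sets $\nu'=1+\sum_{i=0}^{\ell-1-\lambda}b^{e_i+1}+\sum_{i=\ell-\lambda}^{\ell}b^{e_i}$ and $\nu''=\nu'+b$ --- that is, it shifts the \emph{entire} alternating low block up by one position (appending a new term $b^1$ via $\nu''$ when $e_0=2$) --- and then reuses the identities $N_2M_1^tM_{q-1}-N_1M_1^tM_q=F_{2t+2}N_{q-2}$ and $N_1M_1^{t+1}M_{q-1}-N_2M_1^tM_q=F_{2t+3}N_{q-2}$ together with $q\geq 3$ and Lemma \ref{Lem7}. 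The payoff of that route is that the difference of transfer-matrix products is a nonnegative matrix with positive second diagonal entry, so positivity of the trace is immediate; the cost is the case split on $e_0\in\{1,2\}$ between $\nu'$ and $\nu''$. Your perturbation is more local --- you move only the single offending exponent $e_{\ell-1-\lambda}$ up to $r-2\lambda-1$ --- which makes the membership $\hat\nu\in I(k,r,y)$ and the inequality $\hat\nu>\nu$ cleaner and avoids the $e_0$ dichotomy in the main case, but it produces a difference matrix with negative entries, so you must compensate with the two structural facts you identify: the vanishing third row of $N_{e_0}M_1^{\ell-2-\lambda}$ and the row-domination $r_1\geq r_2$, $w_C\geq r_3$ inherited by products of the $M_u$. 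I checked the key computations (the matrix $M_{1+s}M_2-M_1M_{s+2}$, the collapsed trace $s\left[(r_1+w_C)(p_1+2q)-p_1(r_2+r_3)\right]\geq 2q(r_1+w_C)>0$, and $\hat\nu-1<b^k+\sum_{i=0}^{\lambda}b^{r-2i}$) and they are right; the degenerate cases $\lambda=0$ and $\ell-1-\lambda=0$ also close as you claim, though you should write them out in full, since there the lower bound degrades (for instance, when the modified exponent is $e_0$ itself with $e_0=2$ and $\lambda=1$ the trace reduces to $s(r_1-r_2+w_C-r_3)$ with $w_C-r_3=0$, and strict positivity rests on $r_1-r_2=k-r-1\geq 1$), so the strictness deserves an explicit word rather than ``minor bookkeeping.''
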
 
\begin{proof} 
The proof is very similar to those of the first and third cases considered in the proof of Lemma \ref{Lem9}. Suppose, by way of contradiction, that $e_{\ell-1-\lambda}\leq r-2\lambda-2$, and let $\nu'=1+\displaystyle{\sum_{i=0}^{\ell-1-\lambda}b^{e_i+1}+\sum_{i=\ell-\lambda}^\ell b^{e_i}}$ and $\nu''=\nu'+b$. We have \[\nu'-1<\nu''-1=b+\sum_{i=0}^{\ell-1-\lambda}b^{e_i+1}+\sum_{i=\ell-\lambda}^\ell b^{e_i}\leq\sum_{n=1}^{e_{\ell-1-\lambda}+1}b^n+\sum_{i=\ell-\lambda}^\ell b^{e_i}\] \[\leq\sum_{n=1}^{r-2\lambda-1}b^n+\sum_{i=\ell-\lambda}^\ell b^{e_i}<b^{r-2\lambda}+\sum_{i=\ell-\lambda}^\ell b^{e_i}=b^{r-2\lambda}+b^k+\sum_{i=0}^{\lambda-1}b^{r-2i}\] \[=b^k+\sum_{i=0}^{\lambda}b^{r-2i}\leq b^k+\sum_{i=0}^yb^{r-2i},\] where we convene to let $\displaystyle{\sum_{i=0}^{\lambda-1}b^{r-2i}=0}$ if $\lambda=0$. Therefore, $\nu',\nu''\in I(k,r,y)$. This means that $s_b(\nu'),s_b(\nu'')\leq s_b(\nu)$, so we will derive a contradiction by showing that $s_b(\nu')>s_b(\nu)$ or $s_b(\nu'')>s_b(\nu)$.
\par 
Let $B=M_{d_{\ell-\lambda+1}}M_{d_{\ell-\lambda+2}}\cdots M_{d_\ell}$, where we define $B$ to be the $3\times 3$ identity matrix if $\lambda=0$. By Lemma \ref{Lem6}, $B\in\Xi'$. To ease our notation, let $t=\ell-1-\lambda$ and $q=d_{\ell-\lambda}$. We have \[s_b(\nu)=\Tr(N_{e_0}M_1^tM_qB),\] \[s_b(\nu')=\Tr(N_{e_0+1}M_1^tM_{q-1}B),\] and \[s_b(\nu'')=\Tr(N_1M_{e_0-1}M_1^tM_{q-1}B).\] It is straightforward to show that $N_2M_1^tM_{q-1}-N_1M_1^tM_q=F_{2t+2}N_{q-2}$ and $N_1M_1^{t+1}M_{q-1}-N_2M_1^tM_q=F_{2t+3}N_{q-2}$. If $\lambda=0$, then it follows from our assumption that $e_{\ell-1-\lambda}\leq r-2\lambda-2$ that \[q=e_{\ell-\lambda}-e_{\ell-1-\lambda}-1=e_{\ell}-e_{\ell-1}-1=k-e_{\ell-1}-1\geq k-(r-2)-1\geq 3.\] If $\lambda>0$, then \[q=e_{\ell-\lambda}-e_{\ell-1-\lambda}-1=e_{\ell-1-(\lambda-1)}-e_{\ell-1-\lambda}-1=(r-2(\lambda-1))-e_{\ell-1-\lambda}-1\] \[\geq (r-2(\lambda-1))-(r-2\lambda-2)-1=3.\] In either case, $q\geq 3$, so both $F_{2t+2}N_{q-2}$ and $F_{2t+3}N_{q-2}$ are $3\times 3$ matrices with nonnegative real entries whose second diagonal entries are positive. It follows from Lemma \ref{Lem7} that if $e_0=1$, then \[s_b(\nu')-s_b(\nu)=\Tr((N_2M_1^tM_{q-1}-N_1M_1^tM_q)B)=\Tr(F_{2t+2}N_{q-2}B)>0.\] Similarly, if $e_0=2$, then \[s_b(\nu'')-s_b(\nu)=\Tr((N_1M_1^{t+1}M_{q-1}-N_2M_1^tM_q)B)=\Tr(F_{2t+3}N_{q-2}B)>0.\] This is a contradiction, so the proof is complete.  
\end{proof}
For nonnegative integers $k,r,y$ with $2y<r<k-1$, let $w=a_ka_{k-1}\cdots a_0$ be the ordinary base $b$ expansion of $\nu(k,r,y)-1$. We summarize here the information we have gained about this expansion. From Lemma \ref{Lem4}, we know that $a_{i}\in\{0,1\}$ for all $i$. We know from the same lemma that $a_k=1$, $a_0=0$, and $a_j=0$ for all $j\in\{r+1,\ldots,k-1\}$. Therefore, $w=10^{k-r-1}v$, where $v$ is a word of length $r+1$ over the binary alphabet $\{0,1\}$ that ends in the letter (digit) $0$. From Lemmas \ref{Lem9} and \ref{Lem10} and \eqref{Eq6}, we find that there exists some $\lambda\in\{0,1,\ldots,y\}$ such that 
\begin{enumerate}[A.]
\item{$a_{r-2i}=1$ and $a_{r-2i-1}=0$ for all $i\in\{0,1,\ldots,\lambda-1\}$.} 
\item{$a_{r-2\lambda}=0$.} 
\item{$a_{r-2\lambda-2i+1}=1$ and $a_{r-2\lambda-2i}=0$ for all $i\in\{1,2,\ldots,\left\lfloor r/2\right\rfloor-\lambda\}$.}
\end{enumerate} 
That is, \[v=\begin{cases} (10)^\lambda 0(10)^{\left\lfloor r/2\right\rfloor-\lambda}0, & \mbox{if } 2\nmid r; \\ (10)^\lambda 0(10)^{\left\lfloor r/2\right\rfloor-\lambda}, & \mbox{if } 2\vert r. \end{cases}\]
This means that 
\begin{equation} \label{Eq7} 
w=\begin{cases} 10^{k-r-1}(10)^\lambda 0(10)^{\left\lfloor r/2\right\rfloor-\lambda}0, & \mbox{if } 2\nmid r; \\ 10^{k-r-1}(10)^\lambda 0(10)^{\left\lfloor r/2\right\rfloor-\lambda}, & \mbox{if } 2\vert r. \end{cases}
\end{equation} This leads us to the following.
\begin{definition} \label{Def6}
Let $k,r,x$ be nonnegative integers such that $2x<r<k-1$. Define the word $w(k,r,x)$ by \[w(k,r,y)=\begin{cases} 10^{k-r-1}(10)^x0(10)^{\left\lfloor r/2\right\rfloor-x} 0, & \mbox{if } 2\nmid r; \\ 10^{k-r-1}(10)^x 0(10)^{\left\lfloor r/2\right\rfloor-x}, & \mbox{if } 2\vert r, \end{cases}\] and let $\gamma(k,r,x)$ be the positive integer whose ordinary base $b$ expansion is $w(k,r,x)$.
\end{definition}
\begin{definition} \label{Def7} 
For any integers $k,r,y$, define \[V(k,r,x)=\frac 15((k-r)(2L_{r+2}-L_{r-4x+1})+2L_{r+1}+L_{r-4x+2}).\] 
\end{definition} 
\begin{lemma} \label{Lem11} 
Let $k,r,x,y$ be nonnegative integers such that $2x\leq 2y<r<k-1$. We have \[\gamma(k,r,x)+1\in I(k,r,y)\] and \[s_b(\gamma(k,r,x)+1)=V(k,r,x).\] 
\end{lemma}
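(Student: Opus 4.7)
The plan is to verify the two assertions in turn. For the membership $\gamma(k,r,x)+1\in I(k,r,y)$, I would work directly from the ordinary base $b$ expansion $w(k,r,x)$ given in Definition \ref{Def6}. The leading $1$ at position $k$ gives $\gamma(k,r,x)+1>b^k$ immediately. For the upper bound, I would split the digits of $w(k,r,x)$ below position $k$ into an ``upper block'' contributing $\sum_{i=0}^{x-1}b^{r-2i}$ and a ``lower block'' contributing $b^{r-2x-1}+b^{r-2x-3}+\cdots$, and observe that the lower block plus $1$ is bounded by $b^{r-2x-1}+b^{r-2x-2}+\cdots+1<b^{r-2x}$; combined with $x\leq y$, this yields $\gamma(k,r,x)+1\leq b^k+\sum_{i=0}^{x-1}b^{r-2i}+b^{r-2x}\leq b^k+\sum_{i=0}^yb^{r-2i}$, giving membership.

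For the formula $s_b(\gamma(k,r,x)+1)=V(k,r,x)$, I would invoke Theorem \ref{Thm2}, which counts the base $b$ over-expansions of $\gamma(k,r,x)$. Reading the $1$-positions of $w(k,r,x)$ from low to high, the exponent sequence $e_0<e_1<\cdots<e_\ell=k$ breaks into a \emph{lower block} at $2,4,\ldots,r-2x-1$ (when $r$ is odd) or $1,3,\ldots,r-2x-1$ (when $r$ is even), an \emph{upper block} at $r-2x+2,r-2x+4,\ldots,r$, and the top exponent $k$. The resulting gaps $d_i=e_i-e_{i-1}-1$ form a run of $p-1$ ones, a single $2$ at the transition between the two blocks, another run of $x-1$ ones, and a final gap $d_\ell=k-r-1$, where $p$ is the size of the lower block. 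Thus Theorem \ref{Thm2} gives $s_b(\gamma(k,r,x)+1)=\Tr(N_{e_0}M_1^{p-1}M_2M_1^{x-1}M_{k-r-1})$, with $e_0\in\{1,2\}$ determined by the parity of $r$.

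To evaluate this trace I would use Lemma \ref{Lem8} to replace each power of $M_1$ by its Fibonacci-matrix expansion, multiply out, and sum the diagonal. After simplification the answer is linear in $k-r$, of the form $(k-r)\alpha+\beta$, where $\alpha$ and $\beta$ are sums of products of Fibonacci numbers indexed near $2p$, $2x$, and $2(p+x)$. The standard identities $L_m=F_{m-1}+F_{m+1}$, $5F_m=L_{m-1}+L_{m+1}$, and $2F_{m+n}+(-1)^nF_{m-n}=F_mL_n+L_mF_n$ then recast this into the Lucas form $\frac{1}{5}\bigl((k-r)(2L_{r+2}-L_{r-4x+1})+2L_{r+1}+L_{r-4x+2}\bigr)=V(k,r,x)$. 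The main obstacle is this algebraic bookkeeping, compounded by the degenerate cases $x=0$ (no upper block or $M_2$ factor, so the product becomes $N_{e_0}M_1^{p-1}M_{k-r}$ and the final gap is $k-r$ rather than $k-r-1$) and $p=0$ (only when $r=2x+1$ is odd, giving $N_3M_1^{x-1}M_{k-r-1}$); each requires the convention $M_1^0=I_3$ in place of Lemma \ref{Lem8}'s formula, and a parallel calculation that collapses into the same Lucas-number expression.
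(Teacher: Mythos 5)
Your proposal follows essentially the same route as the paper: membership is verified by direct comparison of base-$b$ expansions, and the formula is obtained by reading off the gap sequence of $w(k,r,x)$, applying Theorem \ref{Thm2} to get $\Tr(N_{e_0}M_1^{p-1}M_2M_1^{x-1}M_{k-r-1})$ (with the same separate treatment of $x=0$, where the product is $N_{e_0}M_1^{p-1}M_{k-r}$), and then converting to Lucas numbers via Lemma \ref{Lem8} and standard Fibonacci--Lucas identities. You are in fact slightly more careful than the paper in flagging the degenerate case $p=0$ (possible only for $r=2x+1$ odd), where the lower block is empty and $e_0=3$ rather than the value $2$ asserted in the paper's general setup.
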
 
\begin{proof} 
The ordinary base $b$ expansion of $b^k+\displaystyle{\sum_{i=0}^yb^{r-2i}}$ is \[10^{k-r-1}(10)^{y+1}0^{r-2y-1},\] so one may see from Definition \ref{Def6} and the hypothesis that $x\leq y$ that $\gamma(k,r,x)+1\leq b^k+\displaystyle{\sum_{i=0}^yb^{r-2i}}$. 
It should also be clear from Definition \ref{Def6} that $b^k<\gamma(k,r,x)+1$, so $\gamma(k,r,x)+1\in I(k,r,y)$. If we choose to write $\gamma(k,r,x)=\displaystyle{\sum_{i=0}^\ell b^{e_i}}$ for nonnegative integers $e_0,e_1,\ldots,e_\ell$ with $e_0<e_1\cdots<e_\ell$, then \[e_0=\begin{cases} 2, & \mbox{if } 2\nmid r; \\ 1, & \mbox{if } 2\vert r, \end{cases}\] \[e_j=e_0+2j\text{ for all }j\in\{1,2,\ldots,t\},\] \[e_{t+j}=r-2x+2j=e_0+2t+2j+1\text{ for all }j\in\{1,2,\ldots,x\},\] and \[e_\ell=k,\] where $t=\left\lfloor r/2\right\rfloor-x-1$. Observe that 
the letter (digit) $1$ appears exactly $\left\lfloor r/2\right\rfloor+1$ times in the word $w(k,r,x)$, so it follows from \eqref{Eq6} that $\ell=\left\lfloor r/2\right\rfloor$ (implying that $t+1=\ell-x$). Let us first assume $x=0$. Setting $d_j=e_j-e_{j-1}-1$ for all $j\in\{1,2,\ldots,\ell\}$, we have 
\[d_j=\begin{cases} 1, & \mbox{if } j\in\{1,2,\ldots,\ell-1\};\\ k-r, & \mbox{if } j=\ell. \end{cases}\] We may invoke Theorem \ref{Thm2} to see that 
\begin{equation} \label{Eq28}
s_b(\gamma(k,r,0)+1)=\Tr(N_{e_0}M_{d_1}M_{d_2}\cdots M_{d_\ell})=\Tr(N_{e_0}M_1^{\ell-1}M_{k-r}).
\end{equation} Let \[T=\left( \begin{array}{ccc}
F_r & F_{r-1} & F_r \\ 
F_r & F_{r-1} & F_r \\
0 & 0 & 0 \end{array} \right).\] If $2\vert r$, we may use Lemma \ref{Lem8} to find that \[N_{e_0}M_1^{\ell-1}=N_1M_1^{\frac 12(r-2)}=T.\] Similarly, if $2\nmid r$, then \[N_{e_0}M_1^{\ell-1}=N_2M_1^{\frac 12(r-3)}=T.\] Therefore, regardless of the parity of $r$, we have $s_b(\gamma(k,r,0)+1)=\Tr(TM_{k-r})$ by \eqref{Eq28}. Thus, \[s_b(\gamma(k,r,0)+1)=\Tr(TM_{k-r})=(k-r)F_{r+1}+F_{r+2},\] where the last equality is obtained by simply calculating the matrix $TM_{k-r}$ by hand. One may easily show that $F_{r+1}=\dfrac 15(2L_{r+2}-L_{r+1})$ and $F_{r+2}=\dfrac 15(2L_{r+1}+L_{r+2})$, so \[s_b(\gamma(k,r,0)+1)=\frac 15((k-r)(2L_{r+2}-L_{r+1})+2L_{r+1}+L_{r+2})=V(k,r,0).\] 
\par 
Let us now assume $x>0$. Again, we set $d_j=e_j-e_{j-1}-1$ for all $j\in\{1,2,\ldots,\ell\}$, and we see that \[d_j=\begin{cases} 1, & \mbox{if } j\in\{1,2,\ldots,\ell-1\}\setminus\{t+1\}; \\ 2, & \mbox{if } j=t+1; \\ k-r-1, & \mbox{if } j=\ell. \end{cases}\] By Theorem \ref{Thm2}, \[s_b(\gamma(k,r,x)+1)=\Tr(N_{e_0}M_{d_1}M_{d_2}\cdots M_{d_\ell})\] \[=\Tr(N_{e_0}M_1^tM_2M_1^{\ell-1-(t+1)}M_{k-r-1})=\Tr(N_{e_0}M_1^tM_2M_1^{x-1}M_{k-r-1}).\] 
If $2\vert r$, then Lemma \ref{Lem8} allows us to find that \[N_{e_0}M_1^t=N_1M_1^{r/2-x-1}=\left( \begin{array}{ccc}
F_{r-2x} & F_{r-2x-1} & F_{r-2x} \\ 
F_{r-2x} & F_{r-2x-1} & F_{r-2x} \\
0 & 0 & 0 \end{array} \right).\] Similarly, if $2\nmid r$, then \[N_{e_0}M_1^t=N_2M_1^{(r-1)/2-x-1}=\left( \begin{array}{ccc}
F_{r-2x} & F_{r-2x-1} & F_{r-2x} \\ 
F_{r-2x} & F_{r-2x-1} & F_{r-2x} \\
0 & 0 & 0 \end{array} \right).\] Therefore, no matter the parity of $r$, we have \[N_{e_0}M_1^tM_2M_1^{x-1}M_{k-r-1}=\left( \begin{array}{ccc}
F_{r-2x} & F_{r-2x-1} & F_{r-2x} \\ 
F_{r-2x} & F_{r-2x-1} & F_{r-2x} \\
0 & 0 & 0 \end{array} \right)M_2M_1^{x-1}M_{k-r-1}.\]
Using Lemma \ref{Lem8} again, we find that \[\left( \begin{array}{ccc}
F_{r-2x} & F_{r-2x-1} & F_{r-2x} \\ 
F_{r-2x} & F_{r-2x-1} & F_{r-2x} \\
0 & 0 & 0 \end{array} \right)M_2M_1^{x-1}M_{k-r-1}=\left( \begin{array}{ccc}
\chi_1 & \chi_2 & \chi_1 \\ 
\chi_1 & \chi_2 & \chi_1 \\
0 & 0 & 0 \end{array} \right),\] where \[\chi_1=(k-r)(F_{r-2x+3}F_{2x}+F_{r-2x+1}F_{2x-1})-F_{r-2x+3}F_{2x-2}-F_{r-2x+1}F_{2x-3}\] and \[\chi_2=F_{r-2x+3}F_{2x}+F_{r-2x+1}F_{2x-1}.\] Therefore, \[s_b(\gamma(k,r,x)+1)=\Tr(N_{e_0}M_1^tM_2M_1^{x-1} M_{k-r-1})=\chi_1+\chi_2\] \[=(k-r)(F_{r-2x+3}F_{2x}+F_{r-2x+1}F_{2x-1})+F_{r-2x+3}F_{2x-1}+F_{r-2x+1}F_{2x-2}.\] We now make use of the well-known identity \[F_mF_n=\frac 15(L_{m+n}-(-1)^nL_{m-n})\] to obtain \[F_{r-2x+3}F_{2x}+F_{r-2x+1}F_{2x-1}=\frac 15(L_{r+3}-L_{r-4x+3}+L_r+L_{r-4x+2})\] \[=\frac 15(L_{r+2}+L_{r+1}+L_r-L_{r-4x+1})=\frac 15(2L_{r+2}-L_{r-4x+1})\] and \[F_{r-2x+3}F_{2x-1}+F_{r-2x+1}F_{2x-2}=\frac 15(L_{r+2}+L_{r-4x+4}+L_{r-1}-L_{r-4x+3})\] \[=\frac 15(L_{r+1}+L_r+L_{r-1}+L_{r-4x+2})=\frac 15(2L_{r+1}+L_{r-4x+2}).\] Consequently, \[s_b(\gamma(k,r,x)+1)=(k-r)\left(\frac 15(2L_{r+2}-L_{r-4x+1})\right)+\frac 15(2L_{r+1}+L_{r-4x+2})\] \[=V(k,r,x).\] 
\end{proof}   
Before proceeding to our main result, we need one final easy lemma. 
\begin{lemma} \label{Lem12} 
For any integers $k,r,x$, \[V(k,r,x+1)-V(k,r,x)=\frac15((k-r)(L_{r-4x+1}-L_{r-4x-3})+L_{r-4x-2}-L_{r-4x+2}).\]
\end{lemma}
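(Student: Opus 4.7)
The plan is to prove this identity by direct substitution into Definition \ref{Def7}, so there is essentially no obstacle beyond bookkeeping; the content of the lemma is just that in $V(k,r,x)$, the parameter $x$ appears only inside the two Lucas-number subscripts $r-4x+1$ and $r-4x+2$, and those are the only terms that change when $x$ is incremented.

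Concretely, I would first write out
\[
V(k,r,x+1)=\tfrac{1}{5}\bigl((k-r)(2L_{r+2}-L_{r-4(x+1)+1})+2L_{r+1}+L_{r-4(x+1)+2}\bigr),
\]
and simplify the subscripts as $r-4(x+1)+1=r-4x-3$ and $r-4(x+1)+2=r-4x-2$, giving
\[
V(k,r,x+1)=\tfrac{1}{5}\bigl((k-r)(2L_{r+2}-L_{r-4x-3})+2L_{r+1}+L_{r-4x-2}\bigr).
\]

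Next I would subtract $V(k,r,x)$ term-by-term. The quantities $(k-r)\cdot 2L_{r+2}$ and $2L_{r+1}$ cancel between the two expressions, leaving only the contributions from the $x$-dependent Lucas subscripts. Collecting signs carefully gives
\[
V(k,r,x+1)-V(k,r,x)=\tfrac{1}{5}\bigl((k-r)(L_{r-4x+1}-L_{r-4x-3})+L_{r-4x-2}-L_{r-4x+2}\bigr),
\]
which is exactly the asserted identity. Since Definition \ref{Def7} extends $V$ to arbitrary integers $k,r,x$, no parity or range restrictions are needed, and the proof requires no use of Fibonacci or Lucas recurrences at all.
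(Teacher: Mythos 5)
Your proposal is correct and follows exactly the same route as the paper: substitute $x+1$ into Definition \ref{Def7}, simplify the subscripts to $r-4x-3$ and $r-4x-2$, cancel the $x$-independent terms $(k-r)\cdot 2L_{r+2}$ and $2L_{r+1}$, and collect the remainder. Nothing is missing.
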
 
\begin{proof} 
Referring to Definition \ref{Def7}, we have \[V(k,r,x+1)=A+\frac 15((k-r)(-L_{r-4(x+1)+1})+L_{r-4(x+1)+2})\] and \[V(k,r,x)=A+\frac 15((k-r)(-L_{r-4x+1})+L_{r-4x+2}),\] where $A=\dfrac15((k-r)(2L_{r+2})+2L_{r+1})$. Thus, 
\[V(k,r,x+1)-V(k,r,x)=\frac15((k-r)(L_{r-4x+1}-L_{r-4x-3})+L_{r-4x-2}-L_{r-4x+2}).\]
\end{proof} 
We are finally in a position to prove our main result. 
\begin{theorem} \label{Thm3} 
Let $k,r,y$ be nonnegatve integers such that $2y<r<k-1$. If $r$ is even, then \[\nu(k,r,y)=\gamma(k,r,y)+1\text{ and }\mu(k,r,y)=V(k,r,y).\] If $k-2=r\equiv3\pmod 4$ and $y\geq\dfrac{r+1}{4}$, then \[\nu(k,r,y)=\gamma(k,r,(r-3)/4)+1\text{ and }\mu(k,r,y)=V(k,r,(r-3)/4).\] 
Otherwise, \[\nu(k,r,y)=\gamma(k,r,\delta(r,y))+1\text{ and }\mu(k,r,y)=V(k,r,\delta(r,y)),\] where $\delta(r,y)=\min\left\{\left\lceil\frac{r-1}{4}\right\rceil,y\right\}$.
\end{theorem}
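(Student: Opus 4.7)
The plan is to reduce Theorem \ref{Thm3} to a discrete maximization of $V(k,r,x)$ over $x\in\{0,1,\ldots,y\}$. From \eqref{Eq7} (which summarizes Lemmas \ref{Lem4}, \ref{Lem9}, and \ref{Lem10}), $\nu(k,r,y)=\gamma(k,r,\lambda)+1$ for some $\lambda\in\{0,\ldots,y\}$, and Lemma \ref{Lem11} says every $\gamma(k,r,x)+1$ with $x\in\{0,\ldots,y\}$ lies in $I(k,r,y)$ and has $s_b$-value $V(k,r,x)$. Hence $\mu(k,r,y)=\max_{0\le x\le y}V(k,r,x)$, and by Definition \ref{Def3} $\nu(k,r,y)=\gamma(k,r,\lambda^*)+1$ where $\lambda^*$ is the smallest $x\in\{0,\ldots,y\}$ realizing this maximum. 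A direct reading of Definition \ref{Def6} gives $\gamma(k,r,x+1)-\gamma(k,r,x)=b^{r-2x-1}(b-1)>0$, so smallest-$\lambda^*$ really does correspond to smallest-$\nu$.

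I would then simplify Lemma \ref{Lem12} using the Fibonacci--Lucas identities $L_{m+1}+L_{m-1}=5F_m$ and $L_m+L_{m-2}=5F_{m-1}$ (both easily proved from Binet's formulas). Rewriting each of $L_{r-4x+1}-L_{r-4x-3}$ and $L_{r-4x+2}-L_{r-4x-2}$ as a difference $L_{m+2}-L_{m-2}=L_{m+1}+L_{m-1}=5F_m$ with an appropriate $m$, the $\tfrac{1}{5}$ prefactor cancels and the increment collapses to
\begin{equation*}
V(k,r,x+1)-V(k,r,x)=(k-r)F_{r-4x-1}-F_{r-4x}.
\end{equation*}
Setting $n=r-4x$, the increment question becomes the sign of $(k-r)F_{n-1}-F_n$. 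Using $F_{-m}=(-1)^{m+1}F_m$ and the standing hypothesis $k-r\ge 2$, I would tabulate: for $n\ge 4$ the ratio $F_n/F_{n-1}$ is strictly less than $2$, so the increment is strictly positive; at $n=3$ it equals $(k-r)-2$, positive when $k-r\ge 3$ and exactly zero when $k-r=2$; at $n=2$ it equals $(k-r)-1>0$; at $n=1$ it equals $-1<0$; at $n=0$ it equals $k-r>0$; and for every $n\le -1$ it is strictly positive when $n$ is even and strictly negative when $n$ is odd.

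Because $n=r-4x$ is constant modulo $4$, the analysis now splits by $r\bmod 4$. If $r$ is even, every accessible $n$ is even, every increment is strictly positive, and the maximum occurs at $x=y$; this yields the first clause. If $r\equiv 1\pmod 4$, the increments are strictly positive until $n$ drops to $1$ at $x=(r-1)/4$ and strictly negative thereafter, so the max sits at $\min\{(r-1)/4,y\}=\delta(r,y)$. If $r\equiv 3\pmod 4$ and $k-r\ge 3$, the increments stay positive through $n=3$ and turn strictly negative from $n=-1$ on, putting the max at $\min\{(r+1)/4,y\}=\delta(r,y)$. Finally, if $r\equiv 3\pmod 4$ and $k-r=2$, the increment at $n=3$ vanishes, producing the tie $V(k,r,(r-3)/4)=V(k,r,(r+1)/4)$; when $y\ge (r+1)/4$ both are in range, and the minimality convention picks $\lambda^*=(r-3)/4$, which is exactly the exceptional clause.

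The only step requiring real care is the sign analysis at the borderline indices $n\in\{2,3\}$, where the hypothesis $k-r\ge 2$ is used almost tightly; this tightness is precisely what manufactures the exceptional case $k-2=r\equiv 3\pmod 4$. Everything else---the algebraic reduction via Fibonacci--Lucas identities, the four-residue dichotomy, and the tie-breaking via monotonicity of $\gamma$---is then routine bookkeeping once the reduction to optimizing $V$ is in place.
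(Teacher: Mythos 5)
Your proposal is correct and follows essentially the same route as the paper: reduce to maximizing $V(k,r,x)$ over $x\in\{0,1,\ldots,y\}$ via Lemmas \ref{Lem4}--\ref{Lem11}, analyze the sign of the increment from Lemma \ref{Lem12}, and split on $r\bmod 4$, with the borderline $k-r=2$ producing the exceptional tie broken by the monotonicity of $x\mapsto\gamma(k,r,x)$. Your rewriting of the increment as $(k-r)F_{r-4x-1}-F_{r-4x}$ is a correct and cleaner way to run the sign analysis than the paper's direct Lucas-number estimates; the only sub-case you leave implicit is $k-2=r\equiv 3\pmod 4$ with $y<\frac{r+1}{4}$, which falls under the theorem's ``otherwise'' clause and follows immediately from your table because every increment involved then occurs at an index $n=r-4x\geq 7$.
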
 
\begin{proof} 
We saw in the discussion immediately preceding Definition \ref{Def6} that $\nu(k,r,y)=\gamma(k,r,\lambda)+1$ for some $\lambda\in\{0,1,\ldots,y\}$; we simply need to determine the value of $\lambda$. By Definition \ref{Def3}, $\lambda$ is the element of $\{0,1,\ldots,y\}$ such that $s_b(\gamma(k,r,\lambda)+1)$ is maximized and $\gamma(k,r,\lambda)+1\in I(k,r,y)$. Lemma \ref{Lem11} tells us that $\gamma(k,r,x)+1\in I(k,r,y)$ and $s_b(\gamma(k,r,x)+1)=V(k,r,x)$ for all $x\in\{0,1,\ldots,y\}$, so \[\mu(k,r,y)=\max\{V(k,r,x)\colon x\in\{0,1,\ldots,y\}\}.\] That is, $\lambda$ must be the element of the set $\{0,1,\ldots,y\}$ that maximizes $V(k,r,\lambda)$ (if there are multiple such elements, we choose the smallest in accordance with the definition of $\nu(k,r,y)$ in Definition \ref{Def3}).  
We first assume $r$ is even. We wish to show that $\lambda=y$. This is immediate if $y=0$, so we will assume $y>0$. Choose some $x\in\{0,1,\ldots,y-1\}$.  
We make use of an easily-proven fact about Lucas numbers that states that if $m,n$ are odd integers (not necessarily positive) with $m<n$, then $L_m<L_n$. In this case, we use the assumption that $r$ is even to see that $r-4x-3$ and $r-4x+1$ are odd integers with $r-4x-3<r-4x+1$. Therefore, $L_{r-4x+1}-L_{r-4x-3}>0$. Because $k-r\geq 2$ by hypothesis, we may use Lemma \ref{Lem12} to find that \[V(k,r,x+1)-V(k,r,x)\geq \frac 15(2(L_{r-4x+1}-L_{r-4x-3})+L_{r-4x-2}-L_{r-4x+2})\] \[=\frac 15(L_{r-4x-1}-L_{r-4x-5}).\] As $r-4x-5$ and $r-4x-1$ are odd integers with $r-4x-5<r-4x-1$, we see that \[V(k,r,x+1)-V(k,r,x)>0.\] Because $x$ was arbitrary, this shows that \[V(k,r,0)<V(k,r,1)<\cdots<V(k,r,y),\] so $\lambda=y$. 
\par 
We now assume $r$ is odd. Let $x_1,x_2$ be integers such that $x_1\leq\dfrac{r-5}{4}<\dfrac{r-1}{4}\leq x_2$. We will make use of the fact that $L_m=L_{-m}$ for any even integer $m$. Because $x_1\leq\dfrac{r-5}{4}$, we obtain the inequalities $r-4x_1+1>r-4x_1-3>0$. Therefore, $L_{r-4x_1+1}-L_{r-4x_1-3}>0$. Because $k-r\geq 2$, we see from Lemma \ref{Lem12} that \[V(k,r,x_1+1)-V(k,r,x_1)\geq \frac 15(2(L_{r-4x_1+1}-L_{r-4x_1-3})+L_{r-4x_1-2}-L_{r-4x_1+2})\] \[=\frac 15(L_{r-4x_1-1}-L_{r-4x_1-5}).\] We know that $L_{r-4x_1-1}-L_{r-4x_1-5}>0$ because $r-4x_1-5\geq0$. Thus,
\begin{equation} \label{Eq8} 
V(k,r,x_1+1)>V(k,r,x_1). 
\end{equation} 
If $y<\left\lceil\frac{r-1}{4}\right\rceil$, then every element of the set $\{0,1,\ldots,y-1\}$ is less than $\dfrac{r-5}{4}$, so \eqref{Eq8} shows that \[V(k,r,0)<V(k,r,1)<\cdots<V(k,r,y).\] This shows that if $y<\left\lceil\frac{r-1}{4}\right\rceil$, then \[\lambda=y=\delta(r,y),\] which agrees with the statement of the theorem. Therefore, we will henceforth assume that $y\geq\left\lceil\frac{r-1}{4}\right\rceil$ so that $\delta(r,y)=\left\lceil\frac{r-1}{4}\right\rceil$. 
It follows from the inequality $x_2\geq\dfrac{r-1}{4}$ that $4x_2-r+3>4x_2-r-1\geq -2$. If $4x_2-r-1\geq 0$, then \[L_{4x_2-r-1}-L_{4x_2-r+3}<0.\] If $4x_2-r-1=-2$, then \[L_{4x_2-r-1}-L_{4x_2-r+3}=L_{-2}-L_2=0.\] Hence, \[L_{4x_2-r-1}-L_{4x_2-r+3}\leq 0.\] Because $r-4x_2+1$ and $r-4x_2-3$ are even, \[L_{r-4x_2+1}-L_{r-4x_2-3}=L_{4x_2-r-1}-L_{4x_2-r+3}\leq 0.\] Using Lemma \ref{Lem12} and the fact that $k-r\geq 2$, we have \[V(k,r,x_2+1)-V(k,r,x_2)\leq\frac 15(2(L_{r-4x_2+1}-L_{r-4x_2-3})+L_{r-4x_2-2}-L_{r-4x_2+2})\] \[=\frac15(L_{r-4x_2-1}-L_{r-4x_2-5}).\] Because $4x_2-r+5>4x_2-r+1\geq 0$ and $r-4x_2-1,r-4x_2-5$ are even, we have \[L_{r-4x_2-1}-L_{r-4x_2-5}=L_{4x_2-r+1}-L_{4x_2-r+5}<0.\] This shows that 
\begin{equation} \label{Eq9} 
V(k,r,x_2+1)<V(k,r,x_2). 
\end{equation}  
We now have three short cases to consider. In the first case, assume $r\equiv 1\pmod 4$.
We have shown through \eqref{Eq8} and \eqref{Eq9} that \[V(k,r,m)>V(k,r,m-1)>V(k,r,m-2)>\cdots\] and \[V(k,r,m)>V(k,r,m+1)>V(k,r,m+2)>\cdots,\] where $m=\displaystyle{\frac{r-1}{4}}$. This implies that $\dfrac{r-1}{4}$ is the unique value of $x$ that maximizes $V(k,r,x)$. Recall that we have assumed that $y\geq\left\lceil\frac{r-1}{4}\right\rceil$, so $\dfrac{r-1}{4}\in\{0,1,\ldots,y\}$. Consequently, \[\lambda=\frac{r-1}{4}=\delta(r,y).\]
\par 
For the second case, we suppose that $r\equiv 3\pmod 4$ and $r<k-2$. We need to show that $\lambda=\delta(r,y)$. We may use \eqref{Eq8} and \eqref{Eq9} to see that \[V(k,r,n)>V(k,r,n-1)>V(k,r,n-2)>\cdots\] and \[V(k,r,n+1)>V(k,r,n+2)>V(k,r,n+3)>\cdots,\] where $n=\dfrac{r-3}{4}$. By Lemma \ref{Lem12}, \[V(k,r,n+1)-V(k,r,n)=\frac15((k-r)(L_4-L_0)+L_1-L_5)\] \[=\frac15((k-r)(7-2)+1-11)=k-r-2>0,\] so $V(k,r,n)<V(k,r,n+1)$. This shows that $n+1=\dfrac{r+1}{4}$ is the unique value of $x$ that maximizes $V(k,r,x)$. Because $y\geq\left\lceil\frac{r-1}{4}\right\rceil=\frac{r+1}{4}$, it follows that $\frac{r+1}{4}\in\{0,1,\ldots,y\}$. Therefore, \[\lambda=\tfrac{r+1}{4}=\left\lceil\tfrac{r-1}{4}\right\rceil=\delta(r,y).\] 
\par 
For the third and final case, we assume $k-2=r\equiv 3\pmod 4$. We wish to show that $\lambda=\dfrac{r-3}{4}$. Using \eqref{Eq8} and \eqref{Eq9}, we obtain \[V(k,r,n)>V(k,r,n-1)>V(k,r,n-2)>\cdots\] and \[V(k,r,n+1)>V(k,r,n+2)>V(k,r,n+3)>\cdots,\] where $n=\dfrac{r-3}{4}$. Lemma \ref{Lem12} tells us that \[V(k,r,n+1)-V(k,r,n)=\frac15((k-r)(L_4-L_0)+L_1-L_5)\] \[=\frac15((k-r)(7-2)+1-11)=k-r-2=0,\] so $V(k,r,n)=V(k,r,n+1)$. Therefore, $V(k,r,x)$ obtains its maximum when $x=n$ and when $x=n+1$. This means that $\mu(k,r,y)=V(k,r,n)$. Either $\nu(k,r,y)=\gamma(k,r,n)+1$ or $\gamma(k,r,n+1)+1$. Note that $\gamma(k,r,n)+1<\gamma(k,r,n+1)+1$ by Definition \ref{Def6}. Since $\nu(k,r,y)$ is defined to be the \emph{smallest} element of $I(k,r,y)$ such that $s_b(\nu(k,r,y)+1)=\mu(k,r,y)$, it follows that $\nu(k,r,y)=\gamma(k,r,n)+1$. That is, $\lambda=n=\dfrac{r-3}{4}$. 
\end{proof}
We end this section with a definition and a theorem that will prove useful in the next section. 
\begin{definition} \label{Def4}
For any integers $k$ and $m$ with $2\leq m\leq k$, let $G_k(m)=b^k+h_m$ and $H_k(m)=F_{m+2}+(k-m)F_m$. 
\end{definition}  
Although it is possible to give an easy inductive proof of the following theorem using the simple Fibonacci-like recurrence \begin{equation} \label{Eq3}
H_k(m)=H_{k-1}(m-1)+H_{k-2}(m-2),
\end{equation} we prefer a proof based on Theorem \ref{Thm2}.
\par 
\begin{theorem} \label{Thm1}
For any integers $k$ and $m$ with $2\leq m\leq k$, we have \[s_b(G_k(m))=H_k(m).\]  
\end{theorem}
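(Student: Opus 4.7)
The plan is to compute $s_b(G_k(m))$ by applying Theorem \ref{Thm2} to the ordinary base $b$ expansion of $G_k(m)-1 = b^k + h_m - 1$, after reading off that expansion from Definition \ref{Def2}.

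First, I would dispose of the boundary case $m=2$ separately, since Theorem \ref{Thm2} requires $\ell \geq 1$ and here $G_k(2)-1 = b^k$ is a single power of $b$. A one-line induction on $k$ using \eqref{Eq1} gives $s_b(b^k+1) = s_b(b^{k-1}) + s_b(b^{k-1}+1) = 1 + s_b(b^{k-1}+1)$, so $s_b(G_k(2)) = k+1 = H_k(2)$.

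For $m \geq 3$, I would split by parity. From Definition \ref{Def2} one reads off
\[ G_k(m)-1 = \begin{cases} b + b^3 + b^5 + \cdots + b^{m-2} + b^k, & 2\nmid m, \\ b^2 + b^4 + \cdots + b^{m-2} + b^k, & 2\mid m. \end{cases} \]
Writing this as $b^{e_0}+b^{e_1}+\cdots+b^{e_\ell}$, we have $e_0\in\{1,2\}$ (determined by parity), $\ell = \lfloor(m-1)/2\rfloor$, and $d_i = e_i - e_{i-1} - 1$ equals $1$ for $i<\ell$ and equals $k-m+1$ for $i=\ell$. Theorem \ref{Thm2} then yields
\[ s_b(G_k(m)) = \Tr\bigl(N_{e_0}\, M_1^{\ell-1}\, M_{k-m+1}\bigr), \]
with the convention $M_1^0 = I_3$ covering $m=3,4$.

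The key step is to simplify $N_{e_0} M_1^{\ell-1}$. Using Lemma \ref{Lem8} and the Fibonacci recurrence, a direct multiplication shows that in both parity cases
\[ N_{e_0} M_1^{\ell-1} = \begin{pmatrix} F_{m-1} & F_{m-2} & F_{m-1} \\ F_{m-1} & F_{m-2} & F_{m-1} \\ 0 & 0 & 0 \end{pmatrix}. \]
(Indeed the $(1,1)$ entry for $e_0=2$ works out to $3F_{2t}+2F_{2t-1} = F_{2t+3}$, and the exponents conspire so $2(\ell-1)+e_0+1 = m-1$ either way.) Calling this matrix $T$ and multiplying by $M_{k-m+1}$, only the diagonal contributes to the trace, giving
\[ \Tr(T M_{k-m+1}) = (k-m+1) F_m + F_{m+1} = (k-m)F_m + F_{m+2} = H_k(m). \]

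The main obstacle is purely bookkeeping: making sure the exponent $\ell-1$ of $M_1$ is handled uniformly so that the Fibonacci indices in $N_{e_0}M_1^{\ell-1}$ align to $F_{m-1},F_{m-2}$ in both the even and odd cases, and verifying that the boundary cases $m=3$ and $m=4$ (where $M_1^0=I_3$) still fit the same formula. Once the matrix $T$ is identified, the trace computation is a line of Fibonacci arithmetic.
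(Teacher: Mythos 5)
Your proposal is correct and follows essentially the same route as the paper's proof: read off the ordinary base $b$ expansion of $G_k(m)-1$ from Definition \ref{Def2}, apply Theorem \ref{Thm2}, and evaluate $\Tr(N_{e_0}M_1^{\ell-1}M_{k-m+1})$ via Lemma \ref{Lem8}, with your matrix $T$ and final trace computation matching the paper's $\rho_1,\rho_2$ calculation after its parity split. Your separate treatment of the degenerate case $m=2$ (where $\ell=0$ and Theorem \ref{Thm2} does not apply) is a small point of extra care that the paper's proof silently omits.
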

\begin{proof} 
Referring to Definition \ref{Def2}, we see that we may write \[G_k(m)-1=b^k+\sum_{i=0}^{\lfloor\frac{m-3}{2}\rfloor}b^{m-2-2i}=\sum_{i=0}^\ell b^{e_i},\] where \[e_0=m-2-2\left\lfloor\tfrac{m-3}{2}\right\rfloor=\begin{cases} 2, & \mbox{if } 2\vert m; \\ 1, & \mbox{if } 2\nmid m, \end{cases}\] $\ell=\left\lfloor\frac{m-1}{2}\right\rfloor$, $e_\ell=k$, and $e_j=e_0+2j$ for all $j\in\{0,1,\ldots,\ell-1\}$. For example, \[G_{12}(7)-1=b^{12}+\sum_{i=0}^{\left\lfloor\frac{7-3}{2}\right\rfloor}b^{7-2-2i}=b^1+b^3+b^5+b^{12}.\] Defining $d_i$ as in Theorem \ref{Thm2}, we have $d_j=1$ for all $j\in\{1,2,\ldots,\ell-1\}$ and \[d_\ell=e_\ell-e_{\ell-1}-1=k-(e_0+2(\ell-1))-1\] \[=k-\left(\left(m-2-2\left\lfloor\tfrac{m-3}{2}\right\rfloor\right)+2\left(\left\lfloor\tfrac{m-1}{2}\right\rfloor-1\right)\right)-1=k-m+1.\] By Theorem \ref{Thm2}, \[s_b(G_k(m))=\Tr(N_{e_0}M_1^{\ell-1}M_{k-m+1}).\] Using Lemma \ref{Lem8}, one may show that \[N_{e_0}M_1^{\ell-1}M_{k-m+1}=\left( \begin{array}{ccc}
\rho_1 & \rho_2 & \rho_1 \\ 
\rho_1 & \rho_2 & \rho_1 \\
0 & 0 & 0 \end{array} \right),\] where \[\rho_1=(k-m-1)(e_0F_{2\ell}+F_{2\ell-1})+e_0F_{2\ell+2}+F_{2\ell+1}\] and \[\rho_2=e_0F_{2\ell}+F_{2\ell-1}.\] Therefore, \[s_b(G_k(m))=\rho_1+\rho_2=(k-m)(e_0F_{2\ell}+F_{2\ell-1})+e_0F_{2\ell+2}+F_{2\ell+1}.\] If $m$ is even, then $e_0=2$ and $\ell=\dfrac{m-2}{2}$, so \[s_b(G_k(m))=(k-m)(2F_{m-2}+F_{m-3})+2F_m+F_{m-1}\] \[=(k-m)F_m+F_{m+2}=H_k(m).\] If $m$ is odd, then $e_0=1$ and $\ell=\dfrac{m-1}{2}$, so \[s_b(G_k(m))=(k-m)(F_{m-1}+F_{m-2})+F_{m+1}+F_m\] \[=(k-m)F_m+F_{m+2}=H_k(m).\] 
\end{proof}
\section{More Manageable Bounds} 
In the previous section, we derived fairly strong upper bounds (relative to those previously known) for the values of $s_b(n)$ for those integers $n$ satisfying $b^k\leq n\leq b^k+h_k$ for some integer $k\geq 3$. Unfortunately, these bounds are somewhat cumbersome. In this section, we will weaken them in order to make them cleaner and more easily applicable. 
\par
Suppose $k,m$, and $n$ are integers such that $2\leq m<k$ and $G_k(m)<n<G_k(m+1)$. We wish to use Theorem \ref{Thm3} to show that the point $(n,s_b(n))$ lies below the line segment connecting the points $(G_k(m),H_k(m))$ and \\ $(G_k(m+1),H_k(m+1))$. This leads us to the following definition. 
\begin{definition} \label{Def8}
For any integers $k$ and $m$ with $2\leq m<k$, define a function $J_{k,m}\colon\mathbb R\rightarrow\mathbb R$ by \[J_{k,m}(x)=\frac{H_k(m+1)-H_k(m)}{G_k(m+1)-G_k(m)}(x-G_k(m))+H_k(m).\]  
\end{definition} 
The graph of $J_{k,m}(x)$ is the line passing through $(G_k(m),H_k(m))$ and $(G_k(m+1),H_k(m+1))$. Our aim is to show that $s_b(n)<J_{k,m}(n)$ for all $n\in\{G_k(m)+1,\ldots,G_k(m+1)-1\}$. Observe that we may rewrite $J_{k,m}(x)$ as \[J_{k,m}(x)=\frac{F_{m-1}(k-m)}{h_{m+1}-h_m}(x-G_k(m))+H_k(m)\] because $H_k(m+1)-H_k(m)=(k-m-1)F_{m+1}+F_{m+3}-(k-m)F_m-F_{m+2}=F_{m-1}(k-m)$ and $G_k(m+1)-G_k(m)=h_{m+1}-h_m$. 
\par
\begin{lemma} \label{Lem15}
If $m$ and $x$ are positive integers with $m\geq 4$ and \\ $x<G_{m-1}(m-3)$, then $s_b(x)\leq F_m$.
\end{lemma}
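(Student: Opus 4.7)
The plan is to split the range $x < G_{m-1}(m-3) = b^{m-1} + h_{m-3}$ into the piece $x \leq b^{m-1}$ and the sliver $b^{m-1} < x \leq b^{m-1} + h_{m-3} - 1$. The first piece is handled for free: any $x < b^{m-1}$ lies in $[b^k, b^{k+1})$ for some $k \leq m-2$, so Proposition \ref{Prop1} gives $s_b(x) \leq F_{k+2} \leq F_m$, and $s_b(b^{m-1}) = 1$ by Lemma \ref{Lem1}. So all the work is in the sliver.

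When $m \in \{4,5\}$ we have $h_{m-3} = 1$ and the sliver is empty, so the lemma is already proved. For $m \geq 6$, unpacking Definition \ref{Def2} yields $h_{m-3} - 1 = \sum_{i=0}^{\lfloor(m-6)/2\rfloor} b^{m-5-2i}$, so the sliver coincides exactly with $I(m-1, m-5, \lfloor(m-6)/2\rfloor)$ of Definition \ref{Def3}. Setting $k = m-1$, $r = m-5$, $y = \lfloor(m-6)/2\rfloor$, the hypothesis $2y < r < k-1$ of Theorem \ref{Thm3} is immediate (the first amounts to $2\lfloor(m-6)/2\rfloor \leq m-6 < m-5$). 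Invoking Theorem \ref{Thm3} together with Lemma \ref{Lem11}, the maximum of $s_b$ on the sliver equals $V(m-1, m-5, x^\ast)$ for the specific $x^\ast$ that the theorem prescribes, so it suffices to prove $V(m-1, m-5, x) \leq F_m$ for every $x \in \{0, 1, \ldots, y\}$.

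To carry this out, I would substitute $k-r = 4$ into Definition \ref{Def7} and combine it with the identity $5F_m = 2L_{m+1} - L_m$, then apply $L_j = L_{j-1} + L_{j-2}$ repeatedly to telescope $2L_{m+1}-L_m$ down to $7L_{m-3}+4L_{m-4}$. The difference $5(F_m - V(m-1, m-5, x))$ then collapses to $L_{m-6} + 4L_{m-4x-4} - L_{m-4x-3}$, so writing $s = m - 4x - 4$ the task reduces to showing $L_{s+1} - 4L_s \leq L_{m-6}$, equivalently $L_{s-1} - 3L_s \leq L_{m-6}$.

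The main obstacle is that $s$ can be negative when $x$ is near $y$, and in that regime $L_{s-1} - 3L_s$ can turn positive; I would resolve this by a case split on the parity and sign of $s$ using $L_{-j} = (-1)^j L_j$. For $s \geq 2$, $s \in \{0,1\}$, or $s$ negative and even, the quantity $L_{s-1}-3L_s$ is nonpositive and the bound is automatic. The genuine case is $s = -(2k+1)$ with $k \geq 0$, which forces $m$ odd and $L_{s-1} - 3L_s = L_{2k+2} + 3L_{2k+1}$; the constraint $x \leq y = (m-7)/2$ gives $k \leq (m-11)/2$, so the worst instance is $k = (m-11)/2$, where $L_{2k+2} + 3L_{2k+1} = L_{m-9} + 3L_{m-10}$. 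The tight estimate is then the identity $L_{m-6} = 3L_{m-9} + 2L_{m-10}$, which yields exactly the required slack $L_{m-6} - (L_{m-9} + 3L_{m-10}) = 2L_{m-9} - L_{m-10} \geq 0$ for $m \geq 11$ (and the Case D range begins only at $m=11$), completing the verification.
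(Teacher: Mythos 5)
Your proof is correct, and while it follows the paper's skeleton exactly up through the reduction --- handle $x\leq b^{m-1}$ via Proposition \ref{Prop1} and Lemma \ref{Lem1}, identify the remaining range with $I\left(m-1,m-5,\left\lfloor\frac{m-6}{2}\right\rfloor\right)$, and reduce to bounding $\mu$ --- your endgame is organized genuinely differently. The paper uses the full strength of Theorem \ref{Thm3} to pin down the exact maximizing index, then splits on the parity of $m$ (and on $m\bmod 4$ in the even case): for odd $m$ it shows $\mu=11F_{m-5}<F_m$ via a second-order recurrence matching argument, and for even $m$ it bounds $\mu$ by $\frac15(8L_{m-3}+2L_{m-4})$ and telescopes that below $F_m$. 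You instead bound $V(m-1,m-5,x)$ uniformly for \emph{every} $x\in\{0,\ldots,y\}$, which only requires the easy fact that $\mu(k,r,y)=\max_x V(k,r,x)$ (Lemma \ref{Lem11} plus the structural discussion preceding Definition \ref{Def6}), not the identification of the maximizer; the price is a case split on the sign and parity of $s=m-4x-4$ rather than on $m$. Your algebra checks out: $5F_m=L_{m+1}+L_{m-1}=7L_{m-3}+4L_{m-4}$, so $5\left(F_m-V(m-1,m-5,x)\right)=-L_{m-3}+2L_{m-4}+4L_s-L_{s+1}=L_{m-6}+4L_s-L_{s+1}$, and the only case where $L_{s-1}-3L_s$ is positive is $s$ negative and odd, where your constraint $k\leq\frac{m-11}{2}$ and the identity $L_{m-6}=3L_{m-9}+2L_{m-10}$ close the gap (spot-check at $m=11$, $x=2$: slack is $5$, consistent with $F_{11}-11F_6=89-88=1$). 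Your route buys independence from the harder cases of Theorem \ref{Thm3} and a single unified formula; the paper's buys the sharper closed form $\mu=11F_{m-5}$ for odd $m$.
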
 
\begin{proof} 
Let $m$ and $x$ be as in the statement of the lemma. If $x<b^{m-1}$, then it follows from Proposition \ref{Prop1} that $s_b(x)\leq F_m$. If $x=b^{m-1}$, then $s_b(x)=1<F_m$ by Lemma \ref{Lem1}. Therefore, we will assume $b^{m-1}<x$. Note that the inequalities $b^{m-1}<x<G_{m-1}(m-3)$ force $m\geq 6$. Since \[b^{m-1}<x\leq G_{m-1}(m-3)-1=b^{m-1}+\sum_{i=0}^{\left\lfloor\frac{m-6}{2}\right\rfloor}b^{m-5-2i},\] it follows from Definition \ref{Def3} that $x\in I\left(m-1,m-5,\left\lfloor\frac{m-6}{2}\right\rfloor\right)$. Referring to Definition \ref{Def3} again, we find that $s_b(x)\leq\mu\left(m-1,m-5,\left\lfloor\frac{m-6}{2}\right\rfloor\right)$. Therefore, we simply need to show that 
\begin{equation} \label{Eq29} 
\mu\left(m-1,m-5,\left\lfloor\tfrac{m-6}{2}\right\rfloor\right)\leq F_m.
\end{equation}
Suppose $m$ is odd. It follows from Theorem \ref{Thm3} and Definition \ref{Def7} that \[\mu\left(m-1,m-5,\left\lfloor\tfrac{m-6}{2}\right\rfloor\right)=\mu\left(m-1,m-5,\tfrac{m-7}{2}\right)=V\left(m-1,m-5,\tfrac{m-7}{2}\right)\] \[=\frac15(4(2L_{m-3}-L_{10-m})+2L_{m-4}+L_{11-m}).\] Because $m$ is odd, $L_{10-m}=-L_{m-10}$ and $L_{11-m}=L_{m-11}$. Therefore, \[\mu\left(m-1,m-5,\left\lfloor\tfrac{m-6}{2}\right\rfloor\right)=\frac15(4(2L_{m-3}+L_{m-10})+2L_{m-4}+L_{m-11}).\] For any integer $t$, let \[\eta_t=\frac15(4(2L_{t-3}+L_{t-10})+2L_{t-4}+L_{t-11})\] and \[\kappa_t=11F_{t-5}.\] It is easy to see that the recurrence relations $\eta_{t+4}=3\eta_{t+2}-\eta_t$ and $\kappa_{t+4}=3\kappa_{t+2}-\kappa_t$ hold for all integers $t$. In addition, one may verify that $\kappa_5=\eta_5=0$ and $\kappa_7=\eta_7=11$. This implies that $\kappa_t=\eta_t$ for all odd integers $t$. In particular, \[\mu\left(m-1,m-5,\left\lfloor\tfrac{m-6}{2}\right\rfloor\right)=\eta_m=\kappa_m=11F_{m-5}<F_m,\] where the inequality $11F_{m-5}<F_m$ is easily-proven for all odd $m>6$ (for example, by showing that $11F_{m-5}=22F_{m-7}+11F_{m-8}<21F_{m-7}+13F_{m-8}=F_m$). This proves \eqref{Eq29} when $m$ is odd. 
\par 
Next, assume $m$ is even. By Theorem \ref{Thm3}, \[\mu\left(m-1,m-5,\left\lfloor\tfrac{m-6}{2}\right\rfloor\right)=V\left(m-1,m-5,\delta\left(m-5,\left\lfloor\tfrac{m-6}{2}\right\rfloor\right)\right),\] 
where \[\delta\left(m-5,\left\lfloor\tfrac{m-6}{2}\right\rfloor\right)=\min\left\{\left\lceil\tfrac{(m-5)-1}{4}\right\rceil,\left\lfloor\tfrac{m-6}{2}\right\rfloor\right\}=\left\lceil\tfrac{m-6}{4}\right\rceil.\] If $m\equiv 0\pmod 4$, 
then $\left\lceil\frac{m-6}{4}\right\rceil=\frac{m-4}{4}$, so \[\mu\left(m-1,m-5,\left\lfloor\tfrac{m-6}{2}\right\rfloor\right)=V\left(m-1,m-5,\tfrac{m-4}{4}\right)\] \[=\frac15(4(2L_{m-3}-L_0)+2L_{m-4}+L_1)=\frac15(8L_{m-3}+2L_{m-4}-7)\] \[<\frac15(8L_{m-3}+2L_{m-4}).\] Similarly, if $m\equiv 2\pmod 4$, then $\left\lceil\frac{m-6}{4}\right\rceil=\frac{m-6}{4}$, so \[\mu\left(m-1,m-5,\left\lfloor\tfrac{m-6}{2}\right\rfloor\right)=V\left(m-1,m-5,\tfrac{m-6}{4}\right)\] \[=\frac15(4(2L_{m-3}-L_2)+2L_{m-4}+L_3)=\frac15(8L_{m-3}+2L_{m-4}-8)\] \[<\frac15(8L_{m-3}+2L_{m-4}).\] In either case, we may 
use the identity $F_u=\dfrac15(L_{u+1}+L_{u-1})$ to see that \[\mu\left(m-1,m-5,\left\lfloor\tfrac{m-6}{2}\right\rfloor\right)<\frac15(8L_{m-3}+2L_{m-4})=\frac15(6L_{m-3}+2L_{m-2})\] \[=\frac15(4L_{m-3}+2L_{m-1})=2\cdot\frac15(L_{m-3}+L_{m-1})+\frac15(L_{m-3}+L_{m-4}+L_{m-5})\] \[=2F_{m-2}+\frac15(L_{m-2}+L_{m-5})<2F_{m-2}+\frac15(L_{m-2}+L_{m-4})\] \[=2F_{m-2}+F_{m-3}=F_m.\] This proves \eqref{Eq29} in the case when $m$ is even, so the proof is complete. 
\end{proof}  
Throughout the proofs of the following three lemmas, we assume $b=2$. For example, since $G_k(4)=b^k+h_4=b^k+b^2+1$, it will be understood that $G_k(4)=2^k+5$. 
\begin{lemma} \label{Lem18} 
For any positive integers $t$ and $x$ with $t\geq \left\lfloor\log_2x\right\rfloor+2$, we have \[s_2(2^t+x)\leq s_2(x)(t+1-\left\lfloor\log_2x\right\rfloor).\] 
\end{lemma}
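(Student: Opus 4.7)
The plan is to apply the transfer-matrix formula of Theorem~\ref{Thm2} to both $s_2(x)$ and $s_2(2^t+x)$, then compare the two traces via a short structural identity. Setting $k=\lfloor\log_2 x\rfloor$ and $r=t-k-1$, the hypothesis gives $r\geq 1$ and $r+2=t+1-k$.

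I would first dispose of the degenerate case $x=2^k$. Applying the recurrence $s_2(2n)=s_2(n)$ iteratively gives $s_2(2^t+2^k)=s_2(2^{t-k}+1)$, and Theorem~\ref{Thm1} with $m=2$ yields $s_2(2^{t-k}+1)=t-k+1=t+1-k$. Since $s_2(2^k)=1$, the inequality holds with equality in this case.

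For $x>2^k$, the ordinary binary expansion of $x-1$ has its top $1$-bit at position $k$, so I would write $x-1=2^{e_0}+\cdots+2^{e_\ell}$ with $e_0<\cdots<e_\ell=k$, set $d_i=e_i-e_{i-1}-1$, and define $P=N_{e_0}M_{d_1}\cdots M_{d_\ell}$ (reading the empty product as $I_3$ when $\ell=0$). Theorem~\ref{Thm2}, supplemented when $\ell=0$ by the identity $\Tr(N_k)=k+1=s_2(2^k+1)$ (immediate from Theorem~\ref{Thm1} at $m=2$), then gives $s_2(x)=\Tr(P)$, while a second application of Theorem~\ref{Thm2} to the exponents $e_0<\cdots<e_\ell<t$ appearing in the expansion of $2^t+x-1$ gives $s_2(2^t+x)=\Tr(PM_r)$.

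The heart of the argument is a structural observation about $P$: by Lemma~\ref{Lem6}, $P\in\Xi$, while $N_{e_0}$ has zero third row and right-multiplication preserves this, so
\[
P=\begin{pmatrix} p_1 & p_2 & p_1\\ p_1 & p_2 & p_1\\ 0 & 0 & 0\end{pmatrix}
\]
for some $p_1\geq 0$ and $p_2>0$. A direct multiplication then gives
\[
\Tr(PM_r)=(p_1+p_2)r+2p_1+p_2=(r+1)\Tr(P)+p_1\leq (r+2)\Tr(P),
\]
where the last step uses $p_1\leq p_1+p_2=\Tr(P)$; substituting $r+2=t+1-k$ finishes the proof. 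The main obstacle is the mild bookkeeping needed to isolate the shape of $P$ (in particular, to check that the bottom row vanishes, collapsing the generic $\Xi$-form to two free parameters) and to dispatch the $\ell=0$ boundary where Theorem~\ref{Thm2} does not literally apply; once these are in place the bound reduces to the three-line matrix computation above.
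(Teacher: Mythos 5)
Your proof is correct, but it takes a genuinely different route from the paper's. The paper establishes Lemma~\ref{Lem18} by a short induction on $t$ driven directly by the recurrence \eqref{Eq1}: the base case $t=2$ forces $x=1$; for even $x$ one reduces via $s_2(2^t+x)=s_2(2^{t-1}+x/2)$; for odd $x$ one applies the inductive bound to $s_2(2^{t-1}+(x\pm1)/2)$ separately and reassembles them using $s_2(x)=s_2((x-1)/2)+s_2((x+1)/2)$. Your argument instead runs everything through Theorem~\ref{Thm2}, and the key computation checks out: with $P\in\Xi$ having zero third row (since $N_{e_0}$ does and right-multiplication preserves this), one indeed gets
\[
\Tr(PM_r)=(p_1+p_2)r+2p_1+p_2=(r+1)\Tr(P)+p_1\leq(r+2)\Tr(P),
\]
and the boundary cases $x=2^k$ and $\ell=0$ are dispatched correctly --- the only microscopic caveat being that your appeal to Theorem~\ref{Thm1} for $\Tr(N_k)=s_2(2^k+1)$ literally requires $k\geq 2$, so the instance $k=1$ (i.e., $s_2(3)=2=\Tr(N_1)$) needs a one-line direct check. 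The trade-off: the paper's induction is entirely elementary and self-contained, needing nothing beyond \eqref{Eq1}, whereas your route leans on the heavier transfer-matrix machinery but extracts strictly more information, namely the exact affine formula $s_2(2^t+x)=(t-\lfloor\log_2x\rfloor)\,s_2(x)+p_1$ in the nondegenerate case (so the inequality of the lemma is in fact strict there, with slack exactly $p_2=\Tr(P)-p_1>0$), which makes transparent where the bound is tight and fits naturally alongside the paper's other applications of Theorem~\ref{Thm2}.
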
 
\begin{proof} 
The proof is by induction on $t$. The inequality $t\geq\lfloor\log_2x\rfloor+2$ forces $t\geq 2$. If $t=2$, then the inequality $t\geq\lfloor\log_2x\rfloor+2$ forces $x=1$ so that \[s_2(2^t+x)=s_2(5)=3=s_2(1)(2+1-\lfloor\log_21\rfloor)=s_2(x)(t+1-\lfloor\log_2x\rfloor).\] Now, suppose $t\geq 3$. If $x$ is even, then it follows from \eqref{Eq1} and induction on $t$ that \[s_2(2^t+x)=s_2(2^{t-1}+x/2)\leq s_2(x/2)((t-1)+1-\lfloor\log_2(x/2)\rfloor)\] \[=s_2(x)(t+1-\lfloor\log_2x\rfloor).\] Therefore, we may assume $x$ is odd. By induction on $t$, we have \[s_2(2^{t-1}+(x-1)/2)\leq s_2((x-1)/2)((t-1)+1-\left\lfloor\log_2\left((x-1)/2\right)\right\rfloor)\] \[=s_2((x-1)/2)(t+1-\lfloor\log_2x\rfloor)\] and \[s_2(2^{t-1}+(x+1)/2)\leq s_2((x+1)/2)((t-1)+1-\left\lfloor\log_2\left((x+1)/2\right)\right\rfloor)\] \[\leq s_2((x+1)/2)(t+1-\lfloor\log_2x\rfloor).\] Therefore, it follows from \eqref{Eq1} that \[s_2(2^t+x)=s_2\left(2^{t-1}+(x-1)/2\right)+s_2\left(2^{t-1}+(x+1)/2\right)\] \[\leq (s_2\left((x-1)/2\right)+s_2((x+1)/2))\left(t+1-\left\lfloor\log_2x\right\rfloor\right)\] \[=s_2(x)(t+1-\lfloor\log_2x\rfloor).\]
\end{proof} 
\begin{lemma} \label{Lem19} 
If $k,m,$ and $n$ are integers such that $4\leq m<k$ and \[2^k+2^{m-1}\leq n<2^k+G_{m-1}(m-3),\] then $s_2(n)<H_k(m)$.
\end{lemma}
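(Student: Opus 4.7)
The plan is to combine Lemmas \ref{Lem15} and \ref{Lem18}, so that the heavy lifting has already been done and only the arithmetic tying them together remains. First I would write $n = 2^k + x$, where the hypothesis on $n$ gives exactly $2^{m-1} \leq x < G_{m-1}(m-3) = 2^{m-1} + h_{m-3}$. Since $h_{m-3} < 2^{m-3}$ by Definition \ref{Def2} (the largest term in the sum defining $h_{m-3}$ is $2^{m-5}$), we have $x < 2^{m-1} + 2^{m-3} < 2^m$, and hence $\lfloor \log_2 x \rfloor = m-1$.

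Next I would check that the hypotheses of Lemma \ref{Lem18} are met with $t = k$: the assumption $m < k$ forces $k \geq m+1 = \lfloor \log_2 x\rfloor + 2$. Applying that lemma yields
\[
s_2(n) = s_2(2^k + x) \leq s_2(x)\bigl(k + 1 - (m-1)\bigr) = s_2(x)(k - m + 2).
\]
Since $x < G_{m-1}(m-3)$ and $m \geq 4$, Lemma \ref{Lem15} (used with the same $m$) gives $s_2(x) \leq F_m$, so
\[
s_2(n) \leq F_m (k - m + 2) = (k-m)F_m + 2F_m.
\]

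The final step is the Fibonacci estimate $2F_m < F_{m+2}$, which follows from $F_{m+2} = F_{m+1} + F_m$ and $F_m < F_{m+1}$ (valid for all $m \geq 2$, and in particular for $m \geq 4$). Adding $(k-m)F_m$ to both sides gives $(k-m)F_m + 2F_m < (k-m)F_m + F_{m+2} = H_k(m)$, completing the argument.

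There is no real obstacle: the mathematical content has been packaged into Lemmas \ref{Lem15} and \ref{Lem18}, and the proof reduces to verifying that their hypotheses apply and then noting that the slack $2F_m$ vs.\ $F_{m+2}$ is strictly positive. The only thing to keep track of is that the range of $x$ fits into $[2^{m-1}, 2^m)$ so that $\lfloor \log_2 x \rfloor = m-1$, and that $k > m$ supplies both the hypothesis of Lemma \ref{Lem18} and the positive multiplier $k - m$ in $H_k(m)$.
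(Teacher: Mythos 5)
Your proof is correct and follows exactly the paper's own argument: write $n=2^k+x$, note $\lfloor\log_2 x\rfloor=m-1$, apply Lemma \ref{Lem18} and then Lemma \ref{Lem15} to get $s_2(n)\leq F_m(k-m+2)$, and finish with $2F_m<F_{m+2}$. The extra details you supply (verifying the hypothesis $k\geq\lfloor\log_2 x\rfloor+2$ and the bound $x<2^m$) are exactly the checks the paper leaves implicit.
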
 
\begin{proof} 
Let $k,m,$ and $n$ be as stated in the lemma. Let $x=n-2^k$, and note that $\left\lfloor\log_2x\right\rfloor=m-1$. We may combine Lemmas \ref{Lem15} and \ref{Lem18} to get \[s_2(n)=s_2(2^k+x)\leq s_2(x)(k+1-\left\lfloor\log_2x\right\rfloor)=s_2(x)(k-m+2)\] \[\leq F_m(k-m+2)<F_m(k-m)+F_{m+2}=H_k(m).\]
\end{proof} 
\begin{lemma} \label{Lem20} 
If $k,m,$ and $n$ are integers such that $m\in\{5,7\}$, $m<k$, and $n=2^k+G_{m-1}(m-3)$, then \[s_2(n)<J_{k,m}(n).\]
\end{lemma}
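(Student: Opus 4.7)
The plan is to compute $s_2(n)$ explicitly using Theorem~\ref{Thm2} and compare the result with $J_{k,m}(n)$, given in closed form via Definition~\ref{Def8}, for each of the two values $m=5$ and $m=7$. Because the statement only involves these two fixed values of $m$, the proof reduces essentially to two routine matrix-trace computations followed by an elementary inequality in $k$.

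First I would unpack the binary expansion of $n-1 = 2^k + G_{m-1}(m-3) - 1$. By Definition~\ref{Def2}, $G_4(2)-1 = 2^4$ and $G_6(4)-1 = 2^6+2^2$, so when $m=5$ we have $n-1 = 2^k + 2^4$ with $e_0 = 4$, $e_1 = k$, and $d_1 = k-5$; and when $m=7$ we have $n-1 = 2^k + 2^6 + 2^2$ with $e_0=2$, $e_1=6$, $e_2=k$, $d_1=3$, $d_2=k-7$. Applying Theorem~\ref{Thm2} and evaluating the traces directly (using $b=2$ in the definitions of $M_t$ and $N_t$) gives $s_2(n) = \Tr(N_4 M_{k-5}) = 5k-16$ when $m=5$, and $s_2(n) = \Tr(N_2 M_3 M_{k-7}) = 14k-75$ when $m=7$.

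Next I would compute $J_{k,m}(n)$ using the rewritten formula $J_{k,m}(x) = \frac{F_{m-1}(k-m)}{h_{m+1}-h_m}(x - G_k(m)) + H_k(m)$ noted immediately after Definition~\ref{Def8}. For $m=5$: $F_4=3$, $h_5 = 11$, $h_6 = 21$, $G_k(5)=2^k+11$, $H_k(5)=5k-12$, and $n - G_k(5) = 6$. For $m=7$: $F_6=8$, $h_7 = 43$, $h_8 = 85$, $G_k(7)=2^k+43$, $H_k(7)=13k-57$, and $n - G_k(7) = 26$. Substituting and simplifying reduces the desired inequality $s_2(n) < J_{k,m}(n)$ to $9k > 25$ in the case $m=5$ and to $83k > 350$ in the case $m=7$, both of which hold for every $k$ satisfying the hypothesis $k > m$ (so $k \geq 6$ or $k \geq 8$, respectively).

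There is no real obstacle here beyond careful bookkeeping; the heavy lifting has already been carried out in Theorem~\ref{Thm2}, which converts the combinatorial count $s_2(n)$ into a matrix trace that is straightforward to evaluate when $\ell \leq 2$. The only step requiring mild care is reading off the correct binary expansion of $n-1$ and verifying that the leading $2^k$ remains separated from the block $G_{m-1}(m-3)-1$, but for $m \in \{5,7\}$ the hypothesis $k > m \geq 5$ guarantees this separation.
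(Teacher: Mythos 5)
Your strategy (evaluating $s_2(n)$ exactly via Theorem \ref{Thm2} and comparing with the closed form of $J_{k,m}$) is genuinely different from the paper's, which never computes $s_2(2^k+17)$ or $s_2(2^k+69)$ exactly: instead it applies the multiplicative bound of Lemma \ref{Lem18} to the seed values $s_2(17)=5$ and $s_2(69)=14$ to get $s_2(2^k+17)\leq 5(k-3)$ and $s_2(2^k+69)\leq 14(k-5)$, and then checks these against $J_{k,m}(n)$. Your route is perfectly reasonable in principle, and your $m=5$ computation is correct: $s_2(2^k+2^4+1)=\Tr(N_4M_{k-5})=5k-16$ (e.g., $k=6$ gives $s_2(81)=14$), and the reduction to $9k>25$ is right.

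However, the $m=7$ value you report is false: $s_2(2^k+69)=14k-73$, not $14k-75$. Test $k=8$: $n=325$, and the recurrence gives $s_2(325)=s_2(162)+s_2(163)=14+25=39$, whereas $14\cdot 8-75=37$. The source of the discrepancy is that the printed third row of $M_t$, namely $(0,1,0)$, is a misprint for $(1,0,1)$: Lemma \ref{Lem8} with $t=1$ forces the third row of $M_1$ to be $(F_1,F_0,F_1)=(1,0,1)$, and the worked example states explicitly that from the state $c_{e_i}=b$ one can reach $c_{e_{i+1}}=0$ and $c_{e_{i+1}}=b$ in one way each but cannot reach $c_{e_{i+1}}=1$. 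You computed $\Tr(N_2M_3M_{k-7})$ with the printed matrix. In the $m=5$ case only two matrices appear and the trace against $N_{e_0}$ (whose third row vanishes and whose first two rows have equal first and third entries) happens to be insensitive to the misprint, which is why that case came out right; with three factors it is not. The error is in the safe direction and your argument survives the correction --- with $s_2(2^k+69)=14(k-7)+25$ the inequality $s_2(n)<J_{k,7}(n)=\tfrac{377}{21}(k-7)+34$ reduces to $83(k-7)>-189$, which holds trivially --- but as written your proof asserts a false identity, so the $m=7$ computation must be redone with the corrected transfer matrix before the argument is complete.
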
 
\begin{proof} 
The proof is very straightforward. We make use of the fact that $s_2(17)=5$ and $s_2(69)=14$. Suppose $m=5$. Then $k\geq 6$ and $n=2^k+G_{4}(2)=2^k+2^4+h_2=2^k+17$ by hypothesis. Using Lemma \ref{Lem18} and the remark following Definition \ref{Def8}, we have \[s_2(n)=s_2(2^k+17)\leq s_2(17)(k+1-\left\lfloor\log_217\right\rfloor)=5(k-3)<\frac{34}{5}(k-5)+13\] \[=\frac{3(k-5)}{21-11}((2^k+17)-(2^k+11))+13+5(k-5)\] \[=\frac{F_4(k-5)}{h_6-h_5}(n-G_k(5))+F_7+F_5(k-5)=J_{k,5}(n).\]
Next, suppose $m=7$. Then $k\geq 8$ and $n=2^k+G_6(4)=2^k+2^6+2^2+1=2^k+69$. Using Lemma \ref{Lem18} and the paragraph following Definition \ref{Def8} once again yields \[s_2(n)=s_2(2^k+69)\leq s_2(69)(k+1-\left\lfloor\log_269\right\rfloor)=14(k-5)\] \[<\frac{104}{21}(k-7)+34+13(k-7)\] \[=\frac{8(k-7)}{85-43}((2^k+69)-(2^k+43))+34+13(k-7)\] \[=\frac{F_6(k-7)}{h_8-h_7}(n-G_k(7))+F_9+F_7(k-7)=J_{k,7}(n).\] 
\end{proof} 
We now return to our base assumption that $b$ may represent any integer larger than $1$.
\begin{lemma} \label{Lem13} 
Let $m$ and $y$ be integers such that $m\geq 4$ and $1\leq y\leq\left\lfloor\frac{m-2}{2}\right\rfloor$. We have $L_{m-4y-1}\leq L_{m-5}$ and $L_{m-4y}\geq -L_{m-6}$. 
\end{lemma}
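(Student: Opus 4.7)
The plan is to reduce both inequalities to the monotonicity of the two subsequences $(L_{2i})_{i\ge 0}$ and $(L_{2i+1})_{i\ge 0}$, after applying the reflection identity $L_{-n}=(-1)^nL_n$. First I would extract sharp index bounds from the hypothesis. Since $y\le\lfloor(m-2)/2\rfloor$, the case split by parity of $m$ gives $2y\le m-2$ when $m$ is even and $2y\le m-3$ when $m$ is odd, so
\[
m-4y-1\ge\begin{cases} 3-m,&m\text{ even},\\ 5-m,&m\text{ odd},\end{cases}\qquad m-4y\ge\begin{cases}4-m,&m\text{ even},\\6-m,&m\text{ odd}.\end{cases}
\]
Also $y\ge 1$ gives $m-4y-1\le m-5$ and $m-4y\le m-4$. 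A key observation is that $m-4y-1$ has the same parity as $m-5$ (both opposite to $m$) and $m-4y$ has the same parity as $m-6$ (both the same as $m$).

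For the first inequality $L_{m-4y-1}\le L_{m-5}$, I would split on the sign of $t:=m-4y-1$. When $t\ge 0$, the integers $t$ and $m-5$ are of the same parity with $0\le t\le m-5$, so the monotonicity of Lucas numbers on same-parity indices yields $L_t\le L_{m-5}$. When $t<0$ and $m$ is even, $t$ is odd, so $L_t=-L_{|t|}<0\le L_{m-5}$ (the only degenerate instance $m=4$, $y=1$ gives $t=-1=m-5$, an equality). When $t<0$ and $m$ is odd, $t$ is even and the sharper bound $t\ge 5-m$ gives $|t|\le m-5$, so by even-index monotonicity $L_t=L_{|t|}\le L_{m-5}$.

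For the second inequality $L_{m-4y}\ge -L_{m-6}$, set $u:=m-4y$ and split similarly. When $u\ge 0$, $L_u\ge 0$, and for $m\ge 6$ we have $L_{m-6}\ge 0$, hence $L_u\ge 0\ge -L_{m-6}$; the small cases $m\in\{4,5\}$ are a direct check ($L_0=2\ge -3$ and $L_1=1\ge 1$). When $u<0$ and $m$ is even, $u$ is even so $L_u=L_{|u|}>0\ge -L_{m-6}$. When $u<0$ and $m$ is odd, $u$ is odd so $L_u=-L_{|u|}$, and the required inequality becomes $L_{|u|}\le L_{m-6}$; the sharper bound $u\ge 6-m$ gives $|u|\le m-6$, and both $|u|$ and $m-6$ are nonnegative odd integers, so odd-index monotonicity closes the case.

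The whole proof is bookkeeping: there is no substantive obstacle beyond keeping the parity of $m$, the sign of the index, and the two possible values of the lower bound straight at once. The only mildly delicate step is recognising that the right-hand sides $L_{m-5}$ and $L_{m-6}$ are precisely the bounds one obtains by pushing the extreme admissible $y$ through the reflection identity, which is what makes the inequalities tight (equality occurs, for instance, when $m=5,y=1$ on the right, and when $m=4,y=1$ on the left).
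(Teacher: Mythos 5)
Your proposal is correct and rests on the same two ingredients as the paper's own argument: the reflection identity $L_{-n}=(-1)^nL_n$ and the monotonicity of the Lucas numbers along indices of fixed parity. The only difference is organizational --- you split on the sign of the index and reflect negative indices to nonnegative ones, while the paper splits on the parity of $m$ and locates the extremum of $L_{m-4y-1}$ (resp.\ $L_{m-4y}$) at an endpoint of the admissible range of $y$, using the symmetry and convexity of the bilateral even-indexed subsequence --- so the two proofs are essentially the same.
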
  
\begin{proof} 
First, suppose $m$ is even. We use the fact that $L_u<L_v$ for any odd integers $u,v$ with $u<v$ to see that the maximum possible value of $L_{m-4y-1}$ is obtained when $y=1$. That is, $L_{m-4y-1}\leq L_{m-5}$. It is easy to see that $L_u\geq 2$ for all even integers $u$, so $L_{m-4y}\geq 2>-2\geq -L_{m-6}$. 
\par 
Next, suppose $m$ is odd. We use the fact that $L_{-u}=L_u<L_v=L_{-v}$ for any nonnegative even integers $u,v$ with $u<v$ to find that the bilateral  sequence $\ldots,L_{-4},L_{-2},L_0,L_2,L_4,\ldots$ is strictly convex. Therefore, the maximum possible value of $L_{m-4y-1}$ occurs when $y=1$ or when $y=\left\lfloor\frac{m-2}{2}\right\rfloor=\frac{m-3}{2}$. Because $L_{m-4(1)-1}=L_{m-5}=L_{5-m}=L_{m-4((m-3)/2)-1}$, the maximum possible value of $L_{m-4y-1}$ is $L_{m-5}$. As $L_u<L_v$ for any odd integers $u,v$ with $u<v$, the smallest possible value of $L_{m-4y}$ occurs when $y=\frac{m-3}{2}$. That is, $L_{m-4y}\geq L_{m-4((m-3)/2)}=L_{6-m}$. Now, $L_{-u}=-L_u$ for all odd integers $u$, so $L_{m-4y}\geq L_{6-m}=-L_{m-6}$.   
\end{proof} 
\begin{lemma} \label{Lem14}
If $m$ and $y$ are integers such that $m\geq 4$ and $1\leq y\leq\left\lfloor\frac{m-2}{2}\right\rfloor$, then \[\frac{L_{m-2}+L_{m-4y}}{5F_{m-1}}>\sqrt5-2.\] Furthermore, if $m\geq 4$ and $m\not\in\{5,7\}$, then \[\frac{L_{m-2}+L_{m-4}}{5F_{m-1}}\geq \frac{8}{21}.\]
\end{lemma}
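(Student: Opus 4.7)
My plan is to recognize both quantities as Fibonacci ratios via the classical identity $5F_n=L_{n-1}+L_{n+1}$, which immediately gives $5F_{m-1}=L_{m-2}+L_m$ (the denominator throughout) and $L_{m-2}+L_{m-4}=5F_{m-3}$ (collapsing the numerator of the second statement). After this reduction both inequalities become statements about Fibonacci ratios, which I will verify using Binet's formulas. For the first inequality the parities of $m$ must be treated separately, because Lemma \ref{Lem13} gives a tight bound on $L_{m-4y}$ only when $m$ is odd, whereas for even $m$ one has the much stronger (but immediate) fact $L_{m-4y}\geq L_0=2$.

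For odd $m$, I would apply Lemma \ref{Lem13} to obtain $L_{m-2}+L_{m-4y}\geq L_{m-2}-L_{m-6}$, and then expand $L_{m-2}=L_{m-3}+L_{m-5}+L_{m-6}$ (by applying the Fibonacci-like recurrence twice) to obtain $L_{m-2}-L_{m-6}=L_{m-3}+L_{m-5}=5F_{m-4}$; the desired inequality then reduces to $F_{m-4}/F_{m-1}>\phi^{-3}=\sqrt5-2$. Binet's formula gives $\sqrt 5\,(F_{m-4}\phi^3-F_{m-1})=\overline{\phi}^{m-1}-\phi^3\overline{\phi}^{m-4}$, which is positive because for odd $m$ the exponent $m-1$ is even (so $\overline{\phi}^{m-1}>0$) while $m-4$ is odd (so $\overline{\phi}^{m-4}<0$). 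For even $m$ the estimate $L_{m-4y}\geq 2$ gives $L_{m-2}+L_{m-4y}\geq L_{m-2}+2$, and the inequality reduces via $5F_{m-1}=L_{m-2}+L_m$ to $(3-\sqrt5)L_{m-2}-(\sqrt5-2)L_m+2>0$; a direct Binet expansion writes the first two terms as $\frac{7-3\sqrt5}{2}\phi^{m-2}+\frac{17-7\sqrt5}{2}\overline{\phi}^{m-2}$, and since both coefficients are positive (by $49>45$ and $289>245$) and both $\phi^{m-2},\overline{\phi}^{m-2}$ are positive for even $m$, positivity holds even without the extra $+2$.

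For the second inequality the reduction yields the claim $F_{m-3}/F_{m-1}\geq 8/21$. For even $m$, Binet gives $F_{m-3}\phi^2-F_{m-1}=-\overline{\phi}^{m-3}$, which is positive because $m-3$ is odd, so $F_{m-3}/F_{m-1}>\phi^{-2}=(3-\sqrt5)/2$; the elementary check $21(3-\sqrt5)>16$, i.e.\ $2209>2205$, then gives $\phi^{-2}>8/21$. For odd $m\geq 9$, I would invoke Catalan's identity $F_{m-1}^2-F_{m+1}F_{m-3}=(-1)^{m-3}=1$ to conclude that $F_{m-3}/F_{m-1}<F_{m-1}/F_{m+1}$, showing that the sequence $F_{m-3}/F_{m-1}$ is strictly increasing through odd $m$; since its value at $m=9$ is exactly $F_6/F_8=8/21$, the bound follows, with equality attained only at $m=9$. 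The main technical obstacle will be the even-$m$ Binet calculation in the first inequality: tracking the signs of the two Binet coefficients requires the verifications $7>3\sqrt5$ and $17>7\sqrt5$, but once these are in hand the rest is essentially bookkeeping.
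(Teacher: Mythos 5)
Your proof is correct, and it follows the same skeleton as the paper's (split on the parity of $m$; invoke Lemma \ref{Lem13} to get $L_{m-2}+L_{m-4y}\geq L_{m-2}-L_{m-6}$ when $m$ is odd; use $L_{\text{even}}\geq 2$ when $m$ is even), but the technical engine is different. The paper derives one exact decomposition, $\frac{L_{m-j}}{5F_{m-1}}=\frac{1}{\phi^{j-1}\sqrt5}+(-1)^{m-j}\frac{L_{j-1}}{5\phi^{m-1}F_{m-1}}$, and reads every needed bound off of it by choosing $j\in\{2,3,4,5\}$ and checking the sign of the correction term; the identity $\frac{1}{\phi^2\sqrt5}+\frac{1}{\phi^4\sqrt5}=\sqrt5-2$ does the work in the odd case. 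You instead collapse everything to Fibonacci-ratio inequalities via $5F_n=L_{n-1}+L_{n+1}$ (so the odd case of the first claim becomes $F_{m-4}/F_{m-1}>\phi^{-3}=\sqrt5-2$ and the second claim becomes $F_{m-3}/F_{m-1}\geq 8/21$) and verify these by tracking the sign of a single $\overline\phi$-power in Binet's formula, plus Catalan's identity for the monotonicity of $F_{m-3}/F_{m-1}$ through odd $m$. I checked your computations ($\overline\phi^{m-1}-\phi^3\overline\phi^{m-4}>0$ for odd $m$; the coefficients $\frac{7-3\sqrt5}{2}$ and $\frac{17-7\sqrt5}{2}$; the reductions $2209>2205$ and $49>45$, $289>245$) and they are all right. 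Your treatment of the second inequality is arguably the cleaner of the two: the paper asserts without proof that $\frac{(-1)^m}{\phi^{m-1}F_{m-1}}$ is minimized at $m=9$ over the allowed range, and only remarks in passing that one could use $\frac{L_{m-2}+L_{m-4}}{5F_{m-1}}=\frac{F_{m-3}}{F_{m-1}}$; you actually carry out that alternative and identify $m=9$ as the exact equality case. The trade-off is that the paper's single identity is reusable across all four instances, whereas your argument handles each reduction ad hoc.
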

\begin{proof} 
We will use the closed-form expressions $F_n=\dfrac{\phi^n-\overline\phi^n}{\sqrt5}$ and $L_n=\phi^n+\overline\phi^n$ for the Fibonacci and Lucas numbers. We also make use of the fact that $L_u\geq 2$ for any even integer $u$. For any integer $j$, we have \[\frac{L_{m-j}}{5F_{m-1}}-\frac{1}{\phi^{j-1}\sqrt5}=\frac{\phi^{m-j}+\overline\phi^{m-j}}{\left(\phi^{m-1}-\overline\phi^{m-1}\right)\sqrt5}-\frac{1}{\phi^{j-1}\sqrt5}\] \[=\frac{1}{\sqrt5}\frac{\overline\phi^{m-j}\left(\phi^{j-1}+\overline\phi^{j-1}\right)}{\phi^{j-1}\left(\phi^{m-1}-\overline\phi^{m-1}\right)}=\frac{1}{5\phi^{j-1}}\frac{\overline\phi^{m-j}L_{j-1}\sqrt5}{\phi^{m-1}-\overline\phi^{m-1}}=\frac{L_{j-1}}{5\phi^{j-1}F_{m-1}}\overline\phi^{m-j}\] \[=(-1)^{m-j}\frac{L_{j-1}}{5\phi^{m-1}F_{m-1}},\] where the last equality follows from the identity $\overline\phi=\dfrac{-1}{\phi}$. Therefore,  
\begin{equation} \label{Eq11} 
\frac{L_{m-j}}{5F_{m-1}}=\frac{1}{\phi^{j-1}\sqrt5}+(-1)^{m-j}\frac{L_{j-1}}{5\phi^{m-1}F_{m-1}}.
\end{equation} 
If $m$ is even, then we may set $j=2$ in \eqref{Eq11} and invoke the inequality $L_{m-4y}\geq 2$ (since $m-4y$ is even) to find that \[\frac{L_{m-2}+L_{m-4y}}{5F_{m-1}}\geq\frac{L_{m-2}+2}{5F_{m-1}}>\frac{L_{m-2}}{5F_{m-1}}=\frac{1}{\phi\sqrt5}+(-1)^{m-2}\frac{L_1}{5\phi^{m-1}F_{m-1}}\] \[=\frac{1}{\phi\sqrt5}+\frac{1}{5\phi^{m-1}F_{m-1}}>\frac{1}{\phi\sqrt5}>\sqrt5-2.\] Let us now suppose $m$ is odd. By Lemma \ref{Lem13}, \[\frac{L_{m-2}+L_{m-4y}}{5F_{m-1}}\geq\frac{L_{m-2}-L_{m-6}}{5F_{m-1}}=\frac{L_{m-3}+L_{m-5}}{5F_{m-1}}.\] 
Using \eqref{Eq11}, once with $j=3$ and once with $j=5$, yields \[\frac{L_{m-3}+L_{m-5}}{5F_{m-1}}=\frac{1}{\phi^2\sqrt5}+(-1)^{m-3}\frac{L_2}{5\phi^{m-1}F_{m-1}}+\frac{1}{\phi^4\sqrt5}+(-1)^{m-5}\frac{L_4}{5\phi^{m-1}F_{m-1}}\] \[>\frac{1}{\phi^2\sqrt5}+\frac{1}{\phi^4\sqrt5}=\sqrt5-2\] because $(-1)^{m-3}\dfrac{L_2}{5\phi^{m-1}F_{m-1}}$ and $(-1)^{m-5}\dfrac{L_4}{5\phi^{m-1}F_{m-1}}$ are positive. This completes the proof of the first inequality in the statement of the lemma. 
\par 
Assume, now, that $m\geq 4$ and $m\not\in\{5,7\}$. We may use \eqref{Eq11}, once with $j=2$ and again with $j=4$, to see that \[\frac{L_{m-2}+L_{m-4}}{5F_{m-1}}=\frac{1}{\phi\sqrt5}+(-1)^{m-2}\frac{L_1}{5\phi^{m-1}F_{m-1}}+\frac{1}{\phi^3\sqrt5}+(-1)^{m-4}\frac{L_3}{5\phi^{m-1}F_{m-1}}\] \[=\frac{1}{\phi\sqrt5}+\frac{1}{\phi^3\sqrt5}+\frac{(-1)^m(L_1+L_3)}{5\phi^{m-1}F_{m-1}}=\frac{1}{\phi\sqrt5}+\frac{1}{\phi^3\sqrt5}+\frac{(-1)^m}{\phi^{m-1}F_{m-1}}.\] For $m\geq 4$ and $m\not\in\{5,7\}$, it is easy to see that $\dfrac{(-1)^m}{\phi^{m-1}F_{m-1}}$ attains its minimum when $m=9$. Thus, \[\frac{L_{m-2}+L_{m-4}}{5F_{m-1}}\geq\frac{1}{\phi\sqrt5}+\frac{1}{\phi^3\sqrt5}+\frac{(-1)^9}{\phi^8F_8}=\frac{8}{21},\] where this last equality is easily verified. Alternatively, we could have proven the second inequality stated in the lemma using the fact that $\dfrac{L_{m-2}+L_{m-4}}{5F_{m-1}}=\dfrac{F_{m-3}}{F_{m-1}}$. 
\end{proof} 
\begin{lemma} \label{Lem17}
Let $k$ and $m$ be integers such that $k\geq 5$ and $2\leq m\leq k-1$. Define a function $\Theta_{k,m}\colon\mathbb R\rightarrow\mathbb R$ by \[\Theta_{k,m}(x)=\frac{J_{k,m}(b^k+x)}{(b^k+x)^{\log_b\phi}}.\] For any $x\geq 0$,
\[\frac{d}{dx}\Theta_{k,m}(b^k+x)\geq 0.\] 
\end{lemma}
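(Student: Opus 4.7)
Let $\alpha=\log_b\phi$, which lies in $(0,1)$ since $\phi<2\leq b$. Setting $u=b^k+x$, I write $J_{k,m}(u)=Au+B$, where
\[A=J_{k,m}'(u)=\frac{(k-m)F_{m-1}}{h_{m+1}-h_m}>0\qquad\text{and}\qquad B=H_k(m)-A\,G_k(m).\]
A direct differentiation yields
\[\frac{d\Theta_{k,m}}{dx}=\frac{(1-\alpha)Au-\alpha B}{u^{\alpha+1}}.\]
The denominator is positive, and the numerator is linear in $u$ with positive leading coefficient $(1-\alpha)A$, so it suffices to check nonnegativity at the minimum $u=b^k$ (i.e.\ at $x=0$).

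The plan is to establish the stronger inequality $B\leq 0$, which makes the conclusion immediate since then $(1-\alpha)Ab^k\geq 0\geq\alpha B$. By cross-multiplication, $B\leq 0$ is equivalent to the monotonicity relation
\[\frac{H_k(m)}{G_k(m)}\leq\frac{H_k(m+1)}{G_k(m+1)}.\]
Using the easily verified identity $H_k(m+1)-H_k(m)=(k-m)F_{m-1}$ (immediate from Definition \ref{Def4}) together with $G_k(m)=b^k+h_m$, this inequality rearranges algebraically to
\[(k-m)\bigl[F_{m-1}b^k+F_{m+1}h_m-F_m h_{m+1}\bigr]\geq F_{m+2}(h_{m+1}-h_m).\]

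To verify this final inequality, I substitute Lemma \ref{Lem16} (which gives $h_{m+1}-h_m=(b^m+(-1)^m b)/(b+1)$) together with the closed form $h_m=(b^m+i_m)/(b^2-1)$ from the proof of Corollary \ref{Cor1}, reducing the inequality to a polynomial combination of powers of $b$ and Fibonacci numbers. The dominant term $(k-m)F_{m-1}b^k$ easily defeats the right-hand side whenever $k-m\geq 2$. The main obstacle is the extreme case $k=m+1$, in which the factor $(k-m)=1$ cannot absorb the negative contribution $F_{m+1}h_m-F_m h_{m+1}$ (of order $b^{m-1}$ with sign sensitive to the parity of $m$). For this case Binet's formula gives the asymptotic ratio
\[\frac{F_{m-1}b^{m+1}+F_{m+1}h_m-F_m h_{m+1}}{F_{m+2}(h_{m+1}-h_m)}\;\longrightarrow\;\frac{b^3-b+\phi^2-b\phi}{(b-1)\phi^3},\]
which one checks is strictly greater than $1$ for every $b\geq 2$: its derivative in $b$ simplifies (using $\phi^2=\phi+1$) to $b^2(2b-3)/[(b-1)^2\phi^3]>0$, so the minimum occurs at $b=2$ and equals $(7-\phi)/\phi^3\approx 1.27$. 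Finitely many small-$m$ exceptions are then easily verified by hand.
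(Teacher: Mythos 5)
Your setup is correct and in fact aims at something strictly stronger than what the paper establishes: the differentiation giving the numerator $(1-\alpha)Au-\alpha B$, the observation that it suffices to check $u=b^k$, and the chain of equivalences from $B\leq 0$ to $H_k(m)/G_k(m)\leq H_k(m+1)/G_k(m+1)$ to the displayed inequality in $(k-m)$ are all right, and $B\leq 0$ is indeed true throughout the stated range. The gap is that the final inequality is asserted rather than proved. For $k-m\geq 2$ you say the term $(k-m)F_{m-1}b^k$ ``easily defeats'' the rest without writing the estimate; this is true but not automatic, since $F_{m+1}h_m-F_mh_{m+1}$ is genuinely negative and of order $b^{m+1}/(b^2-1)$. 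More seriously, for $k=m+1$ you compute only the \emph{limit} of the relevant ratio as $m\to\infty$. That limit exceeds $1$, but the convergence is not monotone (both $h_m$ and the Fibonacci numbers carry oscillating corrections $(-1)^m$ and $\overline\phi^{\,m}$), the margin is thin (for $b=2$ the ratio dips to about $7/6$ at $m=4$ while the limit is about $1.27$), and you give no effective error bound identifying which $m$ are the ``finitely many small-$m$ exceptions,'' nor do you check any of them. Since the claim must also hold for every $b\geq 2$, ``finitely many exceptions per $b$'' is not by itself a finite verification (the $b\geq 3$ cases would need a separate uniform argument, which admittedly has plenty of room). As written, the crucial last step is a plausibility argument, not a proof.

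It is worth noting why the paper gets away with much cruder estimates: it does not prove $B\leq 0$ at all, but only the weaker pointwise statement $(b^k+x)T_k(m)\geq J_{k,m}(b^k+x)\log_b\phi$ (your $A$ is its $T_k(m)$), which is exactly nonnegativity of the derivative's numerator. The extra factor $\log_b\phi<1$ on the right-hand side buys roughly a $44\%$ cushion when $b=2$, and that cushion is precisely what lets the blunt bounds $H_k(m)\leq 6(k-m)F_{m-1}$, $h_m\geq b^{m-2}$, and $h_{m+1}-h_m\leq b^{m-1}$ close the argument in a few lines with no asymptotics and no case split on $k-m$. Your stronger claim $B\leq 0$ forfeits that cushion (with the same crude bounds one would need $b^2+b^{-1}\geq 6$, which fails for $b=2$), so it genuinely requires the sharper estimates you only gesture at. To repair your proof, either carry out the exact computation with $h_m=(b^m+i_m)/(b^2-1)$ and bound the oscillating terms explicitly for all $m\geq 2$, or retreat to the paper's weaker inequality and use the $\log_b\phi$ slack.
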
 
\begin{proof} 
We begin with the easily-proven fact that $\dfrac{F_{m-2}}{F_{m-1}}\leq\dfrac23$ for all integers $m\geq 4$. From this, we have \[\frac{H_k(m)}{(k-m)F_{m-1}}=\frac{(k-m)F_m+F_{m+2}}{(k-m)F_{m-1}}\leq\frac{(k-m)F_m+(k-m)F_{m+2}}{(k-m)F_{m-1}}\] \[=\frac{F_m+F_{m+2}}{F_{m-1}}=\frac{4F_{m-1}+3F_{m-2}}{F_{m-1}}=4+3\frac{F_{m-2}}{F_{m-1}}\leq 6.\] One may easily show that \[b^2+b^{-1}\log_b\phi\geq 6\log_b\phi\] for any choice of an integer $b\geq 2$ (if $b\geq 3$, this follows from the observation that $b^2>6\log_b\phi$). In addition, using Definition \ref{Def2} and Lemma \ref{Lem16}, we have $h_m\geq b^{m-2}$ and \[h_{m+1}-h_m\leq\frac{b^m+b}{b+1}\leq\frac{b^m+b^{m-1}}{b+1}=b^{m-1}.\]  
Therefore, \[b^k+h_m\log_b\phi\geq b^k+b^{m-2}\log_b\phi=b^{m-1}(b^{k-m+1}+b^{-1}\log_b\phi)\] \[\geq b^{m-1}(b^2+b^{-1}\log_b\phi)\geq 6b^{m-1}\log_b\phi\geq6(h_{m+1}-h_m)\log_b\phi\] \[\geq\frac{H_k(m)}{(k-m)F_{m-1}}(h_{m+1}-h_m)\log_b\phi.\] Let $T_k(m)=\dfrac{(k-m)F_{m-1}}{h_{m+1}-h_m}$ so that we obtain $b^k+h_m\log_b\phi\geq \dfrac{H_k(m)}{T_k(m)}\log_b\phi$. Choose some $x\geq 0$. We have \[xT_k(m)(1-\log_b\phi)\geq 0\geq T_k(m)\left[\frac{H_k(m)}{T_k(m)}\log_b\phi-h_m\log_b\phi-b^k\right]\] \[=\left[H_k(m)-T_k(m)h_m\right]\log_b\phi-b^kT_k(m).\] Adding $xT_k(m)\log_b\phi+b^kT_k(m)$ to each side of this last inequality yields \[(b^k+x)T_k(m)\geq\log_b\phi\left(xT_k(m)+\left[H_k(m)-T_k(m)h_m\right]\right),\] which we may rewrite as 
\begin{equation} \label{Eq20}
(b^k+x)T_k(m)-\log_b\phi\left[H_k(m)+T_k(m)(x-h_m)\right]\geq 0.
\end{equation} 
\par 
Now, referring to the paragraph immediately following Definition \ref{Def8}, we see that $J_{k,m}(b^k+x)=T_k(m)(x-h_m)+H_k(m)$. Therefore, \eqref{Eq20} becomes $(b^k+x)T_k(m)-J_{k,m}(b^k+x)\log_b\phi\geq 0$. In addition, $\dfrac{d}{dx}J_{k,m}(b^k+x)=T_k(m)$. Consequently, \[\frac{d}{dx}\frac{J_{k,m}(b^k+x)}{(b^k+x)^{\log_b\phi}}=\frac{(b^k+x)^{\log_b\phi}T_k(m)-J_{k,m}(b^k+x)\log_b\phi(b^k+x)^{\log_b\phi-1}}{(b^k+x)^{2\log_b\phi}}\] \[=\frac{(b^k+x)T_k(m)-J_{k,m}(b^k+x)\log_b\phi}{(b^k+x)^{\log_b\phi+1}}\geq 0.\] 
\end{proof} 
We are finally ready to prove the main results of this section. 
\begin{theorem} \label{Thm4} 
If $k,m$, and $n$ are integers such that $2\leq m<k$ and $G_k(m)<n<G_k(m+1)$, then \[s_b(n)<J_{k,m}(n).\]
\end{theorem}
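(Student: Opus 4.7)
The plan is to partition the range $(G_k(m), G_k(m+1))$ into three sub-intervals based on the value of $x = n - b^k \in (h_m, h_{m+1})$, and on each sub-interval to combine Theorem \ref{Thm3} with the appropriate auxiliary lemma from this section. Since $h_{m+1} = h_{m-1} + b^{m-1}$ by Lemma \ref{Lem16}, the natural split is at $x = b^{m-1}$ and at $x = b^{m-1} + h_{m-3} = G_{m-1}(m-3)$; I would first dispose of the edge cases $m \in \{2,3\}$ and $k = m+1$ by direct computation.

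In the upper piece $G_{m-1}(m-3) \le x \le h_{m+1} - 1$, the inclusion $n \in I(k, m-1, \lfloor (m-2)/2 \rfloor)$ holds, so Theorem \ref{Thm3} bounds $s_b(n) \le V(k, m-1, \lambda)$ with equality only at the unique maximizer $\nu = \gamma(k, m-1, \lambda) + 1$. Computing $\nu - G_k(m)$ explicitly from Definition \ref{Def6} makes $J_{k,m}(\nu)$ explicit, and the key inequality $V(k, m-1, \lambda) < J_{k,m}(\nu)$ reduces, via Binet's formula, to the bound $(L_{m-2} + L_{m-4\lambda})/(5F_{m-1}) > \sqrt{5} - 2$ supplied by Lemma \ref{Lem14}. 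For $n > \nu$ in this piece, monotonicity of $J_{k,m}$ gives $J_{k,m}(n) > J_{k,m}(\nu) > V(k, m-1, \lambda) \ge s_b(n)$; for $n < \nu$, one iterates the argument on a smaller interval $I(k, r', y')$ whose upper endpoint lies strictly below $\nu$ but still above $n$, obtaining a tighter $V$-bound which is again compared to $J_{k,m}(n)$ via Lemma \ref{Lem14}. The $b = 2$, $m \in \{5,7\}$ anomalies of Lemma \ref{Lem14} affect only the single boundary value $n = 2^k + G_{m-1}(m-3)$, which is absorbed by Lemma \ref{Lem20}.

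In the middle piece $b^{m-1} \le x < G_{m-1}(m-3)$, Lemma \ref{Lem19} delivers $s_2(n) < H_k(m)$ directly for $b = 2$, whence $s_2(n) < H_k(m) < J_{k,m}(n)$ follows from the strict monotonicity of $J_{k,m}$ on $[G_k(m), \infty)$; for $b \ge 3$ an analogous bound follows from Lemma \ref{Lem15} together with the routine base-$b$ extension of Lemma \ref{Lem18}. In the lower piece $h_m < x < b^{m-1}$, Lemma \ref{Lem15} gives $s_b(x) \le F_m$ and Lemma \ref{Lem18} upgrades this for $b = 2$ to $s_2(n) \le F_m(k - m + 3)$, after which a direct comparison with $J_{k,m}(n)$ through the formula in Definition \ref{Def8} completes the piece.

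The main obstacle will be the lower piece, since $J_{k,m}(n) - H_k(m)$ can be very small for $n$ just above $G_k(m)$ while $F_m(k - m + 3)$ is comfortably larger than $H_k(m)$ in general; closing this gap requires using Proposition \ref{Prop1} to pinpoint the few $x \in (h_m, b^{m-1})$ at which $s_b(x) = F_m$ and verifying the inequality at those values by hand, while observing that elsewhere in the piece $s_b(x) < F_m$ strictly so that the Lemma \ref{Lem18} bound tightens correspondingly. A secondary technical point is the $n < \nu$ recursion in the upper piece, which requires careful bookkeeping of the nested intervals $I(k, r', y')$ to make sure that the inductively obtained bounds land below the \emph{linear} function $J_{k,m}$ rather than merely below the constant $V(k, m-1, \lambda)$.
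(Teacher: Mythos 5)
Your decomposition of $(G_k(m),G_k(m+1))$ and your treatment of the upper piece track the paper's argument fairly closely: the paper likewise reduces the range $n-b^k>b^{m-1}$ to the inequality $\frac{h_{m+1}-(n-b^k)}{h_{m+1}-h_m}\leq\frac{L_{m-2}+L_{m-4y}}{5F_{m-1}}$, closes it with Lemma \ref{Lem14}, absorbs the $b=2$ anomalies with Lemmas \ref{Lem19} and \ref{Lem20}, and handles the possibility that the maximizer of $V(k,m-1,\cdot)$ occurs at some $x<y$ in a separate case. The genuine gap is in your lower piece $h_m<n-b^k<b^{m-1}$. There the bound you extract from Lemmas \ref{Lem15} and \ref{Lem18} is $s_2(n)\leq s_2(n-2^k)(k-m+3)\leq F_m(k-m+3)$, while $J_{k,m}(n)$ exceeds $H_k(m)=F_m(k-m)+F_{m+2}$ by less than $1$ when $n$ is just above $G_k(m)$ and $m$ is large; since $F_m(k-m+3)-H_k(m)=3F_m-F_{m+2}=F_{m-2}$, the bound overshoots by $F_{m-2}$ regardless of $k$. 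Your proposed repair --- isolate via Proposition \ref{Prop1} the finitely many $x$ with $s_b(x)=F_m$ and use $s_b(x)\leq F_m-1$ elsewhere --- does not close this: decrementing $s_2(x)$ by one lowers the product bound by only $k-m+3$, so the repaired bound still overshoots by $F_{m-2}-(k-m+3)$, which is positive whenever $F_{m-2}>k-m+3$ (for instance $m=k-1$ and $k\geq 8$). No lemma in the paper supplies the much stronger pointwise bound on $s_b(x)$ that this route would require.

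The paper disposes of this piece in one line by a route you did not use: taking $y$ to be the \emph{smallest} index with $n\leq b^k+\sum_{i=0}^{y}b^{m-1-2i}$, your lower piece is exactly the case $y=0$, and Theorem \ref{Thm3} together with the identity $V(k,m-1,0)=H_k(m)$ yields $s_b(n)\leq\mu(k,m-1,0)=H_k(m)=J_{k,m}(G_k(m))<J_{k,m}(n)$. In other words, the sharp statement $\max\{s_b(n)\colon b^k<n\leq b^k+b^{m-1}\}=H_k(m)$ is already contained in Theorem \ref{Thm3}, and the cruder product bound of Lemma \ref{Lem18} cannot substitute for it. A secondary issue: for $b\geq3$ you invoke a ``routine base-$b$ extension of Lemma \ref{Lem18}'' for your middle piece, but no such extension is proved in the paper and none is needed --- for $b\geq3$ the ratio $\frac{h_{m+1}-(n-b^k)}{h_{m+1}-h_m}$ is already below $\sqrt5-2$ for every $n>b^k+b^{m-1}$, so Lemma \ref{Lem14} applies directly and Lemmas \ref{Lem18}--\ref{Lem20} are only ever needed when $b=2$.
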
 
\begin{proof}
By Definition \ref{Def2} and Definition \ref{Def4}, \[n\leq G_k(m+1)-1=b^k+\sum_{i=0}^{\left\lfloor\frac{m-2}{2}\right\rfloor}b^{m-1-2i}.\] Therefore, we may let $y$ be the smallest element of the set $\left\{0,1,\ldots,\left\lfloor\frac{m-2}{2}\right\rfloor\right\}$ such that $n\leq b^k+\displaystyle{\sum_{i=0}^yb^{m-1-2i}}$. Referring to Definition \ref{Def3}, we find that $s_b(n)\leq\mu(k,m-1,y)$ because $n\in I(k,m-1,y)$. Hence, it suffices to prove that 
\begin{equation} \label{Eq10} 
\mu(k,m-1,y)<J_{k,m}(n). 
\end{equation} It follows from Theorem \ref{Thm3}, Definition \ref{Def7}, Definition \ref{Def4}, and the identity $F_u=\dfrac15(L_{u+1}+L_{u-1})$ that \[\mu(k,m-1,0)=V(k,m-1,0)=\frac15((k-m+1)(2L_{m+1}-L_m)+2L_m+L_{m+1})\] \[=(k-m)\left(\frac15(2L_{m+1}-L_m)\right)+\frac15(2L_{m+2}+L_{m-1})\] \[=(k-m)\left(\frac15(L_{m+1}+L_{m-1})\right)+\frac15(L_{m+3}+L_{m+1})\] \[=(k-m)F_m+F_{m+2}=H_k(m).\] This shows that if $y=0$, then \[s_b(n)\leq\mu(k,m-1,0)=H_k(m)=J_{k,m}(G_k(m))<J_{k,m}(n),\] where we have used the trivial fact that $J_{k,m}$ is an increasing function. Therefore, we may assume $y>0$. Because of the way we chose $y$, this means that $n>b^k+\displaystyle{\sum_{i=0}^{y-1}b^{m-1-2i}}\geq b^k+b^{m-1}$. Note that this also forces $m\geq 4$ because $y\leq\left\lfloor\frac{m-2}{2}\right\rfloor$. By a similar token, it follows from Lemma \ref{Lem19} that \[s_b(n)=s_2(n)\leq H_k(m)=J_{k,m}(G_k(m))<J_{k,m}(n)\] if $b=2$ and $n<2^k+G_{m-1}(m-3)$. Therefore, we may assume $n\geq 2^k+G_{m-1}(m-3)$ if $b=2$. If $b=2$, $m\in\{5,7\}$, and $n=2^k+G_{m-1}(m-3)$, then the desired result is simply Lemma \ref{Lem20}. This means that if $b=2$ and $m\in\{5,7\}$, then we may assume $n\geq 2^k+G_{m-1}(m-3)+1$. We have two cases to consider.
\par 
\noindent 
Case 1: In this case, assume that $\mu(k,m-1,y)=V(k,m-1,y)$. Recall Definition \ref{Def7} to see that \[V(k,m-1,y)=\frac 15((k-m+1)(2L_{m+1}-L_{m-4y})+2L_m+L_{m-4y+1})\] \[=\frac 15((k-m)(2L_{m+1}-L_{m-4y})+2L_{m+2}+L_{m-4y-1}).\] We now use the fact that $L_{m-4y-1}<L_{m-1}$ (which follows immediately from Lemma \ref{Lem13}) as well as the identity $\dfrac15(L_{u+1}+L_{u-1})=F_u$ to see that \[V(k,m-1,y)<\frac15((k-m)(2L_{m+1}-L_{m-4y})+2L_{m+2}+L_{m-1})\] \[=\frac15(k-m)(2L_{m+1}-L_{m-4y})+\frac15(L_{m+3}+L_{m+1})\] \[=\frac15(k-m)(2L_{m+1}-L_{m-4y})+F_{m+2}.\] Because \[J_{k,m}(n)=\frac{F_{m-1}(k-m)}{h_{m+1}-h_m}\left(n-G_k(m)\right)+H_k(m)\] \[=\frac{F_{m-1}(k-m)}{h_{m+1}-h_m}\left(n-b^k-h_m\right)+F_m(k-m)+F_{m+2},\] we simply need to show that \[\frac 15(k-m)(2L_{m+1}-L_{m-4y})\leq\frac{F_{m-1}(k-m)}{h_{m+1}-h_m}\left(n-b^k-h_m\right)+F_m(k-m)\] in order to obtain \eqref{Eq10}. After dividing each side by $k-m$ and using the identity \[\frac 15(2L_{m+1}-L_{m-4y})=F_{m+1}-\dfrac15(L_{m-2}+L_{m-4y}),\] we find that this last inequality becomes \[F_{m+1}-\frac15(L_{m-2}+L_{m-4y})\leq F_{m-1}\left(1-\frac{h_{m+1}-(n-b^k)}{h_{m+1}-h_m}\right)+F_m,\] which we may rewrite as 
\begin{equation} \label{Eq12} 
\frac{h_{m+1}-(n-b^k)}{h_{m+1}-h_m}\leq\frac{L_{m-2}+L_{m-4y}}{5F_{m-1}}
\end{equation} 
after subtracting $F_{m+1}$ from each side and then rearranging terms. We will prove \eqref{Eq12} in each of the following two subcases. 
\par 
\noindent 
Subcase 1: In this subcase, assume $b=2$ and $y=1$. Recall that we mentioned at the beginning of the proof that we may assume $n\geq 2^k+G_{m-1}(m-3)$ when $b=2$. Furthermore, we may assume $n\geq 2^k+G_{m-1}(m-3)+1$ if $m\in\{5,7\}$. If $m=5$, then \[\frac{h_{m+1}-(n-b^k)}{h_{m+1}-h_m}\leq\frac{h_{m+1}-(G_{m-1}(m-3)+1)}{h_{m+1}-h_m}=\frac{h_6-(G_4(2)+1)}{h_6-h_5}\] \[=\frac{(2^4+2^2+1)-(2^4+1+1)}{(2^4+2^2+1)-(2^3+2+1)}=\frac{3}{10}<\frac13=\frac{L_3+L_1}{5F_4}=\frac{L_{m-2}+L_{m-4y}}{5F_{m-1}}.\] If $m=7$, then 
\[\frac{h_{m+1}-(n-b^k)}{h_{m+1}-h_m}\leq\frac{h_{m+1}-(G_{m-1}(m-3)+1)}{h_{m+1}-h_m}=\frac{h_8-(G_6(4)+1)}{h_8-h_7}\] \[=\frac{(2^6+2^4+2^2+1)-(2^6+2^2+1+1)}{(2^6+2^4+2^2+1)-(2^5+2^3+2+1)}=\frac{5}{14}<\frac38=\frac{L_5+L_3}{5F_6}\] \[=\frac{L_{m-2}+L_{m-4y}}{5F_{m-1}}.\] This proves \eqref{Eq12} if $m\in\{5,7\}$, so we will assume $m\not\in\{5,7\}$. By Definition \ref{Def2}, \[h_{m+1}=1+\sum_{i=0}^{\left\lfloor\frac{m-2}{2}\right\rfloor}2^{m-1-2i}=1+2^{m-1}+2^{m-3}+\sum_{i=2}^{\left\lfloor\frac{m-2}{2}\right\rfloor}2^{m-1-2i}\] \[=1+2^{m-1}+2^{m-3}+\sum_{i=0}^{\left\lfloor\frac{m-6}{2}\right\rfloor}2^{m-5-2i}=2^{m-3}+2^{m-1}+h_{m-3}\] \[=2^{m-3}+G_{m-1}(m-3).\] Therefore, using Lemma \ref{Lem16} to write $h_{m+1}-h_m=\dfrac{2^m+(-1)^m\cdot2}{3}$, we have
\[\frac{h_{m+1}-(n-b^k)}{h_{m+1}-h_m}\leq\frac{h_{m+1}-G_{m-1}(m-3)}{h_{m+1}-h_m}=\frac{2^{m-3}}{h_{m+1}-h_m}=3\frac{2^{m-3}}{2^m+(-1)^m\cdot2}.\] If $m$ is even, then \[\frac{h_{m+1}-(n-b^k)}{h_{m+1}-h_m}\leq 3\frac{2^{m-3}}{2^m+2}<3\frac{2^{m-3}}{2^m}=\frac38<\frac{8}{21}.\] If $m$ is odd, then $m\geq 9$, so \[\frac{h_{m+1}-(n-b^k)}{h_{m+1}-h_m}\leq 3\frac{2^{m-3}}{2^m-2}=\frac{3}{8-2^{4-m}}\leq\frac{3}{8-2^{4-9}}=\frac{32}{85}<\frac{8}{21}.\] Either way, $\dfrac{h_{m+1}-(n-b^k)}{h_{m+1}-h_m}<\dfrac{8}{21}$. Lemma \ref{Lem14} tells us that \\ $\displaystyle{\frac{8}{21}\leq\frac{L_{m-2}+L_{m-4}}{5F_{m-1}}}$, so we obtain \eqref{Eq12}. 
\par 
\noindent
Subcase 2: In this subcase, suppose either $y>1$ or $b\neq 2$. It follows from our choice of $y$ that $n>b^k+\displaystyle{\sum_{i=0}^{y-1}b^{m-1-2i}}$. Using Definition \ref{Def2} and Lemma \ref{Lem16}, we see that \[h_{m+1}-(n-b^k)=1+\sum_{i=0}^{\lfloor\frac{m-2}{2}\rfloor}b^{m-1-2i}-(n-b^k)<1+\sum_{i=0}^\infty b^{m-1-2i}-(n-b^k)\] \[<1+\sum_{i=0}^\infty b^{m-1-2i}-\sum_{i=0}^{y-1}b^{m-1-2i}=1+\sum_{i=y}^\infty b^{m-1-2i}=1+\frac{b^{m+1-2y}}{b^2-1}\] and \[h_{m+1}-h_m\geq\frac{b^m-b}{b+1}.\]  Therefore, 
\begin{equation} \label{Eq13} 
\frac{h_{m+1}-(n-b^k)}{h_{m+1}-h_m}<\frac{1+b^{m+1-2y}/(b^2-1)}{(b^m-b)/(b+1)}=\frac{b^2+b^{m+1-2y}-1}{(b^m-b)(b-1)}.
\end{equation} If we treat $m$ as a continuous real variable, then \[\frac{\partial}{\partial m}\left(\frac{b^2+b^{m+1-2y}-1}{(b^m-b)(b-1)}\right)=\frac{1}{b-1}\frac{\partial}{\partial m}\left(\frac{b^2+b^{m+1-2y}-1}{b^m-b}\right)\] \[=\frac{1}{b-1}\frac{(b^m-b)b^{m+1-2y}\log b-(b^2+b^{m+1-2y}-1)b^m\log b}{(b^m-b)^2}\] \[=\frac{\log b}{(b-1)(b^m-b)^2}(-b^{m+2-2y}-b^{m+2}+b^m)<0.\] This shows that $\dfrac{b^2+b^{m+1-2y}-1}{(b^m-b)(b-1)}$ is decreasing in $m$. 
\par
Suppose $y\geq 2$. This forces $m\geq 6$ because $y\leq\left\lfloor\frac{m-2}{2}\right\rfloor$. Because $\dfrac{b^2+b^{m+1-2y}-1}{(b^m-b)(b-1)}$ is decreasing in $m$, we see from \eqref{Eq13} that \[\frac{h_{m+1}-(n-b^k)}{h_{m+1}-h_m}<\dfrac{b^2+b^{m+1-2y}-1}{(b^m-b)(b-1)}\leq\frac{b^2+b^{7-2y}-1}{(b^6-b)(b-1)}\leq\frac{b^2+b^3-1}{(b^6-b)(b-1)}.\] If $b=2$, then \[\frac{h_{m+1}-(n-b^k)}{h_{m+1}-h_m}<\frac{b^2+b^3-1}{(b^6-b)(b-1)}=\frac{11}{62}<\sqrt5-2.\] If $b\geq 3$, then \[\frac{h_{m+1}-(n-b^k)}{h_{m+1}-h_m}<\frac{b^2+b^3-1}{(b^6-b)(b-1)}<\frac{b^3+b^2+b+1+b^{-1}}{(b^6-b)(b-1)}\] \[=\frac{b^4+b^3+b^2+b+1}{b^2(b^5-1)(b-1)}=\frac{1}{b^2(b-1)^2}\leq\frac{1}{3^2(3-1)^2}=\frac{1}{36}<\sqrt5-2.\] No matter the value of $b$, we find from Lemma \ref{Lem14} that \[\frac{h_{m+1}-(n-b^k)}{h_{m+1}-h_m}<\sqrt5-2<\frac{L_{m-2}+L_{m-4y}}{5F_{m-1}},\] which is \eqref{Eq12}.
\par 
We have proven \eqref{Eq12} when $y\geq 2$, so assume $y=1$. We assumed either $y>1$ or $b\neq 2$, so we must have $b\geq 3$. Since $\dfrac{b^2+b^{m+1-2y}-1}{(b^m-b)(b-1)}$ is decreasing in $m$ and $m\geq 4$, it follows from \eqref{Eq13} that \[\frac{h_{m+1}-(n-b^k)}{h_{m+1}-h_m}<\dfrac{b^2+b^{m+1-2y}-1}{(b^m-b)(b-1)}\leq\frac{b^2+b^{5-2y}-1}{(b^4-b)(b-1)}=\frac{b^2+b^3-1}{(b^4-b)(b-1)}.\] If $b=3$, then \[\frac{h_{m+1}-(n-b^k)}{h_{m+1}-h_m}<\frac{b^2+b^3-1}{(b^4-b)(b-1)}=\frac{35}{156}<\sqrt5-2,\] which proves \eqref{Eq12} with the help of Lemma \ref{Lem14}. If $b\geq 4$, then \[\frac{h_{m+1}-(n-b^k)}{h_{m+1}-h_m}<\frac{b^2+b^3-1}{(b^4-b)(b-1)}<\frac{b^3+b^2+b}{(b^4-b)(b-1)}=\frac{b^2+b+1}{(b^3-1)(b-1)}\] \[=\frac{1}{(b-1)^2}\leq\frac{1}{(4-1)^2}=\frac 19<\sqrt5-2,\] which proves \eqref{Eq12} once again.
\par 
\noindent 
Case 2: Here, assume $\mu(k,m-1,y)\neq V(k,m-1,y)$. Referring to Theorem \ref{Thm3}, we see that $\nu(k,m-1,y)=\gamma(k,m-1,x)+1$ and $\mu(k,m-1,y)=V(k,m-1,x)$ for some $x<y$. One finds from Definition \ref{Def6} that $\gamma(k,m-1,x)+1>b^k+\displaystyle{\sum_{i=0}^{x-1}b^{m-1-2i}}$. By Lemma \ref{Lem11}, $\gamma(k,m-1,x)+1\in I(k,m-1,x)$. In other words, if we let $y'$ be the smallest element of the set $\left\{0,1,\ldots,\left\lfloor\frac{m-2}{2}\right\rfloor\right\}$ such that $\gamma(k,m-1,x)+1\leq b^k+\displaystyle{\sum_{i=0}^{y'}b^{m-1-2i}}$, then $y'=x$. By definition, $\mu(k,m-1,x)$ is the maximum value of $s_b(j)$ as $j$ ranges over all elements of $I(k,m-1,x)$. Because $\gamma(k,m-1,x)+1\in I(k,m-1,x)$, this means that \[\mu(k,m-1,x)\geq s_b(\gamma(k,m-1,x)+1)=s_b(\nu(k,m-1,y))=\mu(k,m-1,y),\] where we have used Definition \ref{Def3} to deduce the last equality. Similarly, $\mu(k,m-1,y)$ is the maximum value of $s_b(j)$ as $j$ ranges over all elements of $I(k,m-1,y)$. Since $x<y$, $I(k,m-1,x)\subseteq I(k,m-1,y)$. Therefore, $\mu(k,m-1,x)\leq\mu(k,m-1,y)$. This shows that \[\mu(k,m-1,x)=\mu(k,m-1,y)=V(k,m-1,x).\]
\par 
Now, choose some integer $n'\in \left(b^k+\displaystyle{\sum_{i=0}^{x-1}b^{m-1-2i}},b^k+\displaystyle{\sum_{i=0}^xb^{m-1-2i}}\right]$ (note that $\gamma(k,m-1,x)+1$ is in this interval). Because $x$ is the smallest element of the set $\left\{0,1,\ldots,\left\lfloor\frac{m-2}{2}\right\rfloor\right\}$ satisfying $n'\leq b^k+\displaystyle{\sum_{i=0}^xb^{m-1-2i}}$ and $\mu(k,m-1,x)=V(k,m-1,x)$, it follows from Case 1 that $s_b(n')<J_{k,m}(n')$. We may set $n'=\gamma(k,m-1,x)+1$, so $s_b(\gamma(k,m-1,x)+1)<J_{k,m}(\gamma(k,m-1,x)+1)$. Using the fact that $\gamma(k,m-1,x)+1\leq b^k+\displaystyle{\sum_{i=0}^xb^{m-1-2i}}\leq b^k+\displaystyle{\sum_{i=0}^{y-1}b^{m-1-2i}}<n$, we have \[\mu(k,m-1,y)=s_b(\gamma(k,m-1,x)+1)<J_{k,m}(\gamma(k,m-1,x)+1)<J_{k,m}(n).\] This proves \eqref{Eq10} and completes the proof of the theorem. 
\end{proof} 
\begin{corollary} 
We have \[\limsup_{n\rightarrow\infty}\frac{s_b(n)}{n^{\log_b\phi}}=\frac{(b^2-1)^{\log_b\phi}}{\sqrt 5}.\]
\end{corollary}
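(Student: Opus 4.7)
The plan is to supply the missing upper bound
\[\limsup_{n\rightarrow\infty}\frac{s_b(n)}{n^{\log_b\phi}}\leq\frac{(b^2-1)^{\log_b\phi}}{\sqrt 5},\]
since Corollary \ref{Cor1} already gives the matching lower bound. The key insight is that within any block $b^k\leq n<b^{k+1}$, every ratio $s_b(n)/n^{\log_b\phi}$ is dominated by its value at the right endpoint $n=b^k+h_k=G_k(k)$, which by Theorem \ref{Thm1} equals $F_{k+2}/(b^k+h_k)^{\log_b\phi}$ and tends (by the very computation performed in the proof of Corollary \ref{Cor1}) to the claimed constant as $k\rightarrow\infty$.

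First I would split the block into three pieces. The boundary $n=b^k$ gives $s_b(n)=1$ by Lemma \ref{Lem1}, so it contributes nothing to the limsup. For $b^k+h_k\leq n<b^{k+1}$, Proposition \ref{Prop1} directly yields $s_b(n)\leq F_{k+2}$, and since $x\mapsto x^{\log_b\phi}$ is increasing this immediately gives $s_b(n)/n^{\log_b\phi}\leq F_{k+2}/(b^k+h_k)^{\log_b\phi}$. The main case is therefore $b^k<n<b^k+h_k$.

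For such $n$, I would choose $m\in\{2,3,\ldots,k-1\}$ with $G_k(m)\leq n\leq G_k(m+1)$. Theorem \ref{Thm4} combined with the endpoint equality $s_b(G_k(m))=H_k(m)=J_{k,m}(G_k(m))$ from Theorem \ref{Thm1} gives $s_b(n)\leq J_{k,m}(n)$. Lemma \ref{Lem17} then asserts that $x\mapsto J_{k,m}(x)/x^{\log_b\phi}$ is non-decreasing on $[b^k,\infty)$, whence
\[\frac{s_b(n)}{n^{\log_b\phi}}\leq\frac{J_{k,m}(G_k(m+1))}{G_k(m+1)^{\log_b\phi}}=\frac{H_k(m+1)}{G_k(m+1)^{\log_b\phi}}.\]
Reapplying Lemma \ref{Lem17} with $m+1,m+2,\ldots,k-1$ in place of $m$ and using the identity $J_{k,m'}(G_k(m'+1))=H_k(m'+1)=J_{k,m'+1}(G_k(m'+1))$ to glue the pieces, I obtain the telescope
\[\frac{H_k(m+1)}{G_k(m+1)^{\log_b\phi}}\leq\frac{H_k(m+2)}{G_k(m+2)^{\log_b\phi}}\leq\cdots\leq\frac{H_k(k)}{G_k(k)^{\log_b\phi}}=\frac{F_{k+2}}{(b^k+h_k)^{\log_b\phi}}.\]

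Pooling the three cases, every $n\in[b^k,b^{k+1})$ satisfies $s_b(n)/n^{\log_b\phi}\leq F_{k+2}/(b^k+h_k)^{\log_b\phi}$. Letting $k\rightarrow\infty$ and invoking the limit already computed in Corollary \ref{Cor1} gives the desired upper bound, which combined with Corollary \ref{Cor1} yields equality. I do not anticipate any real obstacle: all of the analytic and combinatorial effort has already been spent in Theorems \ref{Thm1} and \ref{Thm4} and in Lemma \ref{Lem17}, and the corollary merely stitches them together through iterated monotonicity.
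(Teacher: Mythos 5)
Your proposal is correct and follows essentially the same route as the paper: the paper likewise handles $n\geq b^k+h_k$ directly via Proposition \ref{Prop1}, and for $b^k<n<b^k+h_k$ it combines Theorem \ref{Thm4}, the endpoint identity $s_b(G_k(m))=H_k(m)$ from Theorem \ref{Thm1}, and the monotonicity of $J_{k,m}(x)/x^{\log_b\phi}$ from Lemma \ref{Lem17} to telescope up to $G_k(k)=b^k+h_k$, then invokes the limit already computed in Corollary \ref{Cor1}. No gaps.
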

\begin{proof} 
Corollary \ref{Cor1} states that \[\limsup_{n\rightarrow\infty}\frac{s_b(n)}{n^{\log_b\phi}}\geq\frac{(b^2-1)^{\log_b\phi}}{\sqrt 5},\] so we will now prove the reverse inequality. For each integer $k\geq 3$, let $u_k=G_k(k)=b^k+h_k$. Let $\theta(x)=\lceil\log_b(x)\rceil-1$ for each $x>0$. Recall that we showed in the proof of Corollary \ref{Cor1} that 
\begin{equation} \label{Eq19} 
\lim_{k\rightarrow\infty}\frac{s_b(u_k)}{u_k^{\log_b\phi}}=\frac{(b^2-1)^{\log_b\phi}}{\sqrt5}. 
\end{equation} We will show that 
\begin{equation} \label{Eq18} 
\frac{s_b(n)}{n^{\log_b\phi}}\leq\frac{s_b(u_{\theta(n)})}{(u_{\theta(n)})^{\log_b\phi}}\hspace{.2cm}\text{ for all }n>b^5.
\end{equation} It will then follow from \eqref{Eq19} and \eqref{Eq18} that \[\limsup_{n\rightarrow\infty}\frac{s_b(n)}{n^{\log_b\phi}}\leq\limsup_{n\rightarrow\infty}\frac{s_b(u_{\theta(n)})}{(u_{\theta(n)})^{\log_b\phi}}=\limsup_{k\rightarrow\infty}\frac{s_b(u_k)}{u_k^{\log_b\phi}}=\frac{(b^2-1)^{\log_b\phi}}{\sqrt5},\] which will complete the proof. In order to derive \eqref{Eq18}, let us choose some integer $n>b^5$. Let $\theta=\theta(n)$, and note that $b^{\theta}<n\leq b^{\theta+1}$. It follows from Proposition \ref{Prop1} that \[s_b(n)\leq F_{\theta+2}=s_b(b^{\theta}+h_\theta)=s_b(u_\theta)\] (if $n=b^{\theta+1}$, then Proposition \ref{Prop1} does not apply, but the inequality still holds because $s_b(n)=1<F_{\theta+2}$). If $n\geq u_\theta$, then this shows that \[\frac{s_b(n)}{n^{\log_b\phi}}\leq\frac{s_b(u_\theta)}{n^{\log_b\phi}}\leq\frac{s_b(u_\theta)}{u_\theta^{\log_b\phi}},\] which is the inequality we seek to prove. Therefore, we will assume $n<u_\theta$. 
\par 
For any $t\in\{2,3,\ldots,\theta-1\}$, we may use Lemma \ref{Lem17} to see that \[\Theta_{\theta,t}(G_\theta(t))=\Theta_{\theta,t}(b^\theta+h_t)\leq\Theta_{\theta,t}(b^\theta+h_{t+1})=\Theta_{\theta,t}(G_\theta(t+1)),\] where we have preserved the notation from that lemma. Furthermore, with the help of Definition \ref{Def8}, we find that \[\Theta_{\theta,t}(G_\theta(t+1))=\frac{J_{\theta,t}(G_\theta(t+1))}{G_\theta(t+1)^{\log_b\phi}}=\frac{H_\theta(t+1)}{G_\theta(t+1)^{\log_b\phi}}\] 
\begin{equation} \label{Eq21} 
=\frac{J_{\theta,t+1}(G_\theta(t+1))}{G_\theta(t+1)^{\log_b\phi}}=\Theta_{\theta,t+1}(G_\theta(t+1))
\end{equation} for all $t\in\{2,3,\ldots,\theta-1\}$. Therefore, \[\Theta_{\theta,t}(G_\theta(t))\leq\Theta_{\theta,t+1}(G_\theta(t+1))\] for all $t\in\{2,3,\ldots,\theta-1\}$. That is, 
\begin{equation} \label{Eq22}
\Theta_{\theta,2}(G_\theta(2))\leq\Theta_{\theta,3}(G_\theta(3))\leq\cdots\leq\Theta_{\theta,\theta}(G_\theta(\theta)).
\end{equation} Because $b^\theta<n<u_\theta=G_\theta(\theta)$, there exists some $m\in\{2,3,\ldots,\theta-1\}$ such that $G_\theta(m)\leq n<G_\theta(m+1)$. In other words, we may write $n=b^\theta+x$ for some $x\in\{h_m,h_m+1,\ldots,h_{m+1}-1\}$. By Lemma \ref{Lem17} and \eqref{Eq21}, \[\Theta_{\theta,m}(n)=\Theta_{\theta,m}(b^\theta+x)\leq\Theta_{\theta,m}(b^\theta+h_{m+1})\] \[=\Theta_{\theta,m}(G_\theta(m+1))=\Theta_{\theta,m+1}(G_\theta(m+1)).\] It follows from \eqref{Eq22} that $\Theta_{\theta,m+1}(G_\theta(m+1))\leq\Theta_{\theta,\theta}(G_\theta(\theta))$, so 
\begin{equation} \label{Eq23} 
\Theta_{\theta,m}(n)\leq\Theta_{\theta,\theta}(G_\theta(\theta)).
\end{equation} If $n=G_\theta(m)$, then it follows easily from Theorem \ref{Thm1} and Definition \ref{Def8}, that \[s_b(n)=s_b(G_\theta(m))=H_\theta(m)=J_{\theta,t}(G_\theta(m))=J_{\theta,m}(n).\] If $n\neq G_\theta(m)$, then $G_\theta(m)<n<G_{\theta}(m+1)$, so Theorem \ref{Thm4} shows that $s_b(n)<J_{\theta,m}(n)$. Either way, $s_b(n)\leq J_{\theta,m}(n)$. Hence, using \eqref{Eq23}, we obtain \[\frac{s_b(n)}{n^{\log_b\phi}}\leq\frac{J_{\theta,m}(n)}{n^{\log_b\phi}}=\Theta_{\theta,m}(n)\leq\Theta_{\theta,\theta}(G_\theta(\theta))\] \[=\frac{J_{\theta,\theta}(G_\theta(\theta))}{G_\theta(\theta)^{\log_b\phi}}=\frac{s_b(G_\theta(\theta))}{G_\theta(\theta)^{\log_b\phi}}=\frac{s_b(u_\theta)}{u_\theta^{\log_b\phi}},\] which proves \eqref{Eq18}. 
\end{proof}  
\begin{theorem} \label{Thm5}
For any real $x\geq 0$ and any integer $k\geq 3$, let \[\beta(x)=\log_b((b^2-1)x+1)\] and \[f_k(x)=\frac{1}{\sqrt5}\left[(k-\beta(x))(\phi^{\beta(x)}+\phi^{-\beta(x)})+\phi^{\beta(x)+2}+\phi^{-\beta(x)-2}\right].\] If $k$ and $n$ are positive integers such that $k\geq 3$ and $b^k<n\leq b^k+h_k$, then \[s_b(n)<f_k(n-b^k).\]
\end{theorem}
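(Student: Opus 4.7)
The plan is to combine the piecewise linear upper bound from Theorems \ref{Thm1} and \ref{Thm4} with an analytic majorization argument. Writing $x = n - b^k$, we have $1 \leq x \leq h_k$. Since the values $h_2 < h_3 < \cdots < h_k$ partition $[1, h_k]$, there is a unique $m \in \{2, 3, \ldots, k-1\}$ with $h_m \leq x < h_{m+1}$; the boundary case $x = h_k$ will be handled separately. By Theorem \ref{Thm1}, $s_b(G_k(m)) = H_k(m) = J_{k,m}(G_k(m))$, and by Theorem \ref{Thm4}, $s_b(n) < J_{k,m}(n)$ whenever $G_k(m) < n < G_k(m+1)$. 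It therefore suffices to prove
\[
J_{k,m}(b^k + x) \leq f_k(x) \quad \text{for all } h_m \leq x \leq h_{m+1},
\]
with strict inequality at $x = h_m$, together with the endpoint bound $F_{k+2} < f_k(h_k)$ (noting $s_b(b^k + h_k) = F_{k+2}$ by Proposition \ref{Prop1}).

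The first key step is to evaluate $f_k$ at the breakpoints $x = h_m$. Direct summation of the geometric series in Definition \ref{Def2} yields
\[
(b^2-1)h_m + 1 = \begin{cases} b^m, & m \text{ even}, \\ b^m + b(b-1), & m \text{ odd}, \end{cases}
\]
so $\beta(h_m) = m$ for even $m$ and $\beta(h_m) > m$ for odd $m$. In the even case, using the Binet identities $L_j = \phi^j + \phi^{-j}$ and $\sqrt{5}\,F_j = \phi^j - \phi^{-j}$ (for $j$ even), one computes
\[
f_k(h_m) - H_k(m) = \frac{2}{\sqrt{5}}\bigl[(k-m)\phi^{-m} + \phi^{-m-2}\bigr] > 0.
\]
In the odd case, a parallel computation with the odd-index Binet formulas $L_j = \phi^j - \phi^{-j}$ and $\sqrt{5}\,F_j = \phi^j + \phi^{-j}$, together with the fact that $\beta(h_m) > m$ inflates the positive-exponent pieces of $f_k$, again produces a manifestly positive difference. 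Specializing to $m = k$ gives $f_k(h_k) > F_{k+2}$, disposing of the case $x = h_k$.

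The second key step is to show that $f_k$ dominates the chord $J_{k,m}$ on each interval $[h_m, h_{m+1}]$. My approach is to prove that $f_k$ is concave as a function of $x$ on the relevant range. Writing $f_k(x) = F(\beta(x))$ with $F(\beta) = \frac{1}{\sqrt{5}}\bigl[(k-\beta)(\phi^\beta + \phi^{-\beta}) + \phi^{\beta+2} + \phi^{-\beta-2}\bigr]$ and using the identity $\beta''(x) = -(\beta'(x))^2 \ln b$, the chain rule gives
\[
f_k''(x) = (\beta'(x))^2\bigl[F''(\beta) - F'(\beta)\ln b\bigr].
\]
Setting $a = \ln\phi$ and grouping exponentials, one finds $\sqrt{5}\,[F''(\beta) - F'(\beta)\ln b] = e^{a\beta}A(\beta) + e^{-a\beta}B(\beta)$, where $A(\beta) = (\ln b - 2a) - a(\ln b - a)[(k-\beta) + \phi^2]$ is negative (since $\ln b > a$ and $k - \beta + \phi^2 > 0$), while $B(\beta) = (\ln b + 2a) + a(\ln b + a)[(k-\beta) + \phi^{-2}]$ is positive but carries the exponentially small weight $e^{-a\beta}$. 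Showing $|e^{a\beta}A(\beta)| \geq e^{-a\beta}B(\beta)$ on the relevant range $\beta \in [2, \beta(h_k)]$ (corresponding to $x \in [h_2, h_k]$) establishes concavity there; combined with $f_k(h_m) \geq H_k(m)$ and $f_k(h_{m+1}) \geq H_k(m+1)$ from Step 1, concavity forces $f_k(x) \geq J_{k,m}(b^k+x)$ on the whole chord, completing the proof.

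\textbf{Main obstacle.} The concavity of $f_k$ is the delicate technical step: $F(\beta)$ is \emph{not} concave in $\beta$ alone, so the argument must exploit the concavity of $\beta(x)$ in $x$ (captured by the $-F'(\beta)\ln b$ correction term) to absorb the positive curvature of $F$. Verifying the dominance $|e^{a\beta}A(\beta)| \geq e^{-a\beta}B(\beta)$ uniformly in $\beta$, in particular near $\beta \approx k$ where the factor $k - \beta$ is small, is the technical heart of the argument and requires careful case analysis in $b$. If a clean global concavity estimate proves elusive, a fallback is an interval-by-interval verification of $f_k(x) \geq J_{k,m}(b^k+x)$ on each $[h_m, h_{m+1}]$, leveraging the monotonicity of $\Theta_{k,m}$ from Lemma \ref{Lem17} together with the explicit endpoint values obtained in Step 1.
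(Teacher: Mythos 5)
Your proposal follows essentially the same route as the paper's own proof: evaluate $f_k$ at the breakpoints $h_m$ via $(b^2-1)h_m+1 = b^m$ (even $m$) or $b^m+b(b-1)$ (odd $m$) to get $f_k(h_m)>H_k(m)$, establish concavity of $f_k$ in $x$ through the chain-rule identity $f_k''(x)=(\beta'(x))^2\bigl[F''(\beta)-F'(\beta)\ln b\bigr]$, and conclude by chord domination together with Theorems \ref{Thm1} and \ref{Thm4}. The dominance inequality you defer (absorbing the $e^{-a\beta}B(\beta)$ term, and in particular the sign of $A(\beta)$, which for $b\geq 3$ needs more than $\ln b>\ln\phi$ since $\ln b-2\ln\phi>0$ there) is precisely what the paper settles with explicit numerical constants $A_1>0.49$ and $A_2<2.19$, so your plan is sound and matches the published argument.
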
 
\begin{proof} 
Let $k\geq 3$ be an integer. Let $\alpha=\dfrac{b^k+b^2-b-1}{b^2-1}$, and note that $h_k\leq\alpha$. We will show that $f_k$ is increasing and concave down on the open interval $(1,\alpha)$. The desired inequality will then follow quite easily. Observe that we may write $f_k(x)=g_k(\beta(x))$, where \[g_k(x)=\frac{1}{\sqrt5}\left[(k-x)(\phi^x+\phi^{-x})+\phi^{x+2}+\phi^{-x-2}\right].\] If $x\leq\log_b(b^k+b^2-b)$, then \[k-x\geq k-\log_b(b^k+b^2-b)=-\log_b\left(1+\frac{b-1}{b^{k-1}}\right)\geq-\frac{b-1}{b^{k-1}\log b}\] \[\geq-\frac{b-1}{b^2\log b}\geq-\frac{1}{4\log2},\] where we have used the inequality $\log_b(1+u)\leq\dfrac{u}{\log b}$ that holds for all $u>-1$ (as well as the inequalities $k\geq 3$ and $b\geq 2$). Therefore, if $2\leq x\leq\log_b(b^k+b^2-b)$, then \[g_k'(x)=\frac{1}{\sqrt5}\left[(k-x)(\phi^x-\phi^{-x})\log\phi-(\phi^x+\phi^{-x})+(\phi^{x+2}-\phi^{-x-2})\log\phi\right]\] \[\geq\frac{1}{\sqrt5}\left[-\frac{1}{4\log 2}(\phi^x-\phi^{-x})\log\phi-(\phi^x+\phi^{-x})+(\phi^{x+2}-\phi^{-x-2})\log\phi\right]\] \[=\frac{1}{\sqrt5}\left[\phi^x\left(\phi^2\log\phi-\frac{\log\phi}{4\log2}-1\right)+\phi^{-x}\left(-\phi^{-2}\log\phi+\frac{\log\phi}{4\log2}-1\right)\right]\] \[=\frac{1}{\sqrt5}\left[C_1\phi^x+C_2\phi^{-x}\right],\] where \[C_1=\phi^2\log\phi-\frac{\log\phi}{4\log2}-1\approx 0.086\ldots\] and \[C_2=-\phi^{-2}\log\phi+\frac{\log\phi}{4\log2}-1\approx-1.010\ldots.\] If $x\geq2$, then $C_1\phi^x\geq C_1\phi^2$ and $C_2\phi^{-x}\geq C_2\phi^{-2}$. This means that \[g_k'(x)\geq\frac{1}{\sqrt5}\left[C_1\phi^x+C_2\phi^{-x}\right]\geq C_1\phi^2+C_2\phi^{-2}>0\] whenever $2\leq x\leq\log_b(b^k+b^2-b)$. Now, if $1\leq x\leq \alpha$, then $2\leq\beta(x)\leq\log_b(b^k+b^2-b)$ and $\beta'(x)>0$. Consequently, \[f_k'(x)=\frac{d}{dx}g_k(\beta(x))=g_k'(\beta(x))\beta'(x)>0\] for all $x\in[1,\alpha]$. This shows that $f_k$ is increasing on the open interval $(1,\alpha)$.
\par 
We now wish to show that $f_k$ is concave down on the interval $(1,\alpha)$. We first calculate \[\beta'(x)=\frac{1}{\log b}\frac{b^2-1}{(b^2-1)x+1}\] and \[\beta''(x)=-\frac{1}{\log b}\left(\frac{b^2-1}{(b^2-1)x+1}\right)^2=-(\beta'(x))^2\log  b\] to obtain \[f_k''(x)=\frac{d^2}{d^2x}g_k(\beta(x))=\frac{d}{dx}(g_k'(\beta(x))\beta'(x))\] \[=g_k'(\beta(x))\beta''(x)+(\beta'(x))^2g_k''(\beta(x))\] \[=(\beta'(x))^2(g_k''(\beta(x))-g_k'(\beta(x))\log b).\] Therefore, to show that $f_k$ is concave down on the interval $(1,\alpha)$, we just need to show that $g_k'(\beta(x))\log b>g_k''(\beta(x))$ for all $x\in(1,\alpha)$. To do so, it suffices to show that 
\begin{equation} \label{Eq14}
\frac{\sqrt5\log b}{\log\phi}g_k'(x)>\frac{\sqrt5}{\log\phi}g_k''(x)
\end{equation} for all $x\in(2,\log_b(b^k+b^2-b))$. 
\par 
Suppose $x\in(2,\log_b(b^k+b^2-b))$. We have \[-(\phi^x+\phi^{-x})+(\phi^{x+2}-\phi^{-x-2})\log\phi=\phi^x(\phi^2\log\phi-1)-\phi^{-x}(\phi^{-2}\log\phi+1)\] \[\geq\phi^2(\phi^2\log\phi-1)-\phi^{-x}(\phi^{-2}\log\phi+1)\] \[\geq\phi^2(\phi^2\log\phi-1)-\phi^{-2}(\phi^{-2}\log\phi+1)>0,\] so \[g_k'(x)=\frac{1}{\sqrt5}\left[(k-x)(\phi^x-\phi^{-x})\log\phi-(\phi^x+\phi^{-x})+(\phi^{x+2}-\phi^{-x-2})\log\phi\right]\] \[>\frac{1}{\sqrt5}\left[(k-x)(\phi^x-\phi^{-x})\log\phi\right].\] In other words,
\begin{equation} \label{Eq15}  
\frac{\sqrt5\log b}{\log\phi}g_k'(x)>(k-x)(\phi^x-\phi^{-x})\log b.
\end{equation} Using the inequality $\log_b(1+u)\leq\dfrac{u}{\log b}$, which holds for all $u>-1$, we find that \[k-x>k-\log_b(b^k+b^2-b)=-\log_b\left(1+\frac{b-1}{b^{k-1}}\right)\geq-\frac{b-1}{b^{k-1}\log b},\] so it follows from the assumption that $k\geq 3$ that 
\begin{equation} \label{Eq16} 
k-x>-\frac{b-1}{b^2\log b}.  
\end{equation} 
In addition, since $x>2$, 
\begin{equation} \label{Eq17} 
\phi^x(\log b-\log\phi)-\phi^{-x}(\log b+\log\phi)>\phi^2(\log b-\log\phi)-\phi^{-2}(\log b+\log\phi)>0.
\end{equation} 
Combining \eqref{Eq15}, \eqref{Eq16}, and \eqref{Eq17} with the fact that
\[\frac{\sqrt5}{\log\phi}g_k''(x)=\log\phi\left[(k-x)(\phi^x+\phi^{-x})-2\frac{\phi^x-\phi^{-x}}{\log\phi}+(\phi^{x+2}+\phi^{-x-2})\right]\] yields \[\frac{\sqrt5\log b}{\log\phi}g_k'(x)-\frac{\sqrt5}{\log\phi}g_k''(x)>(k-x)(\phi^x-\phi^{-x})\log b-(k-x)(\phi^x+\phi^{-x})\log\phi\] \[+2(\phi^x-\phi^{-x})-(\phi^{x+2}+\phi^{-x-2})\log\phi\] \[=(k-x)(\phi^x(\log b-\log\phi)-\phi^{-x}(\log b+\log\phi))\] \[+\phi^x(2-\phi^2\log\phi)-\phi^{-x}(2+\phi^{-2}\log\phi)\] \[>-\frac{b-1}{b^2\log b}(\phi^x(\log b-\log\phi)-\phi^{-x}(\log b+\log\phi))\] \[+\phi^x(2-\phi^2\log\phi)-\phi^{-x}(2+\phi^{-2}\log\phi)\] \[=A_1\phi^x-A_2\phi^{-x},\] where \[A_1=2-\phi^2\log\phi-\frac{b-1}{b^2\log b}(\log b-\log\phi)\] and \[A_2=2+\phi^{-2}\log\phi-\frac{b-1}{b^2\log b}(\log b+\log\phi).\] Because $b\geq 2$, it is easy to see that \[A_1>2-\phi^2\log\phi-\frac{b-1}{b^2}\geq 2-\phi^2\log\phi-\frac14>0.49\] and \[A_2<2+\phi^{-2}\log\phi<2.19.\] Since $x>2$, \[\frac{\sqrt5\log b}{\log\phi}g_k'(x)-\frac{\sqrt5}{\log\phi}g_k''(x)>A_1\phi^x-A_2\phi^{-x}>0.49\phi^x-2.19\phi^{-x}\] \[>0.49\phi^2-2.19\phi^{-2}>0,\] which proves \eqref{Eq14}. 
\par 
We have shown that $f_k$ is increasing and concave down on the interval $(1,\alpha)$. Now, suppose $m$ is odd and $3\leq m\leq k$. Because $m$ is odd, one may easily show that $h_m=1+\displaystyle{\sum_{i=0}^{\left\lfloor\frac{m-3}{2}\right\rfloor}b^{m-2-2i}}=\dfrac{b^m+b^2-b-1}{b^2-1}$. Since \[G_k(m)-b^k=h_m=\frac{b^m+b^2-b-1}{b^2-1}>\frac{b^m-1}{b^2-1},\] we may use the fact that $f_k$ is increasing on $(1,\alpha)$ to see that \[f_k(G_k(m)-b^k)>f_k\left(\frac{b^m-1}{b^2-1}\right)=g_k\left(\beta\left(\frac{b^m-1}{b^2-1}\right)\right)=g_k(m)\] \[=\frac{1}{\sqrt5}\left[(k-m)(\phi^m+\phi^{-m})+\phi^{m+2}+\phi^{-m-2}\right]\] \[=(k-m)\frac{\phi^m-(-1/\phi)^m}{\sqrt5}+\frac{\phi^{m+2}-(-1/\phi)^{m+2}}{\sqrt5}\] \[=(k-m)F_m+F_{m+2}=H_k(m).\] Similarly, if $m$ is even and $2\leq m\leq k$, then \[f_k(G_k(m)-b^k)=f_k(h_m)=f_k\left(\frac{b^m-1}{b^2-1}\right)=g_k\left(\beta\left(\frac{b^m-1}{b^2-1}\right)\right)=g_k(m)\] \[=\frac{1}{\sqrt5}\left[(k-m)(\phi^m+\phi^{-m})+\phi^{m+2}+\phi^{-m-2}\right]\] \[>\frac{1}{\sqrt5}\left[(k-m)(\phi^m-\phi^{-m})+\phi^{m+2}-\phi^{-m-2}\right]\] \[=(k-m)\frac{\phi^m-(-1/\phi)^m}{\sqrt5}+\frac{\phi^{m+2}-(-1/\phi)^{m+2}}{\sqrt5}\] \[=(k-m)F_m+F_{m+2}=H_k(m).\] Hence, $f_k(G_k(m)-b^k)>H_k(m)$ for all $m\in\{2,3,\ldots,k\}$. We may now prove that $s_b(n)<f_k(n-b^k)$ for all $n\in\{b^k+1,b^k+2,\ldots,b^k+h_k\}$. Choose such an integer $n$. If $n=G_k(m)$ for some $m\in\{2,3,\ldots,k\}$, then it follows from Theorem \ref{Thm1} and the preceding discussion that \[s_b(n)=H_k(m)<f_k(n-b^k).\] Therefore, we will assume $n\neq G_k(m)$ for all $m\in\{2,3,\ldots,k\}$. Note that there exists some $m\in\{2,3,\ldots,k-1\}$ such that $G_k(m)<n<G_k(m+1)$. Let $\mathcal C$ be the curve $\{(x,f_k(x-b^k))\colon G_k(m)\leq x\leq G_k(m+1)\}$, and let $\mathcal L=\{J_{k,m}(x)\colon G_k(m)\leq x\leq G_k(m+1)\}$ be the line segment connecting the points $(G_k(m),H_k(m))$ and $(G_k(m+1),H_k(m+1))$. Because $f_k(x)$ is concave down on the interval $(1,\alpha)$, the curve $\mathcal C$ is concave down. Since $f_k(G_k(m)-b^k)>H_k(m)$ and $f_k(G_k(m+1)-b^k)>H_k(m+1)$, the curve $\mathcal C$ must lie above the line segment $\mathcal L$. In particular, $J_{k,m}(n)<f_k(n-b^k)$. By Theorem \ref{Thm4}, \[s_b(n)<J_{k,m}(n)<f_k(n-b^k),\] as desired. 
\end{proof} 
\begin{figure} 
\includegraphics[height=70mm]{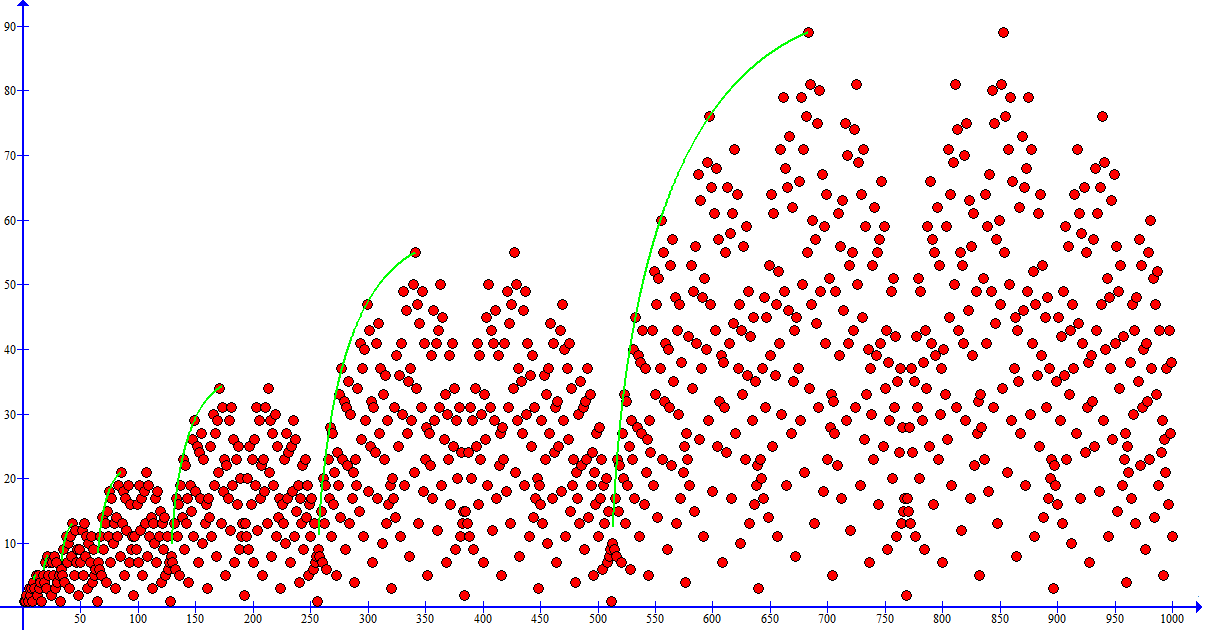} \\
\\ \\
\includegraphics[height=70mm]{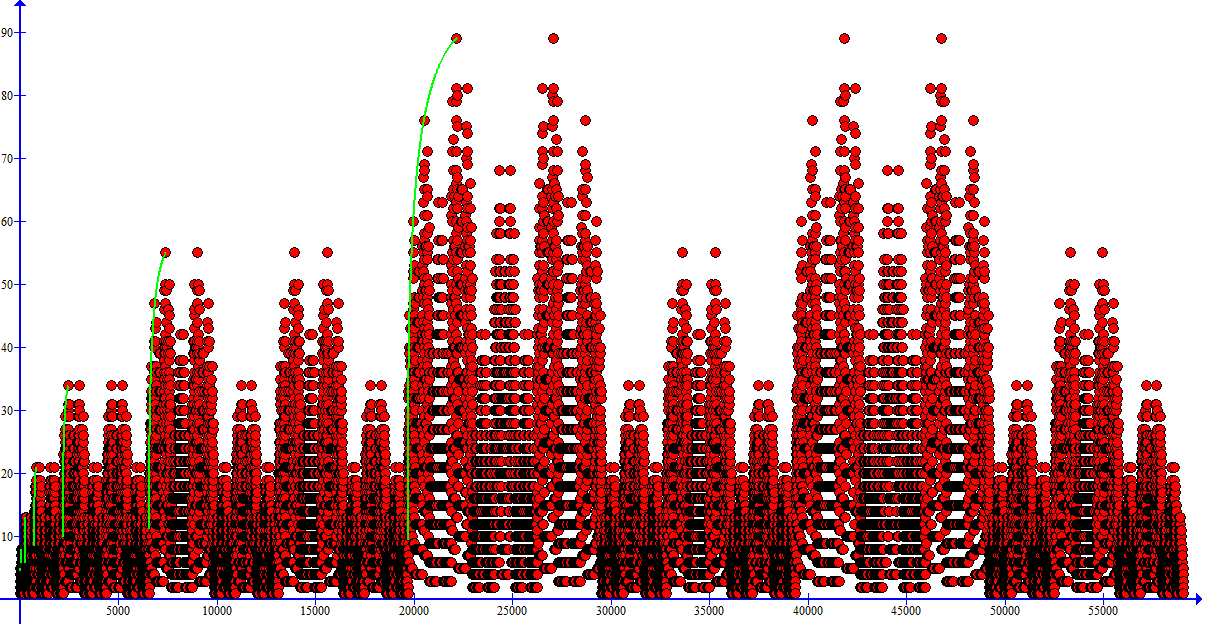}
\caption{Two plots of $s_b(n)$ for $1\leq n\leq b^{10}$. The top plot uses the value $b=2$, while the bottom uses $b=3$. For each $k\in\{3,4,\ldots,9\}$, the graph of $f_k(x-b^k)$ for $b^k\leq x\leq b^k+h_k$ is shown in green.} \label{Fig2}
\end{figure} 
\begin{figure} 
\includegraphics[height=70mm]{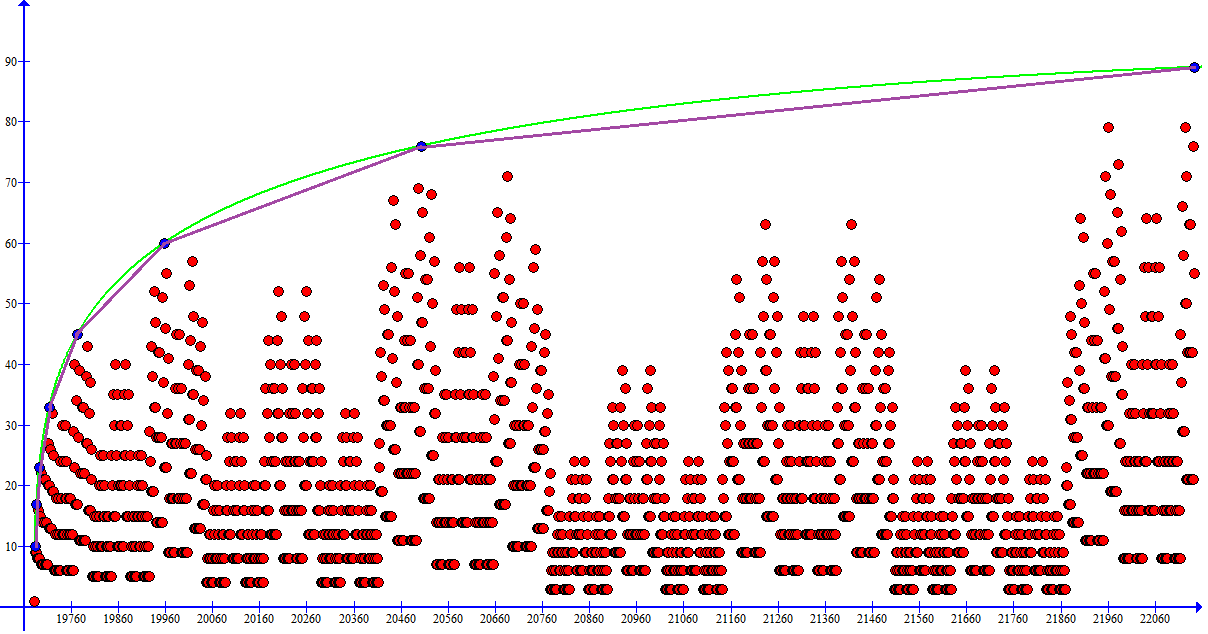}
\caption{A plot of $s_3(n)$ for $3^9\leq n\leq 3^9+h_9$ (where $h_9$ is defined with $b=3$). The graph of $f_k(x-3^9)$ is colored green. The upper bound from Theorem \ref{Thm4} is the polygonal path colored purple. The points $(G_9(m),H_9(m))$ for $m\in\{2,3,\ldots,9\}$ are colored blue.} \label{Fig3} 
\end{figure} 
\section{Concluding Remarks} 
We wish to acknowledge some of the potential uses and extensions of results derived in this paper. First, we note that Theorem \ref{Thm2} allows us to derive explicit formulas for the values of $s_b(n)$ for integers $n$ whose ordinary base $b$ expansions have certain forms. For example, we were able to invoke Theorem \ref{Thm2} in the proof of Theorem \ref{Thm1} in order to show that $s_b(G_k(m))=H_k(m)$. As another example, it is possible to use Theorem \ref{Thm2} to show that 
\begin{equation} \label{Eq30}
s_b(1+b^{x_1}+b^{x_1+x_2}+b^{x_1+x_2+x_3})=x_1x_2x_3+x_1x_2+x_1x_3+x_2x_3+x_2
-1
\end{equation} for any positive integers $x_1,x_2,x_3$. The equation \eqref{Eq30} appears with several similar identities (many of which can be deduced from Theorem \ref{Thm2}) in \cite{Carlitz64}. 
\par 
Second, arguments based on symmetry and periodicity may be used to extend the upper bounds given by Theorems \ref{Thm3}, \ref{Thm4}, and \ref{Thm5}. For example, referring to the top image in Figure \ref{Fig2}, one will see that the plot of $s_2(n)$ forms several ``mound" shapes. However, only the left sides of the mounds are bounded above by the green curves. It is known \cite[page 2]{Northshield10} that if $k\in\mathbb N$, then  
\begin{equation} \label{Eq24}
s_2(2^k+x)=s_2(2^{k+1}-x)\text{ for all }x\in\{1,2,\ldots,2^k\}.
\end{equation} This allows us to obtain upper bounds over the right sides of the mounds for free. More precisely, since we know from Theorem \ref{Thm5} that $s_2(2^k+x)<f_k(x)$ for all $x\in\{1,2,\ldots,h_k\}$, it follows from \eqref{Eq24} that $s_2(2^{k+1}-x)<f_k(x)$ for all such $x$. Using identities similar to \eqref{Eq24} for arbitrary values of $b$, one may extend our upper bounds for $s_b(n)$ to a larger range of values of $n$.   

\end{document}